\def\al{\alpha}
\def\de{\delta}
\def\ga{\gamma}
\def\ka{\kappa}
\def\la{\lambda}
\def\Om{\Omega}
\def\om{\omega}
\def\pa{\partial}
\theoremstyle{plain}
\newtheorem{proposition}{Proposition}[section]
\newtheorem{lemma}[proposition]{Lemma}
\newtheorem{theorem}[proposition]{Theorem}
\newtheorem{corollary}[proposition]{Corollary}
\theoremstyle{definition}
\newtheorem{definition}[proposition]{Definition}
\newtheorem{remark}[proposition]{Remark}
\title{Supersonic Gravitational Collapse for Non-Isentropic Gaseous Stars}
\author{Christopher Alexander\footnote{Department of Mathematics, University College London, WC1H 0AY, United Kingdom, christopher.alexander@ucl.ac.uk},~ Mahir Had\v{z}i\'{c}\footnote{Department of Mathematics, University College London, WC1H 0AY, United Kingdom, m.hadzic@ucl.ac.uk}~ ~and~ Matthew Schrecker\footnote{Department of Mathematics, University of Bath, BA2 7AY, United Kingdom, mris21@bath.ac.uk}}
\date{}
\begin{document}

\maketitle

\begin{abstract}
	We show the existence of a new class of initially smooth spherically symmetric self-similar solutions to the non-isentropic Euler--Poisson system. These solutions exhibit supersonic gravitational implosion in the sense that the density blows-up in finite time while the fluid velocity remains supersonic. In particular, they occupy a portion of the phase space that is far from the recently constructed isentropic self-similar implosion. At the heart of our proof is the presence of a two-parameter scaling invariance and the reduction of the problem to a non-autonomous system of ordinary differential equations. We use the requirement of smoothness of the flow as a selection principle that constrains the choice of scaling indices. An important consequence of our analysis is that for all the solutions we construct, the polytropic index $\ga$ is strictly bigger than $\frac{4}{3}$, which is in sharp contrast to the known results in the isentropic case.
\end{abstract}

\tableofcontents

\section{Introduction}

A self-gravitating Newtonian star is modelled using the non-isentropic Euler--Poisson system (or full Euler-Poisson system), which describes the conservation of mass, momentum and energy in the presence of non-constant entropy. The basic unknowns are the density $\varrho$, the pressure $P$, the velocity vector $\boldsymbol{u}$, the specific internal energy $e$ and the gravitational potential $\varphi$. The resulting system of partial differential equations read:
\begin{align}
	\pa_t\varrho + \nabla\cdot(\varrho\boldsymbol{u}) &= 0,\label{E:Con}\\
	\varrho (\pa_t + \boldsymbol{u}\cdot\nabla) \boldsymbol{u} + \nabla P + \varrho\nabla\varphi &= 0,\\
	\pa_t\bigg(\varrho\bigg(e+\frac{1}{2} |\boldsymbol{u}|^2\bigg)\bigg)+\nabla\cdot\bigg(\varrho \boldsymbol{u}\bigg(e+\frac{1}{2} |\boldsymbol{u}|^2+\frac{P}{\varrho}\bigg)\bigg) &= 0,\label{E:Eng}\\
	\Delta\varphi &= 4\pi \varrho.\label{E:Poi}
\end{align}
The system~\eqref{E:Con}--\eqref{E:Poi} is referred to as the \emph{non-isentropic gravitational Euler--Poisson system}. Under the assumption of an ideal gas, we introduce the equation of state
\begin{align}
	P = (\ga-1)\varrho e,
\end{align}
where the adiabatic exponent $\ga>1$. Then, for smooth solutions of~\eqref{E:Con}--\eqref{E:Eng}, the specific entropy is transported along the flow, giving us the transport equation
\begin{align}
	(\pa_t + \boldsymbol{u}\cdot\nabla)\bigg(\frac{P}{\varrho^\ga}\bigg) = 0.\label{E:Ent}
\end{align}
As we will work with smooth initial data for~\eqref{E:Con}--\eqref{E:Poi}, we will work for convenience with~\eqref{E:Ent} throughout, rather than the more cumbersome (but equivalent)~\eqref{E:Eng}. Unlike the isentropic case, where the equation of state $P=\varrho^\ga$ is fixed, we allow for the presence of non-trivial entropy, which is encoded through the \emph{dynamic} equation~\eqref{E:Ent}.

One of the central questions in the description of qualitative dynamics of self-gravitating fluids is the problem of gravitational collapse. We are interested in dynamically forming singularities characterised by the blow up of the fluid density and refer to such a process as star implosion. This question has been extensively studied in the Newtonian context in the astrophysics literature, beginning with the pioneering and independent papers of Larson and Penston~\cite{Larson69,Penston69} which identified numerically an exact self-similar collapsing solution for isothermal gas dynamics, later termed the Larson--Penston (LP) solution. A further discrete family of possible collapse profiles was found numerically by Hunter~\cite{Hunter77}, but later partial analyses of numerical mode stability in~\cite{HanawaNakayama97,MaedaHarada01} suggested that the LP solution should be the only stable solution among the LP and Hunter solutions. In the non-isentropic case, there has been much less attention, especially in the spherically symmetric case. A family of collapsing solutions with variable entropy has been numerically constructed by Lou--Shi~\cite{LouShi14} for the range of adiabatic exponent $\ga>\frac{4}{3}$ (termed general polytropic Larson--Penston solutions by the authors). 

Stellar implosion has been given a lot of attention in mathematics literature recently, but only in the isentropic context mentioned above. This includes the construction of the LP solutions~\cite{GuoHadzicJang21B} for the isothermal case $\ga=1$, the existence of self-similar imploding stars in the full supercritical regime $1<\ga<\frac{4}{3}$ in~\cite{GuoHadzicJangSchrecker22}, as well as examples of dust-like collapse in the supercritical regime~\cite{GuoHadzicJang21A}. Moreover, in the case of the Einstein--Euler system, the relativistic analogues of the Larson--Penston solution were constructed in~\cite{GuoHadzicJang23}, giving rise to spacetimes dynamically forming naked singularities from smooth initial data. We also mention~\cite{Sandine24}, in which a subclass of the above mentioned Hunter solutions has been constructed.

The purpose of this work is to rigorously construct examples of self-similar imploding singularities for the non-isentropic Euler--Poisson system~\eqref{E:Con}--\eqref{E:Poi}. It is well-known~\cite{MerleRaphaelRodnianskiSzeftel22A,GuoHadzicJang21B,GuoHadzicJangSchrecker22,GuoHadzicJang23,Hadzic23,Sandine24} that one of the most difficult challenges in the construction of such solutions is the presence of so-called sonic hypersurfaces, which correspond to the boundary of the backward sound cone emanating from the singularity at $(t,r)=(0,0)$. Such surfaces divide the space-time into the sub- and super-sonic regions and lead to a number of difficulties in the analysis.

A simple calculation in self-similar coordinates shows that a point $(t,r)$ lies on the sonic hypersurface precisely when the so-called relative velocity (defined below in~\eqref{E:omegaDef}) is equal to the sound speed of the fluid (see Definition~\ref{D:Sonic} below). This motivates the distinction of subsonic and supersonic flow depending on whether the relative velocity is greater or less than the sound speed. For the flows we construct here, we will in fact obtain solutions that are globally supersonic, except for a single sonic point at the origin. This sonic point is due to the vanishing of both sound speed and velocity at the origin, a feature of non-isentropic self-similar flows that is also seen, for example, in the G\"uderley imploding shock solution (briefly discussed below). As a consequence of this vanishing, the backwards acoustic cone will be identical to the line $\{r=0\}$.

\begin{theorem}[Main Theorem: Informal statement]\label{T:MainInformal}
	Let $\ga\in\big(\frac{19}{12},\frac{11}{6}\big)$. Then there exists a smooth, spherically symmetric, supersonic solution to~\eqref{E:Con}--\eqref{E:Poi} on $(t,x)\in(-\infty,0)\times \mathbb{R}^3$ such that, as $t\to0^-$, $\varrho(t,0)\to\infty$.
\end{theorem}

As we explain below, the presence of non-constant entropy allows the choice of the polytropic exponent $\ga>\frac{4}{3}$ to be compatible with supercriticality. The particular range $\ga\in\big(\frac{19}{12},\frac{11}{6}\big)$ included in this theorem constitutes the first band of the adiabatic indices $\ga$ for which such solutions should be expected with the basic physical property that the density is maximised at the origin (see Remark~\ref{R:gammaBands} below).

\begin{remark}
	We note that the above theorem covers the value $\ga=\frac{5}{3}$, which corresponds to the adiabatic index of atomic hydrogen gas and is thus important for astrophysical considerations (see~\cite{LouShi14}).
\end{remark}

In the fluid dynamics and astrophysics literature, self-similar solutions have been extensively studied due to their universality and the expectation that they form attractors for the dynamics of fluid flows. Such self-similar flows have typically been distinguished as of one of two types~\cite{ZeldovichRazier67}: Type I if the self-similar scaling parameters are determined from dimensional analysis and Type II otherwise.\footnote{Note that this differs from some other notions of Type I and Type II blow-up used in the PDE literature.} Our solutions are of the second kind, as the scaling is not fixed by dimensional considerations. Instead, the scaling is determined via a regularity criterion: the solution must be smooth, in particular at the sonic point. The use of regularity to identify the physically meaningful scaling solution for Type II self-similar flows has a long history. It is a key feature of the G\"uderley imploding shock solution for the non-isentropic Euler equations, as well as the recent construction of smooth imploding solutions for isentropic gas dynamics~\cite{MerleRaphaelRodnianskiSzeftel22A,MerleRaphaelRodnianskiSzeftel22B} and self-gravitating implosion~\cite{GuoHadzicJang21B,GuoHadzicJangSchrecker22,GuoHadzicJang23}. The imploding shock solution for the non-isentropic Euler equations was discovered by G\"uderley~\cite{Guderley42} in 1942 and has been studied extensively by mathematicians, physicists and engineers. These solutions were rigorously constructed in~\cite{JangLiuSchrecker25A,JangLiuSchrecker25B}. For further numerical and other mathematical results concerning the G\"uderley solutions, as well as an overview of the literature, see~\cite{JangLiuSchrecker25A,JenssenTsikkou18,Lazarus81} and the references therein. In both the shock implosion and the smooth implosion results, the parameter is found a posteriori through a shooting argument which does not yield an exact form, whereas in this paper, the scaling may still be fixed a priori, as a simple analysis at the origin demonstrates which value of scaling parameter gives smooth solutions.

\subsection{Scaling Symmetries}

For the remainder of this paper we work in spherical symmetry. We therefore assume that:
\begin{align*}
	\varrho(t,\boldsymbol{x}) &= \tilde{\rho}(t,r), & \boldsymbol{u}(t,\boldsymbol{x}) &= \tilde{u}(t,r) \frac{\boldsymbol{x}}{r}, & P(t,\boldsymbol{x}) &= \tilde{p}(t,r).
\end{align*}
The resulting system then reads:
\begin{align}
	\pa_t\tilde{\rho} + \bigg(\pa_r + \frac{2}{r}\bigg)(\tilde{\rho}\tilde{u}) &= 0,\label{E:SSCon}\\
	(\pa_t + \tilde{u}\pa_r)\tilde{u} + \frac{\pa_r \tilde p}{\tilde \rho} + \frac{M[\tilde{\rho}](r)}{r^2} &= 0,\label{E:SSMom}\\
	(\pa_t + \tilde{u}\pa_r)\bigg(\frac{\tilde{p}}{\tilde{\rho}^\ga}\bigg) &= 0,\label{E:SSEnt}
\end{align}
where we have introduced the local mass
\begin{align*}
	M[\tilde{\rho}](r) := \int^r_0 4\pi z^2\tilde{\rho}\, dz.
\end{align*}
Observe that in spherical symmetry we can easily integrate the Poisson equation~\eqref{E:Poi} over a ball of radius $r$ to conclude that
\begin{align}
	\nabla\varphi = \frac{M[\tilde{\rho}](r)}{r^2}\frac{\boldsymbol{x}}{r}\label{E:NablaPhi}
\end{align}
and therefore the formulation~\eqref{E:SSCon}--\eqref{E:SSEnt} is indeed the spherically symmetric version of the original problem.

Unlike the isentropic Euler--Poisson system, the non-isentropic version has a two-parameter family of scaling invariances. This is, of course, expected due to the presence of an additional dynamic equation~\eqref{E:SSEnt} which encodes the transport of entropy.

Assume that the triple $(\tilde{\rho},\tilde{u},\tilde{p})$ solves the Euler--Poisson system~\eqref{E:SSCon}--\eqref{E:SSEnt}. It is a matter of direct calculation to then check that for any choice of $\al\in\mathbb{R}$ the triple $(\rho, u, p)$ given by:
\begin{align}
	\tilde{\rho} &= \la^{a_1(\al)}\rho(s,y), &
	\tilde{u} &= \la^{a_2(\al)}u(s,y), &
	\tilde{p} &= \la^{a_3(\al)} p(s,y),\label{E:Scaling}
\end{align}
also solves~\eqref{E:SSCon}--\eqref{E:SSEnt}, where:
\begin{align}
	s &= \frac{t}{\la^{b(\al)}}, &
	y &= \frac{r}{\la}\label{E:syDef}
\end{align}
and:
\begin{align}
	a_1(\al) &= -\frac{2-\al}{2-\ga}, &
	a_2(\al) &= -\frac{2(\ga-1)-\al}{2(2-\ga)}, &
	a_3(\al) &= -\frac{2(\ga-\al)}{2-\ga}, &
	b(\al) &= \frac{2-\al}{2(2-\ga)}.\label{E:SSab}
\end{align}
Under this scaling, the specific entropy is scaled as
\begin{align*}
	\frac{\tilde{p}}{\tilde{\rho}^\ga} = \la^\al\frac{p}{\rho^\ga}(s,y).
\end{align*}
We observe that the total mass scales according to the law
\begin{align*}
	M[\tilde{\rho}](r) = \la^{a_1(\al)+3}M[\rho](y) = \la^{\frac{4-3\ga+\al}{2-\ga}}M[\rho](y),
\end{align*}
which allows us to introduce a natural notion of mass supercriticality in the problem. We say that the scaling~\eqref{E:Scaling}--\eqref{E:SSab} is \emph{mass supercritical} if
\begin{align}
	1 \le \ga < \frac{4+\al}{3}.\label{E:SuperCrit}
\end{align}
Therefore, when $\al>0$ supercriticality is compatible with a range of choices of the polytropic exponent $\ga$ strictly larger than $\frac{4}{3}$, that is, the regime where $\frac{4}{3}<\ga<\frac{4+\al}{3}$. This should be contrasted to the isentropic case, where supercriticality corresponds to the range~\eqref{E:SuperCrit} with $\al=0$ and therefore $\ga<\frac{4}{3}$.

\begin{remark}[The mass-critical case]\label{R:MassCritical}
	If $\al=3\ga-4$, we refer to such a choice of the pair $(\al,\ga)$ as \emph{mass-critical}. It is well-known that in the isentropic mass-critical case $(\al,\ga)=\big(0,\frac{4}{3}\big)$, there exists a finite-parameter family of collapsing and expanding compactly supported solutions discovered by Goldreich and Weber in 1980~\cite{GoldreichWeber80}. For the non-isentropic mass-critical choices $(\al,\ga)=(3\ga-4,\ga)$, the analogues of such solutions were discovered by Deng, Xiang and Yang~\cite{DengXiangYang03}. We also refer the reader to~\cite{Makino92,FuLin98,LiLouEsimbek18}. The nonlinear stability of the expanding Goldreich--Weber stars was shown in~\cite{HadzicJang18,HadzicJangLam22}.
\end{remark}

Observe that there are certain natural constraints on the choice of $\al$. To maintain an entropy bounded from above in the limit $\la\to0$, and to exhibit pressure and density blow up, we demand that $a_1,a_3<0$ and $\al>0$ in~\eqref{E:Scaling}, which together with~\eqref{E:SuperCrit} leads to the constraints:
\begin{align}
	\al &\in (3\ga-4,\ga) \subset (0,2), & \ga &> \frac{4}{3}.\label{E:Constraints}
\end{align}
Motivated by the scaling~\eqref{E:Scaling}--\eqref{E:SSab}, we look for a self-similar solution to~\eqref{E:SSCon}--\eqref{E:SSEnt} of the form:
\begin{align}
	\tilde{\rho} &= (-t)^{\frac{a_1}{b}}\rho(y), &
	\tilde{u} &= (-t)^{\frac{a_2}{b}}u(y), &
	\tilde{p} &= (-t)^{\frac{a_3}{b}}p(y),
	\label{E:SSrho}
\end{align}
where the coefficients $a_1,a_2,a_3,b$ are given by~\eqref{E:SSab} and in accordance with~\eqref{E:syDef}
\begin{align}
	y = \frac{r}{(-t)^\frac{1}{b}}.\label{E:SSyDef}
\end{align}
By analogy to~\cite{GuoHadzicJang21B,GuoHadzicJangSchrecker22,GuoHadzicJang23}, it will be particularly convenient to work with the so-called \emph{relative velocity} given by
\begin{align}
	\om(y) := (2-\ga)\frac{bu + y}{y} = \frac{(2-\al)u + 2(2-\ga)y}{2y},\label{E:omegaDef}
\end{align}
where we have used~\eqref{E:SSab}.

\begin{remark}
	The contracting Goldreich--Weber solutions mentioned in Remark~\ref{R:MassCritical} collapse homologously, that is, the self-similar fluid velocity $u$ is proportional to $y$. The quantity $y\omega$ thus corresponds to the fluid velocity relative to a frame comoving with homologous collapse, that is, a positive $\omega$ means the fluid is moving outwards (collapsing slower) relative to the homologous frame. It is for this reason $y\omega$ is also referred to as the \emph{wind velocity}~\cite{Yahil83}.
\end{remark}

In the next lemma, we formulate the non-autonomous system of ODE satisfied by the self-similar unknowns defined in~\eqref{E:SSrho}--\eqref{E:omegaDef}.

\begin{lemma}[The self-similar reduction]
	Let $(\rho,u, p)$ be the self-similar unknowns introduced through the ansatz~\eqref{E:SSrho}--\eqref{E:SSyDef}. Assume further that the triple $(\tilde{\rho}(t,\cdot),\tilde{u}(t,\cdot),\tilde{p}(t,\cdot))$ constitutes a smooth solution of the non-isentropic Euler--Poisson system~\eqref{E:SSCon}--\eqref{E:SSEnt} on some time interval $(-t,0)$, $t>0$. Then the pair $(\rho,\om)$ solves the system:
	\begin{align}
		\rho' &= \frac{\rho(yh-q)}{G},\label{E:rhoPrime1}\\
		\om' &= \frac{4-3\ga+\al-3\om}{y} - \frac{\om(yh-q)}{G},\label{E:omegaPrime1}
	\end{align}
	where $\om$ is given by~\eqref{E:omegaDef}, $p$ by
	\begin{align}
		p = \rho^{\ga}(y^3\rho\om)^{\frac{(2-\ga)\al}{4-3\ga+\al}}\label{E:p}
	\end{align}
	and:
	\begin{align}
		h(\rho,\om) &:= 2\om^2 + \Big(\ga-1-\frac{\al}{2}\Big)\om + \Big(\ga-1-\frac{\al}{2}\Big)(2-\ga) - \frac{4\pi}{4-3\ga+\al}\Big(1-\frac{\al}{2}\Big)^2\rho\om,\label{E:hDef}\\
		q(y;\rho,\om) &:= (2-\ga)\al\Big(1-\frac{\al}{2}\Big)^2\frac{p}{y\rho\om},\label{E:qDef}\\
		G(y;\rho,\om) &:= \ga\Big(1-\frac{\al}{2}\Big)^2\frac{p}{\rho} - y^2\om^2.\label{E:GDef}
	\end{align}
\end{lemma}

\begin{proof}
Substituting the self-similar variables into~\eqref{E:SSCon},~\eqref{E:SSMom} and~\eqref{E:SSEnt}, we obtain:
\begin{align}
	-\frac{a_1}{b}\rho + \frac{1}{b}y\pa_y\rho + \bigg(\pa_y + \frac{2}{y}\bigg)(\rho u) &= 0,\label{E:SSSSCon1}\\
	-\frac{a_2}{b} u + \frac{1}{b}y\pa_y u + u\pa_y u + \frac{\pa_y p}{\rho} + \frac{M[\rho](y)}{y^2} &= 0,\label{E:SSSSMom1}\\
	\left(-\frac{\al}{b} + \frac{1}{b}y\pa_y + u\pa_y\bigg)\bigg(\frac{p}{\rho^\ga}\right) &= 0,\label{E:SSSSEnt1}
\end{align}
where we have used $a_1=2a_2-2$, $b=1-a_2$ and \begin{align*}
	M[\tilde{\rho}](r) = (-t)^{\frac{a_1+3}{b}}M[\rho](y).
\end{align*}
Multiplying~\eqref{E:SSSSCon1} by $y^2$, rearranging and integrating over $[0,y]$, we conclude that
\begin{align*}
	M[\rho](y) = \frac{4\pi}{a_1+3}y^2\rho(bu + y),
\end{align*}
which by~\eqref{E:SSab} and~\eqref{E:omegaDef} gives
\begin{align}
	M[\rho](y) = \frac{4\pi}{4-3\ga+\al}y^3\rho\om.\label{E:SSSSMass}
\end{align}
In particular, the system~\eqref{E:SSSSCon1}--\eqref{E:SSSSEnt1} with~\eqref{E:SSSSMass} rearranges to give:
\begin{align}
	\pa_y(y^3\rho\om) - (4-3\ga+\al)y^2\rho &= 0,\label{E:SSSSCon2}\\
	-\Big(\ga-1-\frac{\al}{2}\Big)(2-\ga)y - \Big(2(2-\ga)-1+\frac{\al}{2}\Big)y\om + y\om^2 + y^2\om\pa_y\om&\notag\\
	+ \Big(1-\frac{\al}{2}\Big)^2\frac{\pa_yp}{\rho} + \frac{4\pi}{4-3\ga+\al}\Big(1-\frac{\al}{2}\Big)^2y\rho\om &= 0,\label{E:SSSSMom2}\\
	\frac{\pa_yp}{p} - \ga\frac{\pa_y\rho}{\rho} - \frac{(2-\ga)\al}{y\om} &= 0.\label{E:SSSSEnt2}
\end{align}
Upon rearranging~\eqref{E:SSSSCon2}, we obtain the identity
\begin{align*}
	\frac{(2-\ga)\al}{y\om} = \frac{(2-\ga)\al}{4-3\ga+\al}\bigg(\frac{3}{y} + \frac{\pa_y\rho}{\rho} + \frac{\pa_y\om}{\om}\bigg),
\end{align*}
which we use in~\eqref{E:SSSSEnt2} to solve for the self-similar pressure $p$, yielding
\begin{align}
	p &= \ka\rho^\ga (y^3\rho\om)^{\frac{(2-\ga)\al}{4-3\ga+\al}}\label{E:pkappa}
\end{align}
for some positive constant $\ka$. In order to set $\ka=1$ without loss of generality, we further rescale:
\begin{align*}
	\hat{y} &= \ka^{\frac{1}{n-2}}y, & \hat{p} &= \ka^{\frac{2}{n-2}} p\big(\ka^{-\frac{1}{n-2}}\hat{y}\big),
\end{align*}
where
\begin{align*}
	n = \frac{3(2-\ga)\al}{4-3\ga+\al}.
\end{align*}
One easily checks that the system~\eqref{E:SSSSCon2}--\eqref{E:SSSSMom2} and~\eqref{E:pkappa} remains invariant under this further rescaling and we find that~\eqref{E:pkappa} is obtained precisely with $\ka=1$, which is~\eqref{E:p}. Now substituting equation~\eqref{E:SSSSEnt2} into equation~\eqref{E:SSSSMom2} and solving for $\pa_y\rho$ and $\pa_y\om$ yields:
\begin{align}
	\pa_y\rho &= \frac{1}{G}(\rho B - y\om A),\label{E:SSSSrhoPrime}\\
	\pa_y\om &= -\frac{1}{G}\bigg(\om B - \ga\Big(1-\frac{\al}{2}\Big)^2\frac{p}{y\rho^2}A\bigg),\label{E:SSSSomegaPrime}
\end{align}
where $G$ is given by~\eqref{E:GDef} and:
\begin{align*}
	A &= (4-3\ga+\al)\rho - 3\rho\om,\\
	B &= \Big(\ga-1-\frac{\al}{2}\Big)(2-\ga)y + \Big(2(2-\ga)-1+\frac{\al}{2}\Big)y\om - y\om^2 - \frac{4\pi}{4-3\ga+\al}\Big(1-\frac{\al}{2}\Big)^2y\rho\om\\
	&- (2-\ga)\al\Big(1-\frac{\al}{2}\Big)^2\frac{p}{y\rho\om}.
\end{align*}
Upon regrouping~\eqref{E:SSSSrhoPrime}--\eqref{E:SSSSomegaPrime}, we finally obtain~\eqref{E:rhoPrime1}--\eqref{E:GDef}.
\end{proof}

\begin{remark}
	We note that for $\al=0$, equations~\eqref{E:rhoPrime1}--\eqref{E:omegaPrime1} exactly correspond to the self-similar reduction of the polytropic isentropic Euler--Poisson system from~\cite{GuoHadzicJangSchrecker22}.
\end{remark}

\begin{remark}
	It is natural to demand $\al<2$ according to~\eqref{E:SSrho} and~\eqref{E:Scaling} in order to observe implosion as $t\to0^-$.
\end{remark}

We see from the system of equations~\eqref{E:rhoPrime1}--\eqref{E:omegaPrime1} that points where $G=0$ play a crucial role. These are in fact the sonic points of the flow.

\begin{definition}[Sonic point]\label{D:Sonic}
	Let $(\rho,\om)$ be a $C^1$ solution of~\eqref{E:rhoPrime1}--\eqref{E:omegaPrime1} on a relatively open interval $I\subset [0,\infty)$. We say that a point $y_*\in I$ is a \emph{sonic point} if
	\begin{align*}
		G(y_*;\rho(y_*),\om(y_*)) = \ga\Big(1-\frac{\al}{2}\Big)^2\frac{p(y_*)}{\rho(y_*)} - y_*^2\om(y_*)^2 = 0.
	\end{align*}
	If $G(y_*;\rho(y_*),\om(y_*))>0$, we say the flow is \emph{subsonic} at $y_*$, while if $G(y_*;\rho(y_*),\om(y_*))<0$, we say the flow is \emph{supersonic} at $y_*$.
\end{definition}

A simple calculation based on this formula shows that at a sonic point, the curve $\{y=y_*\}$ corresponding to
\begin{align*}
	r(t) = y_*(-t)^{\frac{2(2-\ga)}{2-\al}}
\end{align*}
satisfies the relation
\begin{align*}
	\dot{r}(t) = \tilde{u}(t,r(t))-c_s(t,r(t)),
\end{align*}
where we recall that the sound speed \begin{align*}
	c_s = \sqrt{\ga\frac{\tilde{p}}{\tilde{\rho}}}.
\end{align*}
That is, $\{y=y_*\}$ corresponds to the backwards acoustic cone from the origin.

\begin{definition}[The far-field solution]\label{D:FarField}
	It can be easily checked that an explicit solution to~\eqref{E:rhoPrime1}--\eqref{E:omegaPrime1} is given by the triplet:
	\begin{align*}
		\rho_f &= \bar{\rho}_f y^{-\frac{2-\al}{2-\ga}}, &
		\om_f &= 2-\ga, &
		p_f &= \frac{2\pi(2-\ga)^2\bar{\rho}_f^2}{(\ga-\al)(4-3\ga+\al)}y^{-\frac{2(\ga-\al)}{2-\ga}},
	\end{align*}
	where
	\begin{align*}
		\bar{\rho}_f = (2-\ga)^{-\frac{\al}{3\ga-4}}\bigg(\frac{2\pi(2-\ga)^2}{(\ga-\al)(4-3\ga+\al)}\bigg)^{\frac{4-3\ga+\al}{(3\ga-4)(2-\ga)}}.
	\end{align*}
	We refer to the above triplet as the \emph{far-field} solution of~\eqref{E:rhoPrime1}--\eqref{E:omegaPrime1}.
\end{definition}

The far-field solution from Definition~\ref{D:FarField} represents a time-independent steady state solution
\begin{align*}
	(\rho_f,u_f,p_f) = \bigg(\bar{\rho}_fr^{-\frac{2-\al}{2-\ga}},0,\frac{2\pi (2-\ga)^2\bar{\rho}_f^2}{(\ga-\al)(4-3\ga+\al)} r^{-\frac{2(\ga-\al)}{2-\ga}}\bigg)
\end{align*}
of the non-isentropic Euler--Poisson system~\eqref{E:SSCon}--\eqref{E:SSEnt}. It has the unphysical feature that it blows up at the origin, but on the other hand it correctly models the asymptotic behaviour in $y$ as $y\to\infty$.

\subsection{Smoothness as a Selection Principle}

We emphasise that the smoothness of the self-similar profile across the sonic surface is a key requirement for stability (or even finite codimension stability) of imploding solutions as solutions of the Euler flow~\cite{MerleRaphaelRodnianskiSzeftel22A,MerleRaphaelRodnianskiSzeftel22B}. In the context of Euler flows with gravity, smoothness is used as a criterion in the physics literature to single out possibly stable imploding flows (see for example the discussion in~\cite{HaradaMaeda01,OriPiran90}). We are thus interested in finding smooth solutions to the reduced ODE system~\eqref{E:rhoPrime1}--\eqref{E:omegaPrime1}. It is therefore the case that at $y=0$ we must have
\begin{align}
	\om(0) = \om_0 = \frac{4-3\ga+\al}{3},\label{E:omega0}
\end{align}
and to model implosion at the origin, we also demand
\begin{align}
	\rho(0) = \rho_0 > 0.\label{E:rho0}
\end{align}

The following lemma crucially highlights how the requirement of smoothness of the self-similar solution imposes a selection principle that leads to the requirement $\ga>\frac{4}{3}$ and simultaneously fixes the remaining scaling freedom in the problem.

\begin{lemma}\label{L:SP}
	If $\al\in(0,2)$ and $1\leq\ga<\frac{4}{3}$, there is no smooth solution to~\eqref{E:rhoPrime1}--\eqref{E:omegaPrime1} satisfying the initial conditions~\eqref{E:omega0}--\eqref{E:rho0}. If, on the other hand, $\ga>\frac{4}{3}$ and~\eqref{E:Constraints} holds, then a necessary condition for the existence of a smooth solution to~\eqref{E:rhoPrime1}--\eqref{E:omegaPrime1} is
	\begin{align}
		n := n(\ga,\al) &= \frac{3(2-\ga)\al}{4-3\ga+\al}\in(2\mathbb{Z})\cap (2,\infty), & \al &\in (3\ga-4,\ga).\label{E:nDef}
	\end{align}
	Moreover, any such solution must satisfy the initial conditions:
	\begin{align}
		\rho(0) &= \frac{1}{6\pi}, & \om(0) &= \frac{4-3\ga+\al}{3}\label{E:IC}.
	\end{align}
\end{lemma}

\begin{proof}
Assume first that $\ga\in\big(1,\frac{4}{3}\big)$. Then, from~\eqref{E:p} and~\eqref{E:GDef}, we have 
\begin{align}
	G(y;\rho,\om) = \ga\Big(1-\frac{\al}{2}\Big)^2\frac{p}{\rho}-y^2\om^2 = \ga\Big(1-\frac{\al}{2}\Big)^2\rho^{\ga-1} (y^3\rho\om)^{\frac{(2-\ga)\al}{4-3\ga+\al}}-y^2\om^2.\label{E:GDefAlt}
\end{align}
For $\ga\in\big(1,\frac{4}{3}\big)$, $\al\in(0,2)$, one checks that
\begin{align*}
	\frac{3(2-\ga)\al}{4-3\ga+\al} \in (0,2)
\end{align*}
so that as $y\to0$, 
\begin{align}
	G\underset{y\to0}\sim \ga\Big(1-\frac{\al}{2}\Big)^2\frac{p}{\rho}\label{E:AsympG0}
\end{align}
due to~\eqref{E:omega0}--\eqref{E:rho0}. Comparing now to the ODE for $\rho$, we recall from~\eqref{E:qDef} that
\begin{align*}
	q = (2-\ga)\al\Big(1-\frac{\al}{2}\Big)^2\frac{p}{y\rho\om}\underset{y\to0}\sim y^\beta
\end{align*}
where
\begin{align}
	\beta := \beta(\ga,\al) = \frac{3(2-\ga)\al}{4-3\ga+\al}-1 = n(\ga,\al)-1,
\end{align}
also recalling~\eqref{E:nDef}. Note that $\beta\in(-1,1)$ and since $h$ defined by~\eqref{E:hDef} is order 1 near $y=0$, we conclude that
\begin{align*}
	yh-q\underset{y\to0}\sim -q.
\end{align*}
Therefore, using this, ~\eqref{E:rhoPrime1} and~\eqref{E:AsympG0} we infer that to the leading order
\begin{align*}
	\rho\underset{y\to0}\sim y^{-\frac{3 (2-\ga) \al}{\ga(4-3\ga+\al)}},
\end{align*}
which is in contradiction to the requirement~\eqref{E:rho0}, since the exponent above is strictly negative.

Assume now that $\ga>\frac{4}{3}$ and that~\eqref{E:Constraints} holds. It is straightforward to check that
\begin{align*}
	n(\ga,\al) \in \Big(\frac{3\ga}{2},\infty\Big)\subset(2,\infty),
\end{align*}
and therefore by~\eqref{E:GDefAlt} we conclude
\begin{align*}
	G\underset{y\to0}\sim -y^2\om^2.
\end{align*}
Just as above, we conclude that
\begin{align*}
	q = (2-\ga)\al\Big(1-\frac{\al}{2}\Big)^2\frac{p}{y\rho\om}\underset{y\to0}\sim y^\beta,
\end{align*}
where
\begin{align*}
	\beta(\ga,\al) \in \Big(\frac{3\ga}{2}-1,\infty\Big)\subset(1,\infty).
\end{align*}
Thus, as $y\to0$ we conclude that
\begin{align*}
	\rho' = \frac{\rho(yh-q)}{G}\underset{y\to0}\sim -\frac{\rho(0)h(0)}{y\om(0)^2}+O_{y\to0}\big(y^{\beta-2}\big).
\end{align*}
Considering the $\om$ equation, we see that
\begin{align*}
	\om'\underset{y\to0}\sim \frac{4-3\ga+\al-3\om(0)}{y} + \frac{h(0)}{y\om(0)}+O_{y\to0}\big(y^{\beta-2}\big).
\end{align*}
In order for both $\rho$ and $\om$ to be $C^1$ at the origin, we therefore require the two constraints:
\begin{align*}
	h(\rho(0),\om(0)) &= 0, & \om(0) &= \frac{4-3\ga+\al}{3}.
\end{align*}
Solving the first of these equations subject to the second, this is easily seen to require
\begin{align*}
	\rho(0) = \frac{1}{6\pi}.
\end{align*}
Thus, we see the necessity of~\eqref{E:omega0}--\eqref{E:rho0} with $\rho_0=\frac{1}{6\pi}$, and so we have proved~\eqref{E:IC}.

Assuming now that $\rho$ and $\om$ are indeed $C^1$ at the origin, clearly then $h=O_{y\to0}(y)$, and so if $n\in(2,3)$ so that $\beta\in(1,2)$, then
\begin{align*}
	yh-q\underset{y\to0}\sim -(2-\ga)\al\Big(1-\frac{\al}{2}\Big)^2\rho_0^{\ga+\frac{n}{3}-1}\om_0^{\frac{n}{3}-1}y^\beta.
\end{align*}
In particular, from the ODE for $\rho$, we see that
\begin{align*}
	\rho\underset{y\to0}\sim y^{\beta-1}
\end{align*}
and hence $\rho$ cannot be $C^2$. Thus, for a smooth solution, we require $n\geq3$.

Expanding
\begin{align*}
	\frac{1}{G} = -\frac{1}{y^2\om^2}+\frac{\ga(1-\frac{\al}{2})^2\frac{p}{\rho}}{y^2\om^2(\ga(1-\frac{\al}{2})^2\frac{p}{\rho}-y^2\om^2)},
\end{align*}
it is easy to see that
\begin{align}
	\rho' = -\frac{\rho h}{y\om^2} + \frac{\rho q}{y^2\om^2} + O_{y\to0}\big(y^{n-2}\big),\label{E:rhoPrimeO}
\end{align}
where we have used that $|yh-q|=O_{y\to0}\big(y^2\big)$ (at least) due to $n\geq3$ and $h(0)=0$ along with smoothness of $\rho$ and $\om$.

Suppose now that $n\not\in\mathbb{Z}$, $n>3$. Then, due to smoothness and $h(0)=0$, clearly $-\frac{\rho h}{y\om^2}$ admits a regular Taylor expansion, while
\begin{align*}
	\frac{\rho q}{y^2\om^2}\underset{y\to0}\sim\frac{\rho_0q_0}{\om_0^2}y^{n-3}
\end{align*}
and where
\begin{align*}
	q_0 = \lim_{y\to0}\frac{q(y)}{y^{n-1}} > 0.
\end{align*}
As this term is strictly more singular than the final term in~\eqref{E:rhoPrimeO}, we deduce that $\rho'$ is not smooth, a contradiction to smoothness of $\rho$. Thus, we require $n\in\mathbb{Z}$, $n\geq4$.

Moreover, we may easily verify that $n$ must in fact be even. This follows from the simple observation that as $\rho$ and $\om$ are the radial representations of smooth scalar functions on $\mathbb{R}^3$, all of their odd derivatives at the origin (and hence odd Taylor coefficients) must vanish and they admit an even extension to the real line. 

The middle term in~\eqref{E:rhoPrimeO} is thus
\begin{align*}
	\frac{\rho q}{y^2\om^2} = (2-\ga)\al\Big(1-\frac{\al}{2}\Big)^2y^{n-3}\rho^{\ga+\frac{n}{3}}\om^{\frac{n}{3}-3},
\end{align*}
which is lower order in $y$ than the final term, but also even, leading to a contradiction. Thus $\rho'$ is a sum of (non-trivial) even and odd terms, a contradiction to the fact that $\rho$ is even.
\end{proof}

\subsection{Main Theorem and Methodology}

Lemma~\ref{L:SP} leads to an important conclusion that we should look for imploding solutions in the subset $\ga\in\big(\frac{4}{3},2\big)$ of the supercritical range of polytropic indices $\ga$ and with values of $\al$ such that $n(\ga,\al)$ is even. Condition~\eqref{E:nDef} therefore fixes the remaining scaling freedom in the problem and we next formulate the main theorem of this work.

\begin{theorem}[Main Theorem]\label{T:Main}
	Let $\ga\in\big(\frac{19}{12},\frac{11}{6}\big)$, $n(\ga,\al)=4$. Then there exists a global, real-analytic solution $(\rho,\om)$ of the self-similar Euler--Poisson system~\eqref{E:rhoPrime1}--\eqref{E:omegaPrime1} such that the only sonic point is at the origin, the flow is globally supersonic and the solution satisfies the monotonicity conditions for all $y>0$,
	\begin{align}
		\rho'(y) < 0 < \om'(y).
	\end{align}
	Moreover, there exists a constant $\rho_\infty>0$ such that, as $y\to\infty$, $(\rho,\om)$ satisfy the asymptotics:
	\begin{align}
		\rho(y) &= \rho_\infty y^{-\frac{2-\al}{2-\ga}} + O\Big(y^{-\frac{3}{2}\frac{2-\al}{2-\ga}}\Big), & \om(y) &= 2-\ga + O\Big(y^{-\frac{1}{2}\frac{2-\al}{2-\ga}}\Big).\label{E:rhoomegaAsymp}
	\end{align}
\end{theorem}

\begin{remark}
	With the stated asymptotics, it is simple to check the behaviour of the mass and energy associated with the imploding solution localised to a ball of fixed radius $R$. Using~\eqref{E:rhoomegaAsymp}, we see that:
	\begin{align*}
		M(t,R) &= \int_0^R\tilde\rho(t,r)r^2\, dr \underset{t\to0^-}\sim R^{4-3\ga+\al},\\
		E(t,R) &= \frac{1}{2}\int_0^R\bigg(\tilde\rho|\tilde{u}|^2+\frac{2}{\ga-1}\tilde{p}-|\nabla\varphi|^2\bigg)r^2\, dr \underset{t\to0^-}\sim R^{5-2\frac{2-\al}{2-\ga}},
	\end{align*}
	where we recall relation~\eqref{E:NablaPhi}. The above relations are consistent with the scaling invariance in the problem. In particular, the mass and energy remain locally finite in any finite region on approach to the collapse time.
\end{remark}

\begin{remark}[The band structure]\label{R:gammaBands}
	We see from the analysis of a formal Taylor series at the origin in Section~\ref{S:Taylor}, that smooth solutions may be constructed at $y=0$ for some $\al\in2\mathbb{Z}_+$ when $\ga>\frac{4}{3}$ and $\ga\neq \frac{4}{3}+\frac{1}{2m}$, $m\in\mathbb{Z}_+$. These critical values $\frac{4}{3}+\frac{1}{2m}$ separate bands of possible solutions across which the qualitative properties (that is, sign conditions for certain derivatives) of the solution change. For example, for $\ga>\frac{11}{6}=\frac{4}{3}+\frac{1}{2}$, the density is locally increasing near the origin, and so such a solution, if it existed globally, would describe a star with a \emph{well} of lower density at the centre, invaded by a higher density region from outside. We therefore choose to restrict ourselves to the more likely scenario in which the density takes its highest value at the origin, and so treat the range $\big(\frac{19}{12},\frac{11}{6}\big)$.
\end{remark}

As indicated by the remark above, to construct the supersonic collapse solutions to~\eqref{E:rhoPrime1}--\eqref{E:omegaPrime1}, we begin by constructing a formal Taylor series at the origin, which is also a sonic point for the flow. The substitution of a formal Taylor expansion into the ODE system yields a recurrence relation for the coefficients of $\rho$ and $\om$. Via combinatorial estimates akin to those developed in~\cite{GuoHadzicJang21B,GuoHadzicJangSchrecker22}, we obtain growth bounds on these coefficients which enable us to prove the convergence of the series locally around $y=0$, and hence obtain a local, analytic solution, which is supersonic for small $y>0$. The details are contained in Section~\ref{S:Taylor} and the local existence statement is formulated in Theorem~\ref{T:LWP}.

The second and key step is then to extend this solution to the full interval $y\in[0,\infty)$. As the obstructions to continuation of the solution are the blow-up of either $\rho$ or $\om$, the vanishing of $\rho$, or the occurrence of a second sonic point, to exclude each of these possibilities, we make a series of bootstrap bounds. Using these bootstrap assumptions, we use nonlinear invariances of the solutions along the flow to propagate these globally in $y$. The key structural difference between the solutions constructed in Theorem~\ref{T:Main} and those constructed in~\cite{GuoHadzicJang21B,GuoHadzicJangSchrecker22} is that the sonic line coincides with the centre of symmetry $\{(t,r)=(t,0)\}$. Therefore the solution is supported in the region external to the boundary of the backward sound cone. As a consequence, the sonic point is at $y=0$, which effectively removes a characteristic scale present in the isentropic setting~\cite{GuoHadzicJang21B,GuoHadzicJangSchrecker22,GuoHadzicJang23,MerleRaphaelRodnianskiSzeftel22B}, which is the size of the non-trivial opening angle of the backward sound cone. As a consequence of this, we formulate our bootstrap assumption working with $\rho(0)-\rho(y)$ and $\om(y)-\om(0)$, both of which are shown to remain strictly positive for all $y>0$. That this is so locally around $y=0$ is a consequence of the precise knowledge of the Taylor expansion of the solution shown in Section~\ref{S:Taylor}. The crux of the argument is to propagate a ``sandwich" bound that compares $\rho(0)-\rho(y)$ to $\om(y)-\om(0)$ from below and above in a suitable way (see bootstrap assumptions~\eqref{E:B4}--\eqref{E:B5}). A careful argument detailing the dynamic propagation of these bootstrap bounds is presented in Theorem~\ref{T:GE}. Such an argument consists of significant new ingredients in comparison to earlier constructions of self-similar imploding flows. The proof that $\lim_{y\to\infty}\rho(y)=0$ is relatively easy and it is the content of Proposition~\ref{P:rhoLim}, whereas the proof of the asymptotic relation $\lim_{y\to\infty}\om(y)=2-\ga$ requires several new ideas and subtle bootstrap arguments tailored to the problem. The latter statement is the content of Proposition~\ref{P:omegaLim}, where in one of the preceding lemmas we use interval arithmetic in a very soft way. Together, Theorem~\ref{T:GE}, Proposition~\ref{P:rhoLim}, Proposition~\ref{P:omegaLim} and Lemma~\ref{L:Asymptotics} constitute the proof of Theorem~\ref{T:Main}.

\medskip 

{\bf Acknowledgments.}
CA and MH are supported by the EPSRC Early Career Fellowship EP/S02218X/1. MS is supported by the EPSRC Postdoctoral Research Fellowship EP/W001888/1.

\section{Local Existence}
\label{S:Taylor}

As discussed above, we begin the construction of the supersonic collapse solutions of Theorem~\ref{T:Main} by first establishing the local existence of solutions close to the self-similar origin, $y=0$. In this section, we establish this local existence by constructing convergent Taylor series for the density and relative velocity that give analytic solutions of the ODE system~\eqref{E:rhoPrime1}--\eqref{E:omegaPrime1} on their domain of convergence. This is the content of the next theorem.

\begin{theorem}[Local existence]\label{T:LWP}
	Let $\ga\in\big(\frac{19}{12},\frac{11}{6}\big)$ and $n\in2\mathbb{Z}\cap[4,\infty)$, where we recall~\eqref{E:nDef}. Then there exists a $\nu>0$ and sequences of coefficients $(\rho_N)_{N=0}^\infty$ and $(\om_N)_{N=0}^\infty$ such that the series:
	\begin{align*}
		\rho(y) &= \sum_{N=0}^\infty \rho_Ny^N, & \om(y) &= \sum_{N=0}^\infty \om_Ny^N,
	\end{align*}
	converge absolutely on the interval $|y|<\nu$ and the functions $\rho(y)$ and $\om(y)$ are real analytic solutions of equations~\eqref{E:rhoPrime1}--\eqref{E:omegaPrime1}.
\end{theorem}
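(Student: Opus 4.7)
The plan is to construct the Taylor coefficients of $\rho$ and $\omega$ about $y=0$ recursively by matching coefficients in \eqref{E:RHOPRIME}--\eqref{E:OMEGAPRIME}, and then to establish absolute convergence of the resulting formal power series via combinatorial growth bounds on these coefficients, in the spirit of \cite{GHJ2021, GHJS2022}.

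First I would clear the singular denominators by multiplying through by $G$ (respectively $yG$) to rewrite the system as the polynomial-type relations
\begin{equation*}
G\rho' = \rho(yh - q), \qquad yG\omega' = (4-3\gamma+\alpha - 3\omega)G - y\omega(yh-q).
\end{equation*}
Lemma~\ref{L:SP} forces $\rho_0 = \tfrac{1}{6\pi}$ and $\omega_0 = \tfrac{4-3\gamma+\alpha}{3}$, and I would further impose the even-parity ansatz $\rho_{2k+1} = \omega_{2k+1} = 0$ justified by that lemma. Plugging in these initial data, one verifies that $h$ vanishes to order $y^2$ at the origin (since $h(\rho_0,\omega_0)=0$ and $\rho - \rho_0, \omega - \omega_0$ start at order $y^2$), $q$ vanishes to order $y^{n-1}$ with $n \geq 4$, and $G = -\omega_0^2 y^2 + O(y^4)$. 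The fractional powers of $\rho$ and $\omega$ appearing through $p = \rho^\gamma(y^3\rho\omega)^{n/3}$ are handled by binomial expansion of $(\rho/\rho_0)^a$ and $(\omega/\omega_0)^b$ about $1$, which converges and produces analytic dependence on the lower-order Taylor coefficients, since $\rho_0, \omega_0 > 0$. Matching the coefficient of $y^{N+1}$ then yields a $2\times 2$ linear system for $(\rho_N, \omega_N)$ whose right-hand side depends polynomially on the previously determined coefficients. The leading diagonal contribution $-N\omega_0^2$ that arises from the $-\omega_0^2 y^2$ part of $G$ acting on $\rho'$ (and analogously on $\omega'$) ensures that the determinant of this system is nonzero for all sufficiently large $N$; the handful of small-$N$ base cases are verified by direct computation. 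This uniquely determines the sequences $(\rho_N)$ and $(\omega_N)$.

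For convergence, I would prove by induction that there exist constants $C, M > 0$ with $|\rho_N| + |\omega_N| \leq C M^N$ for all $N \geq 0$. The main ingredients are Cauchy-product estimates of the form $\sum_{k=0}^N |\rho_k||\omega_{N-k}| \leq C^2(N+1) M^N$, together with analogous bounds for the Taylor coefficients of $\rho^a$ and $\omega^b$ obtained via the Fa\`a di Bruno formula or direct combinatorics. The factor $1/N$ gained from inverting the leading linear system absorbs the $(N+1)$ losses produced by each convolution, and choosing $M$ sufficiently large in terms of $\gamma, \alpha, n$ closes the induction. The radius of convergence $\nu := 1/M$ then delivers the claimed real-analytic solution on $|y| < \nu$, and by construction this solution satisfies \eqref{E:RHOPRIME}--\eqref{E:OMEGAPRIME} on that interval.

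The main obstacle is the careful bookkeeping of the convolution losses stemming from the nonlinear compositions $h$, $q$, and $G$, especially controlling the coefficients produced by the non-integer exponents of $\rho$ and $\omega$ inside $p$. The restriction $n \geq 4$ is crucial here: it guarantees that $G$ has a clean $y^2$-vanishing structure at the origin, so that the inversion of the leading linear system produces exactly the $1/N$ gain needed to dominate the combinatorial losses, without being corrupted by lower-order singular contributions from $G^{-1}$ that would arise if $n = 2$ or $3$.
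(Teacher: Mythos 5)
Your high-level strategy — clear denominators, fix the initial data via Lemma~\ref{L:SP}, match Taylor coefficients to get a $2\times 2$ linear recursion, then close an inductive growth bound — matches the structure of the paper's proof. However, there are two substantive gaps.

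The main one is that the geometric bound $|\rho_N| + |\omega_N| \le C M^N$ you propose is too weak to close the induction, and the claim that ``the factor $1/N$ gained from inverting the leading linear system absorbs the $(N+1)$ losses produced by each convolution'' is not correct. The right-hand side of the recursion contains terms of the form $\sum_{i+j=M}(i+1)\,\rho_{i+1}\,(\omega^2)_{j-1}$: the weight $(i+1)$ costs a factor of $N$, the outer convolution costs another factor of $N$, and $(\omega^2)_{j-1}$ is itself a convolution costing a third factor. With pure geometric bounds this produces roughly $N^3 C^3 M^N$, and since the matrix $A_M$ has determinant $\sim M^2$ and entries $\sim M$, inversion returns only one factor $1/N$ — leaving a net loss of $N^2$, so $|\rho_N|$ cannot be bounded by $CM^N$ at the next step. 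The paper closes this by working with the sharper ansatz $|\bar\rho_M|, |\bar\omega_M| \le C^{M-\beta}/M^3$ (with $\beta\in(1,2)$), so that nested convolutions stay $O(C^{M-2\beta}/M^3)$ via the elementary inequality $\sum_{i+j=M} i^{-3}j^{-3}\lesssim M^{-3}$, and the derivative weight $(i+1)$ merely downgrades a $1/(i+1)^3$ to $1/(i+1)^2$ without destroying the summability; see Lemmas~\ref{lemma:quadratic}--\ref{L6}. Without the inverse-power factor your induction simply does not close.

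The secondary gap concerns the parity ansatz. You impose $\rho_{2k+1}=\omega_{2k+1}=0$, but for $n>4$ the actual structure is that $\rho,\omega$ are power series in $y^{n-2}$: only coefficients at multiples of $n-2$ are nonzero. For $n=4$ these coincide, but for $n=6$ your framework would still treat $\rho_2$ as an unknown, and one would have to separately establish that the recursion forces it (and all other non-multiples of $4$) to vanish — which you do not address. The paper sidesteps this by substituting $x=y^{n-2}$ at the outset, so that $\bar\rho(x)=\sum_M\bar\rho_M x^M$ automatically encodes the correct parity, and the recursion in $M$ is clean. If you stay in the $y$-variable you need to justify that the recursion is degenerate (and consistently solvable by zero) at the intermediate orders, or show separately that these coefficients vanish; otherwise the ``unique determination'' step is incomplete for $n>4$.
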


The proof of Theorem~\ref{T:LWP} is presented in Section~\ref{S:LWPProof}, in Section~\ref{SS:Taylor} we derive the recurrence relations satisfied by the Taylor coefficients of $(\rho,\om)$ near $y=0$ and in Section~\ref{SS:Induction} we explain the key inductive steps necessary for the series convergence argument.

\subsection{Taylor Coefficients}
\label{SS:Taylor}

For smooth solutions at the origin $y=0$, we recall from~\eqref{E:IC} that:
\begin{align*}
	\rho_0 &= \frac{1}{6\pi}, & \om_0 &= \frac{4-3\ga+\al}{3}.
\end{align*}
Moreover, from~\eqref{E:nDef} we obtain the identities:
\begin{align}
	\al &= \frac{n(3\ga-4)}{n+3\ga-6}, & \om_0 &= \frac{(3\ga-4)(2-\ga)}{n+3\ga-6}.\label{E:alpha(n)}
\end{align}
We therefore first establish recurrence relations to derive the higher order Taylor coefficients of any smooth solution from the lower order coefficients. We seek the Taylor series of smooth solutions in the form:
\begin{align*}
	\rho(y) &= \sum_{N=0}^{\infty}\rho_{N(n-2)}y^{N(n-2)}, & \om(y) &= \sum_{N=0}^{\infty}\om_{N(n-2)}y^{N(n-2)}.
\end{align*}
To facilitate the computation of the higher order coefficients, we define
\begin{align*}
	Q(y) := \frac{q}{y}(y) &= \sum_{N=0}^\infty Q_{N(n-2)}y^{N(n-2)}.
\end{align*}
Due to the structure of the solutions, we define a new variable
\begin{align*}
	x = y^{n-2}
\end{align*}
and express $\rho$, $\om$ and $Q$ as functions of $x$ through:
\begin{align}
	\bar{\rho}(x) &= \sum_{M=0}^\infty \bar{\rho}_{M}x^M, & \bar{\om}(x) &= \sum_{M=0}^\infty
	\bar{\om}_{M}x^M, & \bar{Q}(x) := \frac{q}{y}(x) &= \sum_{M=0}^\infty \bar{Q}_Mx^M.\label{E:TaylorxForm}
\end{align}
We note that $\bar{\rho}_0=\rho_0$, $\bar{\om}_0=\om_0$ and $\bar{\rho}_M=\bar{\om}_M=\bar{Q}_{M+1}=0$ for $M<0$. Moreover, we introduce the following notation to facilitate the manipulation of products of Taylor series:
\begin{equation}
\begin{aligned}
	(\bar{\rho}\bar{\om})_M &= \sum_{i+j=M}\bar{\rho}_i\bar{\om}_j, & &\qquad & (\bar{\om}^2)_M &= \sum_{i+j=M}\bar{\om}_i\bar{\om}_j,\\
	(\bar{\rho}^2\bar{\om})_M &= \sum_{i+j+k=M}\bar{\rho}_i\bar{\rho}_j\bar{\om}_k, & &\qquad & (\bar{\rho}\bar{\om}^2)_M &= \sum_{i+j+k=M}\bar{\rho}_i\bar{\om}_j\bar{\om}_k,\label{E:Round}
\end{aligned}
\end{equation}
and
\begin{equation}
\begin{aligned}
	[\bar{\rho}\bar{\om}]_M &= \sum_{\substack{i+j=M\\ i,j\neq M}}\bar{\rho}_i\bar{\om}_j, & [\bar{\om}^2]_M &= \sum_{\substack{i+j=M\\ i,j\neq M}}\bar{\om}_i\bar{\om}_j,\\
	[\bar{\rho}^2\bar{\om}]_M &= \sum_{\substack{i+j+k=M\\ i,j,k\neq M}}\bar{\rho}_i\bar{\rho}_j\bar{\om}_k, & [\bar{\rho}\bar{\om}^2]_M &= \sum_{\substack{i+j+k=M\\ i,j,k\neq M}}\bar{\rho}_i\bar{\om}_j\bar{\om}_k, &[\bar{\om}^3]_M &= \sum_{\substack{i+j+k=M\\ i,j,k\neq M}}\bar{\om}_i\bar{\om}_j\bar{\om}_k.\label{E:Square}
\end{aligned}
\end{equation}

We are now in a position to derive the main recurrence relation between the Taylor coefficients of the series~\eqref{E:TaylorxForm}.

\begin{lemma}\label{L:MatrixForm}
	Let $\ga\in\big(\frac{19}{12},\frac{11}{6}\big)$ and $n\in2\mathbb{Z}\cap[4,\infty)$. Suppose that $(\rho,\om,Q)$ defined by~\eqref{E:TaylorxForm} are local, smooth solutions of~\eqref{E:rhoPrime1}--\eqref{E:omegaPrime1}. Then, for any $M\geq1$, the coefficients $\bar{\rho}_M$ and $\bar{\om}_M$ are determined from the coefficients $(\bar{\rho}_0,\ldots,\bar{\rho}_{M-1})$ and $(\bar{\om}_0,\ldots,\bar{\om}_{M-1})$ by the relation
	\begin{align}
		A_M\bigg(\begin{array}{c}
			\bar{\rho}_M\\
			\bar{\om}_M
		\end{array}\bigg) =
		\bigg(\begin{array}{c}
			\mathcal{F}_M\\
			\mathcal{G}_M
		\end{array}\bigg),\label{E:Matrix}
	\end{align}
	where $A_M=\begin{pmatrix} A_M^{11} & A_M^{12}\\ A_M^{21} & A_M^{22}\end{pmatrix}$ has coefficients:
	\begin{equation}
	\begin{aligned}
		A_M^{11} &= M(n-2)\bar{\om}_0^2 - \frac{2}{9}\Big(1-\frac{\al}{2}\Big)^2,\\
		A_M^{12} &= 4\bar{\rho}_0\bar{\om}_0 + \Big(\ga-1-\frac{\al}{2}\Big)\bar{\rho}_0 - \frac{2}{9}\Big(1-\frac{\al}{2}\Big)^2\frac{\bar{\rho}_0}{\bar{\om}_0},\\
		A_M^{21} &= -\frac{2}{9}\Big(1-\frac{\al}{2}\Big)^2\frac{\bar{\om}_0}{\bar{\rho}_0},\\
		A_M^{22} &= -M(n-2)\bar{\om}_0^2 + \bar{\om}_0^2 + \Big(\ga-1-\frac{\al}{2}\Big)\bar{\om}_0 - \frac{2}{9}\Big(1-\frac{\al}{2}\Big)^2,\label{E:AMDef}
	\end{aligned}
	\end{equation}
	and:
	\begin{align}
		\mathcal{F}_M &= -2[\bar{\rho}\bar{\om}^2]_M - \Big(\ga-1-\frac{\al}{2}\Big)[\bar{\rho}\bar{\om}]_M + \frac{2}{9}\Big(1-\frac{\al}{2}\Big)^2\frac{[\bar{\rho}^2\bar{\om}]_M}{\bar{\rho}_0\bar{\om}_0}\label{E:FMDef}\\
		&+ \sum_{i+j+k=M}(i+1)\frac{(n-2)\ga}{(2-\ga)\al}\bar{\rho}_{i+1}\bar{\om}_{j-1}\bar{Q}_k + \sum_{i+j=M}\bar{\rho}_i\bar{Q}_j - \sum_{\substack{i+j=M\\ i< M-1}}(i+1)(n-2)\bar{\rho}_{i+1}(\bar{\om}^2)_{j-1},\notag\\
		\mathcal{G}_M &= -3\bar{\om}_0[\bar{\om}^2]_M + [\bar{\om}^3]_M - \Big(\ga-1-\frac{\al}{2}\Big)[\bar{\om}^2]_M + \frac{2}{9}\Big(1-\frac{\al}{2}\Big)^2\frac{[\bar{\rho}\bar{\om}^2]_M}{\bar{\rho}_0\bar{\om}_0}\label{E:GMDef}\\
		&- \sum_{i+j+k=M}(i+1)\frac{(n-2)\ga}{(2-\ga)\al}\bar{\om}_{i+1}\bar{\om}_{j-1}\bar{Q}_k + \sum_{i+j=M}\bigg(\bar{\om}_i\bar{Q}_j - \frac{3\ga}{(2-\ga)\al}\big((\bar{\om}^2)_i - \bar{\om}_0\bar{\om}_i\big)\bar{Q}_j\bigg)\notag\\
		&+ \sum_{\substack{i+j=M\\ i< M-1}}(i+1)(n-2)\bar{\om}_{i+1}(\bar{\om}^2)_{j-1}.\notag
	\end{align}
\end{lemma}

\begin{proof}
We begin by re-writing~\eqref{E:rhoPrime1}--\eqref{E:omegaPrime1} in the $x$ variable as:
\begin{align}
	\frac{(n-2)\ga}{(2-\ga)\al}x\bar{\om}\bar{\rho}'\bar{Q} + \bar{\rho}\bar{Q} - (n-2)x\bar{\om}^2\bar{\rho}' &= \bar{\rho}\bigg[2\bar{\om}^2 + \Big(\ga-1-\frac{\al}{2}\Big)(\bar{\om}+2-\ga) - \frac{2}{9}\Big(1-\frac{\al}{2}\Big)^2\frac{\bar{\rho}\bar{\om}}{\bar{\rho}_0\bar{\om}_0}\bigg],\label{E:rhoPrime2}\\
	-\frac{(n-2)\ga}{(2-\ga)\al}x\bar{\om}\bar{\om}'\bar{Q} + \bar{\om}\bar{Q} + (n-2)x\bar{\om}^2\bar{\om}' &= \bar{\om}\bigg[2\bar{\om}^2 + \Big(\ga-1-\frac{\al}{2}\Big)(\bar{\om}+2-\ga) - \frac{2}{9}\Big(1-\frac{\al}{2}\Big)^2\frac{\bar{\rho}\bar{\om}}{\bar{\rho}_0\bar{\om}_0}\bigg]\notag\\
	&- 3(\bar{\om}_0-\bar{\om})\bigg(\frac{\ga}{(2-\ga)\al}\bar{\om}\bar{Q} - \bar{\om}^2\bigg).\label{E:omegaPrime2}
\end{align}
Substituting~\eqref{E:TaylorxForm} into~\eqref{E:rhoPrime2} yields
\begin{multline*}
	\sum_{M=0}^{\infty}\bigg[\sum_{i+j+k=M}(i+1)\frac{(n-2)\ga}{(2-\ga)\al}\bar{\rho}_{i+1}\bar{\om}_{j-1}\bar{Q}_k + \sum_{i+j=M}\Big(\bar{\rho}_i\bar{Q}_j - (i+1)(n-2)\bar{\rho}_{i+1}(\bar{\om}^2)_{j-1}\Big)\bigg]x^M\\
	= \sum_{M=0}^{\infty}\bigg[2(\bar{\rho}\bar{\om}^2)_M + \Big(\ga-1-\frac{\al}{2}\Big)\big((\bar{\rho}\bar{\om})_M + (2-\ga)\bar{\rho}_M\big) - \frac{2}{9}\Big(1-\frac{\al}{2}\Big)^2\frac{(\bar{\rho}^2\bar{\om})_M}{\bar{\rho}_0\bar{\om}_0}\bigg]x^M,
\end{multline*}
where we have used the identities:
\begin{align*}
	f'(x) &= \sum_{M=0}^\infty(M+1)f_{M+1}x^M, & xf(x) &= \sum_{M=0}^\infty f_Mx^{M+1}.
\end{align*}
Comparing coefficients of equal order, we thus obtain 
\begin{multline}
	\sum_{i+j+k=M}(i+1)\frac{(n-2)\ga}{(2-\ga)\al}\bar{\rho}_{i+1}\bar{\om}_{j-1}\bar{Q}_k + \sum_{i+j=M}\Big(\bar{\rho}_i\bar{Q}_j - (i+1)(n-2)\bar{\rho}_{i+1}(\bar{\om}^2)_{j-1}\Big)\\
	= 2(\bar{\rho}\bar{\om}^2)_M + \Big(\ga-1-\frac{\al}{2}\Big)\big((\bar{\rho}\bar{\om})_M + (2-\ga)\bar{\rho}_M\big) - \frac{2}{9}\Big(1-\frac{\al}{2}\Big)^2\frac{(\bar{\rho}^2\bar{\om})_M}{\bar{\rho}_0\bar{\om}_0}.\label{E:rhoPrime3}
\end{multline}
Similarly, substituting~\eqref{E:TaylorxForm} into~\eqref{E:omegaPrime2}, we group terms and obtain
\begin{align}
	&- \sum_{i+j+k=M}(i+1)\frac{(n-2)\ga}{(2-\ga)\al}\bar{\om}_{i+1}\bar{\om}_{j-1}\bar{Q}_k\notag\\
	&+ \sum_{i+j=M}\bigg(\bar{\om}_i\bar{Q}_j + (i+1)(n-2)\bar{\om}_{i+1}(\bar{\om}^2)_{j-1} - \frac{3\ga}{(2-\ga)\al}\big((\bar{\om}^2)_i - \bar{\om}_0\bar{\om}_i\big)\bar{Q}_j\bigg)\label{E:omegaPrime3}\\
	&= 3\bar{\om}_0(\bar{\om}^2)_M - (\bar{\om}^3)_M + \Big(\ga-1-\frac{\al}{2}\Big)\big((\bar{\om}^2)_M+(2-\ga)\bar{\om}_M\big) - \frac{2}{9}\Big(1-\frac{\al}{2}\Big)^2\frac{(\bar{\rho}\bar{\om}^2)_M}{\bar{\rho}_0\bar{\om}_0}.\notag
\end{align}
In order to identify the highest order coefficients, we observe from~\eqref{E:IC} the simple identity
\begin{align}
	h(\rho_0,\om_0) = 0.\label{E:h(0)=0}
\end{align}
Now, for $M>0$, we isolate the highest order coefficients of~\eqref{E:rhoPrime3} (recalling $\bar{Q}_0=0$ and~\eqref{E:h(0)=0}) and rearrange to obtain
\begin{equation}
\begin{aligned}
	&M(n-2)\bar{\om}_0^2\bar{\rho}_M + 4\bar{\rho}_0\bar{\om}_0\bar{\om}_M + \Big(\ga-1-\frac{\al}{2}\Big)\bar{\rho}_0\bar{\om}_M - \frac{2}{9}\Big(1-\frac{\al}{2}\Big)^2\frac{\bar{\rho}_0\bar{\om}_0\bar{\rho}_M + \bar{\rho}_0^2\bar{\om}_M}{\bar{\rho}_0\bar{\om}_0}\\
	&= -2[\bar{\rho}\bar{\om}^2]_M - \Big(\ga-1-\frac{\al}{2}\Big)[\bar{\rho}\bar{\om}]_M + \frac{2}{9}\Big(1-\frac{\al}{2}\Big)^2\frac{[\bar{\rho}^2\bar{\om}]_M}{\bar{\rho}_0\bar{\om}_0}\\
	&+ \sum_{i+j+k=M}(i+1)\frac{(n-2)\ga}{(2-\ga)\al}\bar{\rho}_{i+1}\bar{\om}_{j-1}\bar{Q}_k + \sum_{i+j=M}\bar{\rho}_i\bar{Q}_j - \sum_{\substack{i+j=M\\ i< M-1}}(i+1)(n-2)\bar{\rho}_{i+1}(\bar{\om}^2)_{j-1}.\label{E:rhoPrime4}
\end{aligned}
\end{equation}
Similarly, from~\eqref{E:omegaPrime3}
\begin{equation}
\begin{aligned}
	&-M(n-2)\bar{\om}_0^2\bar{\om}_M + \bar{\om}_0^2\bar{\om}_M + \Big(\ga-1-\frac{\al}{2}\Big)\bar{\om}_0\bar{\om}_M - \frac{2}{9}\Big(1-\frac{\al}{2}\Big)^2\frac{\bar{\rho}_0\bar{\om}_0\bar{\om}_M+\bar{\om}_0^2\bar{\rho}_M}{\bar{\rho}_0\bar{\om}_0}\\
	&= - 3\bar{\om}_0[\bar{\om}^2]_M + [\bar{\om}^3]_M - \Big(\ga-1-\frac{\al}{2}\Big)[\bar{\om}^2]_M + \frac{2}{9}\Big(1-\frac{\al}{2}\Big)^2\frac{[\bar{\rho}\bar{\om}^2]_M}{\bar{\rho}_0\bar{\om}_0}\\
	&- \sum_{i+j+k=M}(i+1)\frac{(n-2)\ga}{(2-\ga)\al}\bar{\om}_{i+1}\bar{\om}_{j-1}\bar{Q}_k + \sum_{i+j=M}\bigg(\bar{\om}_i\bar{Q}_j - \frac{3\ga}{(2-\ga)\al}\big((\bar{\om}^2)_i - \bar{\om}_0\bar{\om}_i\big)\bar{Q}_j\bigg)\\
	&+ \sum_{\substack{i+j=M\\ i< M-1}}(i+1)(n-2)\bar{\om}_{i+1}(\bar{\om}^2)_{j-1}.\label{E:omegaPrime4}
\end{aligned}
\end{equation}
Finally, letting $\mathcal{F}_M$ and $\mathcal{G}_M$ denote the right hand sides of~\eqref{E:rhoPrime4} and~\eqref{E:omegaPrime4} respectively, we obtain the claimed identity~\eqref{E:Matrix}.
\end{proof}

With these relations, we are able to collect the explicit forms of the first coefficients.

\begin{lemma}\label{L:Taylor}
	Let $\ga\in\big(\frac{19}{12},\frac{11}{6}\big)$ and $n\in2\mathbb{Z}\cap[4,\infty)$. Suppose that $(\bar{\rho},\bar{\om},\bar{Q})$ defined by~\eqref{E:TaylorxForm} are local, smooth solutions of~\eqref{E:rhoPrime1}--\eqref{E:omegaPrime1}. Then the zeroth- and first-order coefficients are given explicitly by:
	\begin{equation}
	\begin{aligned}
		\bar{\rho}_0 &= \frac{1}{6\pi}, & &\qquad & \bar{\om}_0 &= \frac{(3\ga-4)(2-\ga)}{n+3\ga-6},\\
		\bar{\rho}_1 &= -\frac{3(n+1)n}{2(\ga-1)(11-6\ga)}p_0, & &\qquad & \bar{\om}_1 &= \frac{3n(n-2)}{2(\ga-1)(11-6\ga)}\frac{\bar{\om}_0}{\bar{\rho}_0}p_0,\label{E:rho1omega1}
	\end{aligned}
	\end{equation}
	where we define
	\begin{align*}
		p_0 = \bar{\rho}_0^{\ga}(\bar{\rho}_0\bar{\om}_0)^{\frac{n}{3}}.
	\end{align*}
	In particular, we have the simple relation
	\begin{align}
		\bar{\rho}_1 = -\frac{n+1}{n-2}\frac{\bar{\rho}_0}{\bar{\om}_0}\bar{\om}_1.\label{E:rho1omega1Alt}
	\end{align}
	Moreover, the second-order coefficients satisfy
	\begin{align}
		\bar{\rho}_2 &= \frac{9n(2n-1)S(n,\ga)}{4(\ga-1)^2(6\ga-7)(12\ga-19)(11-6\ga)^2}{\frac{p_0^2}{\bar{\rho}_0}},\label{E:rho2}\\
		\bar{\om}_2 &= -\frac{9n(n-2)T(n,\ga)}{4(\ga-1)^2(6\ga-7)(12\ga-19)(11-6\ga)^2}\frac{\bar{\om}_0}{\bar{\rho}_0}{\frac{p_0^2}{\bar{\rho}_0}},\label{E:omega2}
	\end{align}
	where
	\begin{align*}
		S(n,\ga) &= 6\ga(\ga-1)(11-6\ga) - 3n(12\ga^2 - 34\ga + 21) + n^2(36\ga^3 - 30\ga^2 - 132\ga + 133),\\
		T(n,\ga) &= 12\ga(\ga-1)(11-6\ga) - n(144\ga^2 - 402\ga + 259) + n^2(72\ga^3 - 132\ga^2 - 66\ga + 133).
	\end{align*}
\end{lemma}

\begin{proof}
The expressions for $\bar{\rho}_0$ and $\bar{\om}_0$ follow directly from~\eqref{E:IC}. To obtain $\bar{\rho}_1$ and $\bar{\om}_1$, we first note that these expressions directly imply
\begin{align}
	\bar{Q}_1 &= (2-\ga)\al\Big(1-\frac{\al}{2}\Big)^2\bar{\rho}_0^\ga(\bar{\rho}_0\bar{\om}_0)^{\frac{n}{3}-1} = n\Big(1-\frac{\al}{2}\Big)^2\frac{p_0}{\bar{\rho}_0}\label{E:Q1}
\end{align}
Moreover, we see easily from~\eqref{E:FMDef}--\eqref{E:GMDef} that:
\begin{align*}
	\mathcal{F}_1 &= \Big(\ga-1-\frac{\al}{2}\Big)(2-\ga)\bar{\rho}_0 + \bar{\rho}_0\bar{Q}_1, & \mathcal{G}_1 &= \Big(\ga-1-\frac{\al}{2}\Big)(2-\ga)\bar{\om}_0 + \bar{\om}_0\bar{Q}_1.
\end{align*}
Thus,~\eqref{E:Matrix} for $M=1$ reduces to
\begin{multline}
	(n-2)\bar{\om}_0^2\bar{\rho}_1 + 4\bar{\rho}_0\bar{\om}_0\bar{\om}_1 + \Big(\ga-1-\frac{\al}{2}\Big)\bar{\rho}_0\bar{\om}_1 - \frac{2}{9}\Big(1-\frac{\al}{2}\Big)^2\bigg(\bar{\rho}_1+\frac{\bar{\rho}_0}{\bar{\om}_0}\bar{\om}_1\bigg)\\ = \Big(\ga-1-\frac{\al}{2}\Big)(2-\ga)\bar{\rho}_0 + \bar{\rho}_0\bar{Q}_1\label{E:rho1Prime}
\end{multline}
and
\begin{multline}
	-(n-3)\bar{\om}_0^2\bar{\om}_1 + \Big(\ga-1-\frac{\al}{2}\Big)\bar{\om}_0\bar{\om}_1 - \frac{2}{9}\Big(1-\frac{\al}{2}\Big)^2\bigg(\frac{\bar{\om}_0}{\bar{\rho}_0}\bar{\rho}_1+\bar{\om}_1\bigg)\\ = \Big(\ga-1-\frac{\al}{2}\Big)(2-\ga)\bar{\om}_0 + \bar{\om}_0\bar{Q}_1\label{E:omega1Prime}
\end{multline}
respectively. Multiplying the second equation by $\bar{\rho}_0$, dividing by $\bar{\om}_0$ and substituting this into the first equation, we thus obtain~\eqref{E:rho1omega1Alt}. Substituting~\eqref{E:rho1omega1Alt} back into~\eqref{E:rho1Prime} then yields~\eqref{E:rho1omega1}. For $M=2$, we find that:
\begin{align*}
	\mathcal{F}_2 &= -2(\bar{\rho}_0\bar{\om}_1^2 + 2\bar{\om}_0\bar{\rho}_1\bar{\om}_1) - \Big(\ga-1-\frac{\al}{2}\Big)\bar{\rho}_1\bar{\om}_1 + \frac{2}{9}\Big(1-\frac{\al}{2}\Big)^2\frac{\bar{\om}_0\bar{\rho}_1^2+2\bar{\rho}_0\bar{\rho}_1\bar{\om}_1}{\bar{\rho}_0\bar{\om}_0}\\
	&+ \frac{(n-2)\ga}{(2-\ga)\al}\bar{\om}_0\bar{\rho}_1\bar{Q}_1 + \bar{\rho}_1\bar{Q}_1 + \bar{\rho}_0\bar{Q}_2 - 2(n-2)\bar{\om}_0\bar{\rho}_1\bar{\om}_1,\\
	\mathcal{G}_2 &= -\Big(\ga-1-\frac{\al}{2}\Big)\bar{\om}_1^2 + \frac{2}{9}\Big(1-\frac{\al}{2}\Big)^2\frac{\bar{\rho}_0\bar{\om}_1^2 + 2\bar{\om}_0\bar{\rho}_1\bar{\om}_1}{\bar{\rho}_0\bar{\om}_0}\\
	&- \frac{(n+1)\ga}{(2-\ga)\al}\bar{\om}_0\bar{\om}_1\bar{Q}_1 + \bar{\om}_1\bar{Q}_1 + \bar{\om}_0\bar{Q}_2 + 2(n-2)\bar{\om}_0\bar{\om}_1^2,
\end{align*}
which can be simplified to:
\begin{align*}
	\mathcal{F}_2 &= -2(\bar{\rho}_0\bar{\om}_1^2 + 2\bar{\om}_0\bar{\rho}_1\bar{\om}_1) - \Big(\ga-1-\frac{\al}{2}\Big)\bar{\rho}_1\bar{\om}_1 + \frac{2}{9}\Big(1-\frac{\al}{2}\Big)^2\bigg(\frac{\bar{\rho}_1}{\bar{\rho}_0}+2\frac{\bar{\om}_1}{\bar{\om}_0}\bigg)\bar{\rho}_1\\
	&+ \bigg(1+\frac{(n-2)\ga}{n}\bigg)\bar{\rho}_1\bar{Q}_1 + \bar{\rho}_0\bar{Q}_2 - 2(n-2)\bar{\om}_0\bar{\rho}_1\bar{\om}_1,\\
	\mathcal{G}_2 &= -\Big(\ga-1-\frac{\al}{2}\Big)\bar{\om}_1^2 + \frac{2}{9}\Big(1-\frac{\al}{2}\Big)^2\bigg(\frac{\bar{\om}_1}{\bar{\om}_0}+2\frac{\bar{\rho}_1}{\bar{\rho}_0}\bigg)\bar{\om}_1\\
	&+ \bigg(1-\frac{(n+1)\ga}{n}\bigg)\bar{\om}_1\bar{Q}_1 + \bar{\om}_0\bar{Q}_2 + 2(n-2)\bar{\om}_0\bar{\om}_1^2.
\end{align*}
Now $\bar{Q}_1$ is given by~\eqref{E:Q1} and $\bar{Q}_2$ is the second-order terms of
\begin{align*}
	\bar{Q} &= n\bar{\om}_0\Big(1-\frac{\al}{2}\Big)^2x\rho^\ga(\rho\om)^{\frac{n}{3}-1}\\
	&= n\bar{\om}_0\Big(1-\frac{\al}{2}\Big)^2x(\bar{\rho}_0+\bar{\rho}_1x)^{\ga}\big((\bar{\rho}_0+\bar{\rho}_1x)(\bar{\om}_0+\bar{\om}_1x)\big)^{\frac{n}{3}-1} + O_{x\to0}(x^3)\\
	&= n\Big(1-\frac{\al}{2}\Big)^2\frac{p_0}{\rho_0}x\bigg(1+\ga\frac{\bar{\rho}_1}{\bar{\rho}_0}x\bigg)\bigg(1+\bigg(\frac{n}{3}-1\bigg)\bigg(\frac{\bar{\rho}_1}{\rho_0}+\frac{\bar{\om}_1}{\om_0}\bigg)x\bigg) + O_{x\to0}(x^3)\\
	&= \bar{Q}_1x + \bar{Q}_1\bigg(\ga\frac{\bar{\rho}_1}{\bar{\rho}_0}+\bigg(\frac{n}{3}-1\bigg)\bigg(\frac{\bar{\rho}_1}{\rho_0}+\frac{\bar{\om}_1}{\om_0}\bigg)\bigg)x^2 +O_{x\to0}(x^3),
\end{align*}
that is,
\begin{align*}
	\bar{Q}_2 = \bar{Q}_1\bigg(\ga\frac{\bar{\rho}_1}{\bar{\rho}_0}+\bigg(\frac{n}{3}-1\bigg)\bigg(\frac{\bar{\rho}_1}{\rho_0}+\frac{\bar{\om}_1}{\om_0}\bigg)\bigg).
\end{align*}
We have from~\eqref{E:Matrix} that:
\begin{align*}
	\bar{\rho}_2 &= \frac{1}{\det A_2}\big(A^{22}_2\mathcal{F}_2-A^{12}_2\mathcal{G}_2\big), &
	\bar{\om}_2 &= \frac{1}{\det A_2}\big(A^{11}_2\mathcal{G}_2-A^{21}_2\mathcal{F}_2\big),
\end{align*}
and where from the proof of Lemma~\ref{L:rhoMomegaMBounds}
\begin{align*}
	\det A_2 = -\frac{(n-2)^2(6\ga-7)(12\ga-19)}{2(3\ga-4)^2}\bar{\om}_0^4.
\end{align*}
Noting that
\begin{align*}
	1-\frac{\al}{2} = \frac{3(n-2)}{2(3\ga-4)}\bar{\om}_0,
\end{align*}
a tedious calculation reduces~\eqref{E:Matrix} to~\eqref{E:rho2} and~\eqref{E:omega2}.
\end{proof}

In order to solve~\eqref{E:Matrix} for the coefficients of order $M\geq2$, we require the invertibility of the matrix $A_M$ for $M\geq2$. This is the content of the next lemma. 

\begin{lemma}\label{L:rhoMomegaMBounds}
	Let $\ga\in\big(\frac{19}{12},\frac{11}{6}\big)$ and $n\in2\mathbb{Z}\cap[4,\infty)$. Then the matrix $A_M$ is invertible and there exists a constant $D=D(n,\ga)>0$, independent of $M$, such that for all integers $M>1$, we have the bounds:
	\begin{align}
		|\bar{\rho}_M| &\leq \frac{D}{M}\Big(|\mathcal{F}_M| + \frac{1}{M}|\mathcal{G}_M|\Big),\label{E:rhoMBound}\\
		|\bar{\om}_M| &\leq \frac{D}{M}\Big(|\mathcal{G}_M| + \frac{1}{M}|\mathcal{F}_M|\Big)\label{E:omegaMBound}.
	\end{align}
\end{lemma}

\begin{proof}
We first observe from~\eqref{E:AMDef} that the matrix $A_M$ has determinant
\begin{align*}
	\det A_M = -\bar{\om}_0^2\bigg(M^2(n-2)^2\bar{\om}_0^2 - M(n-2)\Big(\bar{\om}_0 + \Big(\ga-1-\frac{\al}{2}\Big)\Big)\bar{\om}_0 - \frac{6}{9}\Big(1-\frac{\al}{2}\Big)^2\bigg).
\end{align*}
Making explicit the dependence of $\bar{\om}_0$ and $\al$ on $n$ and $\ga$ from~\eqref{E:IC}, this yields
\begin{align}
	\det A_M = -\frac{(n-2)^2(3\ga-4)^2(2-\ga)^4}{2(n+3\ga-6)^4}\big(M(3\ga-4)+1\big)\big(2M(3\ga-4)-3\big)\label{E:det}.
\end{align}
Thus, providing $M>1$ and $\ga\in\big(\frac{19}{12},\frac{11}{6}\big)$, we have
\begin{align*}
	\frac{(n-2)^2(12\ga-19)}{2(3\ga-4)}\bar{\om}_0^4M^2 \leq |\det A_M| \leq (n-2)^2\bar{\om}_0^4M^2.
\end{align*}
Moreover, from~\eqref{E:AMDef}, each matrix coefficient $A_M^{ij}$, with $i,j\in\{1,2\}$, is easily seen to satisfy
\begin{align*}
	\big|A^{ij}_M\big|\leq CM
\end{align*}
for some $C(n,\ga)>0$. Now, from~\eqref{E:Matrix}, we deduce:
\begin{align*}
	\bar{\rho}_M &= \frac{1}{\det A_M}\big(A^{22}_M\mathcal{F}_M-A^{12}_M\mathcal{G}_M\big), &
	\bar{\om}_M &= \frac{1}{\det A_M}\big(A^{11}_M\mathcal{G}_M-A^{21}_M\mathcal{F}_M\big).
\end{align*}
and so we conclude that there exists a constant $D>0$ such that~\eqref{E:rhoMBound} and~\eqref{E:omegaMBound} hold.
\end{proof}

\begin{remark}\label{R:det}
	(i) From the expression~\eqref{E:det}, we see that for $n\geq4$ and $\ga\in\big(\frac{4}{3},2\big)$, $\det A_M$ vanishes whenever $\ga=\frac{4}{3}+\frac{1}{2M}$. In fact, each of these thresholds marks a qualitative change in the behaviour of the solutions with, for example, local solutions around $y=0$ having density monotone increasing for $\ga>\frac{11}{6}$ and monotone decreasing for $\ga\in\big(\frac{4}{3},\frac{11}{6}\big)$. The sequence of values $\ga_m=\frac{4}{3}+\frac{1}{2m}$ therefore demarcates a series of bands $(\ga_{m+1},\ga_m)$ with qualitatively different behaviour. The solutions constructed in Theorem~\ref{T:Main} are those corresponding to the first such band.\\
	(ii) It follows immediately from~\eqref{E:FMDef}--\eqref{E:GMDef} and~\eqref{E:rhoMBound}--\eqref{E:omegaMBound} that, for $\ga\in\big(\frac{19}{12},\frac{11}{6}\big)$, given $M_0\geq3$ and $\beta\in(1,2)$, we may find $C_0=C_0(M_0,\beta,n,\ga)\geq1$ such that for $1<M\leq M_0$, we have the bounds:
	\begin{align}
		|\rho_M| &\leq \frac{C^{M-\beta}}{M^3}, & |\om_M| &\leq \frac{C^{M-\beta}}{M^3}
	\end{align}
	for all $C\geq C_0$.
\end{remark}

\subsection{Estimates on Taylor Coefficients}\label{SS:Induction}

In order to use~\eqref{E:Matrix} to establish quantitative bounds on the higher order Taylor coefficients, we require some preliminary results to control key nonlinear quantities arising in $\mathcal{F}_M$ and $\mathcal{G}_M$.

\begin{lemma}\label{L:Quadratic}
	Let $M\geq2$ and suppose there exist constants $C\geq1$ and $\beta\in(1,2)$ such that, for $m=2,\ldots,M$, we have:
	\begin{align*}
		|\bar{\rho}_m| &\leq \frac{C^{m-\beta}}{m^3}, & |\bar{\om}_m| &\leq \frac{C^{m-\beta}}{m^3}.
	\end{align*}
	Then there exists a constant $D>0$, independent of $C$ and $\beta$, such that 
	\begin{align}
		\big|(\bar{\rho}\bar{\om})_{M}\big| + \big|(\bar{\om}^2)_{M}\big| &\leq D\frac{C^{M-\beta}}{M^3}.\label{E:Quadratic}
	\end{align}
\end{lemma}

\begin{proof}
We show~\eqref{E:Quadratic} by proving the bound for $(\bar{\om}^2)_{M}$, as the other estimate follows similarly. Note first that from Remark~\ref{R:det}(ii), taking $M_0=4$, we may always choose $D$ such that~\eqref{E:Quadratic} holds for $M=2,3$. In the case $M\geq4$, we split the sum and apply the assumption to bound
\begin{align*}
	\big|(\bar{\om}^2)_{M}\big| &\leq \sum_{i=0}^{M}|\bar{\om}_i||\bar{\om}_{M-i}|\\
	&= 2|\bar{\om}_0||\bar{\om}_{M}|+2|\bar{\om}_1||\bar{\om}_{M-1}|+\sum_{i=2}^{M-2}|\bar{\om}_i||\bar{\om}_{M-i}|\\
	&\leq D_1\bigg(\frac{C^{M-\beta}}{M^3}+\frac{C^{M-1-\beta}}{(M-1)^3}\bigg)+\sum_{i=2}^{M-2}\frac{C^{i-\beta}}{i^3}\frac{C^{M-i-\beta}}{(M-i)^3}\\
	&\leq D_1\frac{C^{M-\beta}}{M^3}\bigg(1+\frac{C^{-1}M^3}{(M-1)^3}+\sum_{i=2}^{M-2}\frac{C^{-\beta}}{i^3(M-i)^3}\bigg)\\
	&\leq D\frac{C^{M-\beta}}{M^3}
\end{align*}
as claimed, where the constant $D_1>0$ may be different in each line and where in the last inequality we have used \cite[Lemma B.1]{GuoHadzicJangSchrecker22} to estimate
\begin{align*}
	\sum_{i=2}^{M-2}\frac{1}{i^3(M-i)^3} \leq \frac{D_2}{M^3}
\end{align*}
for some constant $D_2>0$.
\end{proof}

\begin{lemma}\label{L:BracketBounds}
	Let $M>2$ and suppose there exist constants $C\geq1$ and $\beta\in(1,2)$ such that, for $m=2,\ldots,M-1$, we have:
	\begin{align*}
		|\bar{\rho}_m| &\leq \frac{C^{m-\beta}}{m^3}, & |\bar{\om}_m| &\leq \frac{C^{m-\beta}}{m^3}.
	\end{align*}
	Then there exists a constant $D>0$, independent of $C$ and $\beta$, such that:
	\begin{align}
		\big|[\bar{\rho}\bar{\om}]_M\big| + \big|[\bar{\om}^2]_M\big| &\leq D\frac{C^{M-1-\beta}}{M^3},\label{E:rhoMBracket}\\
		\big|[\bar{\rho}^2\bar{\om}]_M\big| + \big|\bar{\rho}\bar{\om}^2]_M\big| + \big|[\bar{\om}^3]_M\big| &\leq D\frac{C^{M-1-\beta}}{M^3},\label{E:omegaMBracket}
	\end{align}
	where we recall the notation~\eqref{E:Square}.
\end{lemma}

\begin{proof}
We first we prove~\eqref{E:rhoMBracket}. We estimate
\begin{align*}
	\big|[\bar{\rho}\bar{\om}]_M\big| &\leq \sum_{\substack{i+j=M\\ i,j\neq M}}|\bar{\rho}_i||\bar{\om}_j| = |\bar{\rho}_1||\bar{\om}_{M-1}| + |\bar{\om}_1||\bar{\rho}_{M-1}| + \sum_{\substack{i+j=M\\ i,j>1}}|\bar{\rho}_i||\bar{\om}_j|\\
	&\leq (|\bar{\rho}_1|+|\bar{\om}_1|)\frac{C^{M-1-\beta}}{(M-1)^3} + \sum_{\substack{i+j=M\\ i,j>1}}\frac{C^{i-\beta}}{i^3}\frac{C^{j-\beta}}{j^3}\\
	&= (|\bar{\rho}_1|+|\bar{\om}_1|)\frac{C^{M-1-\beta}}{(M-1)^3} + \sum_{\substack{i+j=M\\ i,j>1}}\frac{C^{M-2\beta}}{i^3j^3}\\
	&\leq (|\bar{\rho}_1|+|\bar{\om}_1|)\frac{C^{M-1-\beta}}{(M-1)^3} + D_1\frac{C^{M-2\beta}}{M^3}
\end{align*}
for some constant $D_1>0$, where we have applied \cite[Lemma B.1]{GuoHadzicJangSchrecker22} to bound
\begin{align*}
	\sum_{i+j=M}\frac{1}{i^3j^3} \leq \frac{D_1}{M^3}.
\end{align*}
The bounds for the remaining terms in~\eqref{E:rhoMBracket} follow similarly.

To estimate the cubic terms in~\eqref{E:omegaMBracket}, we first treat the case $M=3$. 
Since $\bar{\rho}_0$, $\bar{\om}_0$, $\bar{\rho}_1$ and $\bar{\om}_1$ are each bounded by a constant depending only on $n$ and $\ga$, we have
\begin{align*}
	\big|[\bar{\rho}\bar{\om}^2]_3\big| &\leq \sum_{\substack{i+j+k=3\\ i,j,k\neq 3}}|\bar{\rho}_i||\bar{\om}_j||\bar{\om}_k| = |\bar{\rho}_1||\bar{\om}_1|^2 + 2\bar{\rho}_0|\bar{\om}_1||\bar{\om}_2| + 2\bar{\om}_0|\bar{\rho}_1||\bar{\om}_2| + 2\bar{\om}_0|\bar{\om}_1||\bar{\rho}_2|\\
	&\leq |\bar{\rho}_1||\bar{\om}_1|^2 + 2(\bar{\rho}_0|\bar{\om}_1| + \bar{\om}_0|\bar{\rho}_1| + \bar{\om}_0|\bar{\om}_1|)\frac{C^{2-\beta}}{2^3}\\
	&\leq D_2\frac{C^{3-1-\beta}}{3^3}
\end{align*}
for some constant $D_2>0$. Similar estimates hold for $\big|[\bar{\rho}^2\bar{\om}]_3\big|$ and $\big|[\bar{\om}^3]_3\big|$.

When $M\geq4$, we again separate terms and obtain
\begin{align*}
	&\big|[\bar{\rho}\bar{\om}^2]_M\big| \leq \sum_{\substack{i+j+k=M\\ i,j,k\neq M}}|\bar{\rho}_i||\bar{\om}_j||\bar{\om}_k|\\
	&= (2\bar{\rho}_0|\bar{\om}_1|+|\bar{\rho}_1||\bar{\om}_0|)|\bar{\om}_{M-1}| + 2|\bar{\om}_0||\bar
	\bar{\om}_1||\bar{\rho}_{M-1}| + 2|\bar{\rho}_1||\bar{\om}_1||\bar{\om}_{M-2}| + |\bar{\om}_1|^2|\bar{\rho}_{M-2}|\\
	&+ \bar{\rho}_0\sum_{\substack{i+j=M\\ i,j>1}}|\bar{\om}_i||\bar{\om}_j|+|\bar{\om}_0|\sum_{\substack{i+j=M\\ i,j>1}}|\bar{\rho}_i||\bar{\om}_j|+|\bar{\rho}_1|\sum_{\substack{i+j = M-1\\ i,j>1}}|\bar{\om}_i||\bar{\om}_j|+|\bar{\om}_1|\sum_{\substack{i+j=M-1\\ i,j>1}}|\bar{\rho}_i||\bar{\om}_j|\\
	&+ \sum_{\substack{i+j+k=M\\ i,j,k>1}}|\bar{\rho}_i||\bar{\om}_j||\bar{\om}_k|\\
	&\leq (2\bar{\rho}_0|\bar{\om}_1|+|\bar{\rho}_1||\bar{\om}_0|+2|\bar{\om}_0||\bar{\om}_1|)\frac{C^{M-1-\beta}}{(M-1)^3} + \big(2|\bar{\rho}_1||\bar{\om}_1|+|\bar{\om}_1|^2\big)\frac{C^{M-2-\beta}}{(M-2)^3}\\
	&+ (|\bar{\rho}_0|+|\bar{\om}_0|)\sum_{\substack{i+j=M\\ i,j>1}}\frac{C^{i-\beta}}{i^3}\frac{C^{j-\beta}}{j^3}+(|\bar{\rho}_1|+|\bar{\om}_1|)\sum_{\substack{i+j=M-1\\ i,j>1}}\frac{C^{i-\beta}}{i^3}\frac{C^{j-\beta}}{j^3}\\
	&+ \sum_{\substack{i+j+k=M\\ i,j,k>1}}\frac{C^{i-\beta}}{i^3}\frac{C^{j-\beta}}{j^3}\frac{C^{k-\beta}}{k^3}\\
	&\leq D_3\bigg(\frac{C^{M-1-\beta}}{(M-1)^3}+\frac{C^{M-2-\beta}}{(M-2)^3}+\sum_{\substack{i+j=M\\ i,j>1}}\frac{C^{M-2\beta}}{i^3j^3}+\sum_{\substack{i+j=M-1\\ i,j>1}}\frac{C^{M-1-2\beta}}{i^3j^3}+\sum_{\substack{i+j+k=M\\ i,j,k>1}}\frac{C^{M-3\beta}}{i^3j^3k^3}\bigg)\\
	&\leq D_3\bigg(\frac{C^{M-1-\beta}}{(M-1)^3}+\frac{C^{M-2-\beta}}{(M-2)^3}+\frac{C^{M-2\beta}}{M^3}+\frac{C^{M-1-2\beta}}{M^3}+\frac{C^{M-3\beta}}{M^3}\bigg),
\end{align*}
for some constant $D_3>0$ (possibly different in each line), where we have again applied \cite[Lemma B.1]{GuoHadzicJangSchrecker22} to control the sums in the penultimate line. As $\beta\in(1,2)$, this implies the first bound of~\eqref{E:omegaMBracket}. Since the other combinations of $\bar{\rho}$ and $\bar{\om}$ follow analogously, we conclude the proof.
\end{proof}

In order to establish combinatorial estimates on the Taylor coefficients of $\bar{Q}$ that allow us to show the convergence of the formal series, we first collect some useful identities for the Taylor coefficients of certain key nonlinear quantities using the Fa\`a di Bruno formula.

\begin{lemma}\label{L:QSeries}
	The coefficients of the Taylor series for $\bar{Q}$ are given by
	\begin{align}
		\bar{Q}_M = (2-\ga)\al\Big(1-\frac{\al}{2}\Big)^2\sum_{\substack{i+j=M\\i>0}}\bar{P}_{i-1}\bar{W}_j,\label{E:QBar}
	\end{align}
	where, for $M\geq1$, we have:
	\begin{align}
		\bar{P}_M &= \sum_{\la\in\Lambda_M}\frac{M!\ga(\ga-1)\cdots(\ga+1-|\la|)}{\la_1!\cdots\la_M!}\bar{\rho}_0^{\ga-|\la|}\prod_{k=1}^{M}\bar{\rho}_k^{\la_k},\label{E:FdB1}\\
		\bar{W}_M &= \sum_{\la\in\Lambda_M}\frac{M!\big(\frac{n}{3}-1\big)\big(\frac{n}{3}-2\big)\cdots\big(\frac{n}{3}-|\la|\big)}{\la_1!\cdots\la_M!}(\bar{\rho}_0\bar{\om}_0)^{\frac{n}{3}-1-|\la|}\prod_{k=1}^{M}(\bar{\rho}\bar{\om})_k^{\la_k},\label{E:FdB2}
	\end{align}
	with
	\begin{align*}
		\Lambda_M &= \bigg\{\la=(\la_1,\dots,\la_M)\in\mathbb{N}_0^M : \sum_{k=1}^M k\la_k=M\bigg\}, & |\la| &= \la_1+\dots+\la_M.
	\end{align*}
\end{lemma}

\begin{proof}
Clearly $\bar{Q}_M$ is defined by
\begin{align*}
	\bar{Q}_M = \Big(1-\frac{\al}{2}\Big)^2\frac{(2-\ga)\al}{M!}\frac{d^M}{dx^M}\Big( x\bar{\rho}^\ga(\bar{\rho}\bar{\om})^{\frac{n}{3}-1}\Big)\bigg|_{x=0}
\end{align*}
and we recall the Faà di Bruno formula
\begin{align*}
	\frac{d^M}{dx^M}\big(f(g(x))\big) = \sum_{\la\in\Lambda_M}\frac{M!}{\la_1!\cdots\la_M!}f^{(|\la|)}\big(g(x)\big)\prod_{k=1}^{M}\bigg(\frac{g^{(k)}(x)}{k!}\bigg)^{\la_k}.
\end{align*}
Applying the Faà di Bruno formula separately to $\bar{\rho}^\ga$ and $(\bar{\rho}\bar{\om})^{\frac{n}{3}-1}$, we obtain~\eqref{E:FdB1} and~\eqref{E:FdB2} respectively. Then, since
\begin{align*}
	\bar{Q} = (2-\ga)\al\Big(1-\frac{\al}{2}\Big)^2 x\bar{\rho}^\ga(\bar{\rho}\bar{\om})^{\frac{n}{3}-1} = \sum_{M=0}^\infty\bar{Q}_Mx^M,
\end{align*}
the claimed expression~\eqref{E:QBar} follows.
\end{proof}

With the identities of the previous lemma, we now establish combinatorial bounds on the coefficients $\bar{P}_M$ and $\bar{W}_M$.

\begin{lemma}\label{L:PWBounds}
	Let $M>1$ and $\beta\in(1,2)$. Then there exists a constant $C_0\geq1$ such that, if:
	\begin{align*}
		|\bar{\rho}_m| &\leq \frac{C_0^{m-\beta}}{m^3}, &
		|\bar{\om}_m| &\leq \frac{C_0^{m-\beta}}{m^3},
	\end{align*}
	for each integer $1<m\leq M$, then there exists a constant $D>0$, independent of $C_0$ and $\beta$, such that:
	\begin{align}
		\big|\bar{P}_M\big| &\leq D\bigg(\frac{C_0^{M-\beta}}{M^3} + \frac{C_0^{M-2}}{M^2}\bigg),\\
		\big|\bar{W}_M\big| &\leq D\bigg(\frac{C_0^{M-\beta}}{M^3} + \frac{C_0^{M-2}}{M^2}\bigg).
	\end{align}
\end{lemma}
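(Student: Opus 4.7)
The plan is to substitute the inductive hypothesis directly into the Fa\`a di Bruno expressions \eqref{FdB1}--\eqref{FdB2} and to control the resulting combinatorial sums term-by-term. By Lemma \ref{L:Taylor}, the low-order coefficients $\bar\rho_0$, $\bar\om_0$, $\bar\rho_1$, $\bar\om_1$ are bounded explicitly in terms of $(\gamma, n)$, so there is a constant $K = K(\gamma, n) \ge 1$ with $|\bar\rho_1| + |\bar\om_1| \le K$. Combining this with the inductive hypothesis via Lemma \ref{lemma:quadratic} also yields $|(\bar\rho\bar\om)_k| \le D_0 \, C^{k-\beta}/k^3$ for $2 \le k \le M$, which is the analogue of the hypothesis needed later to bound $\bar W_M$.

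The main step is to split the sum \eqref{FdB1} over $\Lambda_M$ according to the value of $\lambda_1$. For the subset $\Lambda_M^\circ = \{\lambda \in \Lambda_M : \lambda_1 = 0\}$, only Taylor coefficients with index $k \ge 2$ appear in the product, and the inductive hypothesis yields
\[
\prod_{k=2}^M |\bar\rho_k|^{\lambda_k} \le \frac{C^{M - \beta|\lambda|}}{\prod_{k=2}^M k^{3\lambda_k}}.
\]
The coefficient $|\gamma(\gamma-1)\cdots(\gamma+1-|\lambda|)|/(\lambda_1! \cdots \lambda_M!)$ can be split as the product of a multinomial factor and the decaying binomial term $|\binom{\gamma}{|\lambda|}|$. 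Summing over $\lambda \in \Lambda_M^\circ$ of each fixed length $|\lambda| = j$, and applying an iterated version of the convolution bound from \cite[Lemma B.1]{GHJS2022} to control sums of the shape $\sum_{\lambda \in \Lambda_M^\circ, \, |\lambda|=j} \prod_{k\ge 2} k^{-3\lambda_k} \lesssim C_j/M^3$ with $\sum_j C_j < \infty$, produces a net contribution bounded by $D \cdot C^{M-\beta}/M^3$.

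For $\lambda \in \Lambda_M^\star := \{\lambda \in \Lambda_M: \lambda_1 \ge 1\}$, write $a = \lambda_1 \ge 1$ and use $|\bar\rho_1|^a \le K^a$ in place of the missing inductive bound at $k=1$. The remaining indices form a partition of $M-a$ with parts of size at least $2$, so their $\bar\rho_k$ factors are again controlled by the inductive hypothesis. Choosing $C_0 \ge 2K$ and $C \ge C_0$ allows each $K^a$ to be absorbed into $C^a$ at the cost of a factor $2^{-a}$, and summing the geometric series in $a$ together with the convolution bound on the residual partition yields a contribution bounded by $D \cdot C^{M-2}/M^2$. The weaker decay $M^{-2}$ here reflects the degenerate case $\lambda = (M, 0, \ldots, 0)$, where no convolution gain is available and one must rely solely on the binomial asymptotic $|\gamma(\gamma-1)\cdots(\gamma-M+1)|/M! \lesssim M^{-\gamma-1}$, which for $\gamma \in (\tfrac{19}{12}, \tfrac{11}{6})$ provides only modest polynomial decay.

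The estimate for $\bar W_M$ follows the same pattern with the triple $(\bar\rho\bar\om)_k$ replacing $\bar\rho_k$: the preliminary bound $|(\bar\rho\bar\om)_k| \le D_0 C^{k-\beta}/k^3$ for $k \ge 2$ plays the role of the inductive hypothesis, and $(\bar\rho\bar\om)_0 = \bar\rho_0 \bar\om_0$, $(\bar\rho\bar\om)_1 = \bar\rho_0 \bar\om_1 + \bar\rho_1 \bar\om_0$ are absorbed into the explicit constant $K$; the rational exponent $\tfrac{n}{3}-1$ in place of $\gamma$ does not affect the analysis since the corresponding binomial coefficient enjoys the same type of polynomial decay. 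The main obstacle is uniformity: the constant $C_0$ must be chosen independently of $\beta \in (1,2)$, and all the convolution sums and binomial asymptotics must be tracked carefully enough to produce exactly the two terms $C^{M-\beta}/M^3$ and $C^{M-2}/M^2$ claimed, rather than some intermediate scaling that would be insufficient to close the subsequent induction driving Theorem \ref{T:LWP}.
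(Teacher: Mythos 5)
Your approach matches the paper's, which simply cites \cite[Lemma B.5]{GHJS2022} and records the single new bound $\left|\left(\tfrac{n}{3}-1\right)\left(\tfrac{n}{3}-2\right)\cdots\left(\tfrac{n}{3}-m\right)\right| \leq 3\left(\tfrac{n}{3}-1\right)(m-1)!$ as the only additional ingredient needed. Your reconstruction of the Fa\`a di Bruno argument---splitting $\Lambda_M$ according to $\lambda_1$, absorbing $\bar\rho_1$ and $(\bar\rho\bar\om)_1$ into an explicit constant $K$, using the convolution bounds of \cite[Lemma B.1]{GHJS2022} for the $\lambda_1=0$ part, and identifying the extremal partition $\lambda=(M,0,\ldots,0)$ as the source of the weaker $C^{M-2}/M^2$ term---is the correct structure and is faithful to the GHJS2022 argument being invoked.

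One caveat worth flagging: you dismiss the exponent $\tfrac{n}{3}-1$ in the $\bar W_M$ case as giving ``the same type of polynomial decay'' through the binomial coefficient, but this is precisely the point the paper singles out as requiring a separate uniform bound on the falling-factorial product, rather than a binomial asymptotic. The asymptotic $|\binom{n/3-1}{m}|\sim m^{-n/3}$ is weaker for $n=4$ (decay $m^{-4/3}$) than the corresponding $|\binom{\gamma}{m}|\sim m^{-\gamma-1}$ with $\gamma>\tfrac{19}{12}$, and in any event one needs a uniform, not asymptotic, bound over all $m\le M$. The bound the paper records yields only $|\binom{n/3-1}{m}|\lesssim 1/m$, which is coarser still but suffices once combined with the decay coming from $\prod_k k^{-3\lambda_k}$. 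Spelling this out rather than asserting equivalence of decay rates would make your argument airtight, but it does not alter the essential correctness of your approach.
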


\begin{proof}
The proof follows similarly to \cite[Lemma B.5]{GuoHadzicJangSchrecker22}, noting that
\begin{align*}
	\bigg|\bigg(\frac{n}{3}-1\bigg)\bigg(\frac{n}{3}-2\bigg)\cdots\bigg(\frac{n}{3}-m\bigg)\bigg| \leq 3\bigg(\frac{n}{3}-1\bigg)(m-1)!
\end{align*}
holds for all even integers $n\geq4$.
\end{proof}

\begin{lemma}\label{L:QBound}
	Let $M>2$ and suppose that, for $C_0$ defined by Lemma~\ref{L:PWBounds}, there exist constants $C\geq C_0$ and $\beta\in(1,2)$ such that:
	\begin{align*}
		|\bar{\rho}_m| &\leq \frac{C^{m-\beta}}{m^3}, &
		|\bar{\om}_m| &\leq \frac{C^{m-\beta}}{m^3},
	\end{align*}
	for each integer $1<m<M$. Then there exists a constant $D>0$, independent of $C$ and $\beta$, such that
	\begin{align}
		|\bar{Q}_M| &\leq D\frac{C^{M-1-2\beta}}{M^3}.
	\end{align}
\end{lemma}

\begin{proof}
We see that
\begin{align*}
	|\bar{Q}_M| \leq (2-\ga)\al\Big(1-\frac{\al}{2}\Big)^2\sum_{i+j=M}|\bar{P}_{i-1}||\bar{W}_j|
\end{align*}
and by Lemma~\ref{L:PWBounds}, we have that there exists a constant $D_1>0$ such that, for all $M>1$, we have:
\begin{align*}
	\big|\bar{P}_M\big| &\leq D_1\bigg(\frac{C^{M-\beta}}{M^3} + \frac{C^{M-2}}{M^2}\bigg), &
	\big|\bar{W}_M\big| &\leq D_1\bigg(\frac{C^{M-\beta}}{M^3} + \frac{C^{M-2}}{M^2}\bigg).
\end{align*}
We first show the desired inequality for $\bar{Q}_3$. Since $\bar{P}_1$ and $\bar{W}_1$ are continuous functions of $\bar{\rho}_0$, $\bar{\om}_0$, $\bar{\rho}_1$ and $\bar{\om}_1$, which are all bounded only in terms of $n$ and $\ga$, then for $M=3$ we have
\begin{align*}
	\sum_{i+j=3}|\bar{P}_{i-1}||\bar{W}_j| &= |\bar{P}_0||\bar{W}_2| + |\bar{P}_1||\bar{W}_1| + |\bar{W}_0||\bar{P}_2|\\
	&\leq |\bar{P}_1||\bar{W}_1| + D_1\big(|\bar{P}_0|+|\bar{W}_0|\big)\bigg(\frac{C^{2-\beta}}{2^3} + \frac{C}{2^2}\bigg)\\
	&\leq D_2\frac{C^{3-1-2\beta}}{3^3},
\end{align*}
for some constant $D_2>0$.

Now, for $M>3$ we have
\begin{align*}
	\sum_{i+j=M}|\bar{P}_{i-1}||\bar{W}_j| &= |\bar{P}_0||\bar{W}_{M-1}| + |\bar{P}_{M-1}||\bar{W}_0| + |\bar{P}_{M-2}||\bar{W}_1| + |\bar{P}_1||\bar{W}_{M-2}| + \sum_{\substack{i+j=M\\ i-1,j>1}}|\bar{P}_{i-1}||\bar{W}_j|\\
	&\leq \big(|\bar{P}_0|+|\bar{W}_0|\big)\frac{C^{M-1-\beta}}{(M-1)^3} + \big(|\bar{P}_1|+|\bar{W}_1|\big)\frac{C^{M-2-\beta}}{(M-2)^3} + \sum_{\substack{i+j=M\\ i-1,j>1}}|\bar{P}_{i-1}||\bar{W}_j|,
\end{align*}
where by \cite[Lemma B.1]{GuoHadzicJangSchrecker22}
\begin{align*}
	\sum_{\substack{i+j=M\\ i-1,j>1}}|\bar{P}_{i-1}||\bar{W}_j| &\leq D_1^2\sum_{\substack{i+j=M\\ i-1,j>1}}\bigg(\frac{C^{i-1-\beta}}{(i-1)^3} + \frac{C^{i-3}}{(i-1)^2}\bigg)\bigg(\frac{C^{j-\beta}}{j^3} + \frac{C^{j-2}}{j^2}\bigg)\\
	&= D_1^2\sum_{\substack{i+j=M-1\\ i,j>1}}\bigg(\frac{C^{i-\beta}}{i^3} + \frac{C^{i-2}}{i^2}\bigg)\bigg(\frac{C^{j-\beta}}{j^3} + \frac{C^{j-2}}{j^2}\bigg)\\
	&= D_1^2\sum_{\substack{i+j=M-1\\ i,j>1}}\frac{C^{M-1-2\beta}}{i^3j^3} + \frac{C^{M-\beta-3}}{i^3j^2} + \frac{C^{M-\beta-3}}{i^2j^3} + \frac{C^{M-5}}{i^2j^2}\\
	&\leq D_3\bigg(\frac{C^{M-1-2\beta}}{M^3} + \frac{2C^{M-\beta-3}}{M^2} + \frac{C^{M-5}}{M^2}\bigg),
\end{align*}
for some constant $D_3>0$. We therefore conclude the proof.
\end{proof}

With Lemmas~\ref{L:BracketBounds} and~\ref{L:QBound}, we can now close inductive estimates on the source terms $\mathcal{F}_M$ and $\mathcal{G}_M$. This is the content of the next lemma.

\begin{lemma}\label{L:FMGMBounds}
	Let $M>2$ and suppose that, for $C_0$ defined by Lemma~\ref{L:PWBounds}, there exist constants $C\geq C_0$ and $\beta\in(1,2)$ such that:
	\begin{align}\label{E:Inductive1}
		|\bar{\rho}_m| &\leq \frac{C^{m-\beta}}{m^3}, &
		|\bar{\om}_m| &\leq \frac{C^{m-\beta}}{m^3},
	\end{align}
	for each integer $1<m<M$. Then there exists a constant $D>0$, independent of $C$ and $\beta$, such that:
	\begin{align}
		|\mathcal{F}_M| &\leq D\frac{C^{M-1-\beta}}{M^2},\\
		|\mathcal{G}_M| &\leq D\frac{C^{M-1-\beta}}{M^2}.
	\end{align}
\end{lemma}

\begin{proof}
We prove the claimed estimate for $\mathcal{G}_M$ as the estimate for $\mathcal{F}_M$ follows similarly. Observe first that from Remark~\ref{R:det}(ii) with choice $M_0=5$ and~\eqref{E:GMDef}, the bound follows directly for $2\leq M\leq6$ by choosing $D$ sufficiently large as $\mathcal{G}_M$ is a continuous function of the coefficients $\rho_m$ and $\om_m$ with order $m\leq M-1$. It remains therefore to estimate in the case $M\geq7$.

We estimate directly from the definition of $\mathcal{G}_M$,~\eqref{E:GMDef},
\begin{equation}
\begin{aligned}
	\mathcal{G}_M &\leq 3\big|\bar{\om}_0[\bar{\om}^2]_M\big| + \big|[\bar{\om}^3]_M\big| - \Big|\Big(\ga-1-\frac{\al}{2}\Big)[\bar{\om}^2]_M\Big| + \bigg|\frac{2}{9}\Big(1-\frac{\al}{2}\Big)^2\frac{[\bar{\rho}\bar{\om}^2]_M}{\bar{\rho}_0\bar{\om}_0}\bigg|\\
	&+ \sum_{\substack{i+j=M\\ i< M-1}}(i+1)(n-2)\big|\bar{\om}_{i+1}(\bar{\om}^2)_{j-1}\big|+ \sum_{i+j+k=M}(i+1)\frac{(n-2)\ga}{(2-\ga)\al}\big|\bar{\om}_{i+1}\bar{\om}_{j-1}\bar{Q}_k\big|\\
	&+ \sum_{i=0}^{M-1}\big|\bar{\om}_i\bar{Q}_{M-i}\big| + \sum_{i=1}^{M-1} \frac{3\ga}{(2-\ga)\al}\big|(\bar{\om}^2)_i - \bar{\om}_0\bar{\om}_i\big|\big|\bar{Q}_{M-i}\big|,\label{E:GMBound}
\end{aligned}
\end{equation}
where we have used that $\bar{Q}_0=0$.
Applying Lemma~\ref{L:BracketBounds} three times, we see that the first line is bounded as
\begin{align}
	3\big|\bar{\om}_0[\bar{\om}^2]_M\big| + \big|[\bar{\om}^3]_M\big| - \Big|\Big(\ga-1-\frac{\al}{2}\Big)[\bar{\om}^2]_M\Big| + \bigg|\frac{2}{9}\Big(1-\frac{\al}{2}\Big)^2\frac{[\bar{\rho}\bar{\om}^2]_M}{\bar{\rho}_0\bar{\om}_0}\bigg| \leq D_1\frac{C^{M-1-\beta}}{M^3}.\label{E:GM1}
\end{align}
for some constant $D_1>0$. Considering next the first sum on the second line of~\eqref{E:GMBound}, we estimate this (recalling $M\geq7$) as
\begin{align*}
	\sum_{\substack{i+j=M\\ i<M-1}}(i+1)|\bar{\om}_{i+1}|\big|(\bar{\om}^2)_{j-1}\big| &= (M-1)|\bar{\om}_{M-1}|\big|(\bar{\om}^2)_1\big| + (M-2)|\bar{\om}_{M-2}|\big|(\bar{\om}^2)_2\big|+ |\bar{\om}_1|\big|(\bar{\om}^2)_{M-1}\big|\\
	&+ \sum_{\substack{i+j=M\\ 0<i<M-3}}(i+1)|\bar{\om}_{i+1}|\big|(\bar{\om}^2)_{j-1}\big|.
\end{align*}
Applying~\eqref{E:Inductive1} and Lemma~\ref{L:Quadratic}, we deduce the estimate
\begin{align}
	\sum_{\substack{i+j=M\\ i< M-1}}(i+1)|\bar{\om}_{i+1}|\big|(\bar{\om}^2)_{j-1}\big| &\leq D_2\frac{C^{M-1-\beta}}{(M-1)^2}+\sum_{i=1}^{M-4}\frac{C^{i+1-\beta}}{(i+1)^2}\frac{C^{j-1-\beta}}{(M-i-1)^3}\notag\\
	&\leq D_2\frac{C^{M-1-\beta}}{(M-1)^2}\bigg(1+\sum_{i=2}^{M-3}\frac{C^{1-\beta}}{i^2(M-i)^3}\bigg)\notag\\
	&\leq D_2\frac{C^{M-1-\beta}}{(M-1)^2},\label{E:GM2}
\end{align}
for some constant $D_2>0$ (possibly different on each line) and where we have applied \cite[Lemma B.1]{GuoHadzicJangSchrecker22} to estimate the final sum.

Considering next the second sum on the second line of~\eqref{E:GMBound}, we note that $\bar{P}_0$, $\bar{P}_1$, $\bar{W}_0$ and $\bar{W}_1$ are functions of $\bar{\rho}_0$, $\bar{\om}_0$, $\bar{\rho}_1$ and $\bar{\om}_1$ only, that is, not any higher order coefficients. Because of this, $\bar{Q}_1$ and $\bar{Q}_2$ are also only a function of these coefficients and thus can be bounded by a constant. We therefore rewrite the sum to isolate terms containing $\bar{\om}_k$ (for $k=0,1$) and $\bar{Q}_j$ (for $j=1,2$), obtaining
\begin{align*}
	&\sum_{i+j+k=M}(i+1)|\bar{\om}_{i+1}||\bar{\om}_{j-1}||\bar{Q}_k|\\
	&\leq \bar{\om}_0|\bar{\om}_1||\bar{Q}_{M-1}| + (|\bar{\om}_1||\bar{\om}_1|+2|\bar{\om}_2||\bar{\om}_0|)|\bar{Q}_{M-2}|\\
	&+ |\bar{Q}_2|\bigg((M-2)|\bar{\om}_{M-2}||\bar{\om}_0|+(M-3)|\bar{\om}_{M-3}||\bar{\om}_1|+|\bar{\om}_{M-3}||\bar{\om}_1|+\sum_{i=1}^{M-5}(i+1)|\bar{\om}_{i+1}||\bar{\om}_{M-i-1}|\bigg)\\
	&+ |\bar{Q}_1|\bigg((M-1)|\bar{\om}_{M-1}||\bar{\om}_0|+(M-2)|\bar{\om}_{M-2}||\bar{\om}_1|+|\bar{\om}_{M-2}||\bar{\om}_1|+\sum_{i=1}^{M-4}(i+1)|\bar{\om}_{i+1}||\bar{\om}_{M-i-1}|\bigg)\\
	&+ \sum_{k=3}^{M-3}|\bar{Q}_k|\Big(|\bar{\om}_1||\bar{\om}_{M-k-1}|+(M-k)|\bar{\om}_{M-k}||\bar{\om}_0|+(M-k-1)|\bar{\om}_{M-k-1}||\bar{\om}_1|\Big)\\
	&+ \sum_{k=3}^{M-3}|\bar{Q}_k|\sum_{i=1}^{M-k-3}(i+1)|\bar{\om}_{i+1}||\bar{\om}_{M-k-i-1}|.
\end{align*}
Applying the inductive assumption~\eqref{E:Inductive1} and Lemma~\ref{L:QBound}, we therefore obtain the bound
\begin{align*}
	\sum_{i+j+k=M}&(i+1)|\bar{\om}_{i+1}||\bar{\om}_{j-1}||\bar{Q}_k|\\
	&\leq D_3\frac{C^{M-1-\beta}}{(M-1)^2}+D_3\sum_{i=1}^{M-4}\frac{C^{i+1-\beta}}{(i+1)^2}\frac{C^{M-i-1-\beta}}{(M-i-1)^3}\\
	&+ \sum_{k=3}^{M-3}\frac{C^{k-1-2\beta}}{k^3}\bigg(\frac{C^{M-k-\beta}}{(M-k)^2}+\sum_{i=1}^{M-k-3}\frac{C^{i+1-\beta}}{(i+1)^2}\frac{C^{M-k-i-1-\beta}}{(M-k-i-1)^3}\bigg),
\end{align*}
for some constant $D_3>0$, where the last sum is non-empty only if $M-k-3\geq1$, that is, when $k\leq M-4$. From \cite[Lemma B.1]{GuoHadzicJangSchrecker22}, we estimate each of these sums, obtaining
\begin{align}
	\sum_{i+j+k=M}&(i+1)|\bar{\om}_{i+1}||\bar{\om}_{j-1}||\bar{Q}_k|\notag\\
	&\leq D_4\bigg(1+\frac{1}{C^{\beta-1}}\bigg)\frac{C^{M-1-\beta}}{(M-1)^2}+\sum_{k=3}^{M-3}\frac{C^{k-1-2\beta}}{k^3}\bigg(\frac{C^{M-k-\beta}}{(M-k)^2}+\frac{C^{M-k-2\beta}}{(M-k)^2}\bigg)\notag\\
	&\leq D_4\frac{C^{M-1-\beta}}{(M-1)^2}.\label{E:GM3}
\end{align}
for some constant $D_4>0$, possibly different on each line. To estimate the final two terms in~\eqref{E:GMBound}, we apply similar but simpler arguments using~\eqref{E:Inductive1} and Lemma~\ref{L:QBound} (as the estimates are purely quadratic), to conclude the bound
\begin{align}
	\sum_{i=0}^{M-1}\big|\bar{\om}_i\bar{Q}_{M-i}\big| + \sum_{i=1}^{M-1} \frac{3\ga}{(2-\ga)\al}\big|(\bar{\om}^2)_i - \bar{\om}_0\bar{\om}_i\big|\big|\bar{Q}_{M-i}\big|
	\leq D_5\frac{C^{M-1-\beta}}{(M-1)^2}\label{E:GM4}
\end{align}
for some constant $D_5>0$. Combining~\eqref{E:GM1}--\eqref{E:GM4}, we conclude the proof.
\end{proof}

\subsection{Proof of Local Existence Theorem}\label{S:LWPProof}

\begin{proof}[Proof of Theorem~\ref{T:LWP}]
Recalling the formal expansion~\eqref{E:TaylorxForm}, it is clear that the local convergence of the infinite series for $(\rho,\om)$ follows by showing that:
\begin{align}
	|\bar{\rho}_M| &\leq \frac{C^{M-\beta}}{M^3}, &
	|\bar{\om}_M| &\leq \frac{C^{M-\beta}}{M^3}.\label{E:Inductive2}
\end{align}
for some constants $C\geq1$, $\beta\in(1,2)$ and all $M\geq2$. We proceed by induction.

In the case $M=2$, we see from Lemma~\ref{L:Taylor} that $\bar{\rho}_2$ and $\bar{\om}_2$ are continuous functions of $\ga$ and $n$ for $\ga\in\big(\frac{19}{12},\frac{11}{6}\big)$ and thus bounded by a constant $D>0$. Therefore, given $\beta\in(1,2)$, we may choose $C\geq C_0$ sufficiently large, where $C_0$ is as in Lemma~\ref{L:PWBounds}, so that:
\begin{align*}
	|\bar{\rho}_2| &\leq \frac{C^{2-\beta}}{8}, &
	|\bar{\om}_2| &\leq \frac{C^{2-\beta}}{8}.
\end{align*}
Suppose now that $M\geq3$ and that estimates~\eqref{E:Inductive2} hold for $M=2,\ldots,M-1$. By Lemma~\ref{L:rhoMomegaMBounds}, we obtain the estimates:
\begin{align*}
	|\bar{\rho}_M| &\leq \frac{D}{M}\Big(|\mathcal{F}_M| + \frac{1}{M}|\mathcal{G}_M|\Big), &
	|\bar{\om}_M| &\leq \frac{D}{M}\Big(|\mathcal{G}_M| + \frac{1}{M}|\mathcal{F}_M|\Big).
\end{align*}
By the inductive assumption, we see that the assumptions of Lemma~\ref{L:FMGMBounds} hold, and thus we deduce:
\begin{align*}
	|\bar{\rho}_M| &\leq \frac{D}{C}\frac{C^{M-\beta}}{M}, &
	|\bar{\om}_M| &\leq \frac{D}{C}\frac{C^{M-\beta}}{M}.
\end{align*}
By choosing $C\geq\min\{C_0,D\}$, we conclude that the estimate holds for all $M\geq3$. Therefore the series~\eqref{E:TaylorxForm} converge absolutely for $|x|<\frac{1}{C}$.

As the coefficients in the series~\eqref{E:TaylorxForm} satisfy the recurrence relation~\eqref{E:Matrix}, within the radius of convergence, $|x|<\frac{1}{C}$, term by term differentiation and direct substitution of~\eqref{E:TaylorxForm} into the ODE system~\eqref{E:rhoPrime1}--\eqref{E:p} shows that the obtained functions are indeed analytic solutions.
\end{proof}

\section{Global Existence}

\subsection{Existence of Supersonic Collapse}

The aim of this section is to extend the local solution given by Theorem~\ref{T:LWP} globally to the right for all $y>0$. The core of this argument is a series of bootstrap assumptions, which we propagate via dynamical invariances of the flow. These assumptions are most conveniently stated in terms of quantities measuring the deviation of density and relative velocity from their initial values. To that end, we define:
\begin{align*}
	R(y) &:= \rho_0-\rho(y), & \Om(y) &:= \om(y)-\om_0.
\end{align*}

To prove global existence, we will show that on the maximal interval of existence the following inequalities are dynamically propagated:
\begin{align}
	\rho' &< 0,\label{E:B1}\\
	\rho &> 0,\label{E:B2}\\
	\om &> \om_0,\label{E:B3}\\
	y^2\om R &> m_0\frac{p}{\rho},\label{E:B4}\\
	\frac{n+1}{n-2}\rho_0\Om &> \om_0R,\label{E:B5}
\end{align}
where
\begin{align}
	m_0 = \frac{3(n+1)n}{2(\ga-1)(11-6\ga)}\rho_0\om_0\label{E:m0Def}
\end{align}
and we recall:
\begin{align*}
	\rho_0 &= \frac{1}{6\pi}, &
	\om_0 &= \frac{(3\ga-4)(2-\ga)}{n+3\ga-6}.
\end{align*}

\begin{lemma}\label{L:BootstrapInitial}
	Let $\ga\in\big(\frac{19}{12},\frac{11}{6}\big)$and $(\rho,\om)$ be the unique local-in-$y$ solution to the initial value problem~\eqref{E:rhoPrime1}--\eqref{E:omegaPrime1} given by Theorem~\ref{T:LWP}. Then there exists a $\de>0$ such that~\eqref{E:B1}--\eqref{E:B5} hold for $y\in(0,\de)$.
\end{lemma}

\begin{proof}
Starting with~\eqref{E:B1}--\eqref{E:B3}, we know from Theorem~\ref{T:LWP} and Lemma~\ref{L:Taylor} that:
\begin{align*}
	\rho(y) &= \rho_0 + \rho_{n-2}y^{n-2} + O_{y\to0}\big(y^{2(n-2)}\big),\\
	\om(y) &= \om_0 + \om_{n-2}y^{n-2} + O_{y\to0}\big(y^{2(n-2)}\big),
\end{align*}
where:
\begin{align*}
	\rho_{n-2} &= \bar{\rho}_1 < 0, & \om_{n-2} &= \bar{\om}_1 > 0.
\end{align*}
Therefore~\eqref{E:B1},~\eqref{E:B2} and~\eqref{E:B3} hold on an interval $(0,\de_1)$ for some $\de_1>0$. For~\eqref{E:B5}, we first observe from~\eqref{E:rho1omega1Alt} that
\begin{align*}
	\frac{n+1}{n-2}\rho_0\Om - \om_0R = \bigg(\frac{n+1}{n-2}\rho_0\bar{\om}_2 + \om_0\bar{\rho}_2\bigg)y^{2(n-2)} + o_{y\to0}\big(y^{2(n-2)}\big).
\end{align*}
Now, from~\eqref{E:rho2}--\eqref{E:omega2}, we see
\begin{align*}
	\frac{n+1}{n-2}\rho_0\bar{\om}_2 + \om_0\bar{\rho}_2 = \frac{9n(2n-1)S(n,\ga)-9n(n+1)T(n,\ga)}{4(\ga-1)^2(6\ga-7)(12\ga-19)(11-6\ga)^2}{\frac{\om_0}{\rho_0}}p_0^2 > 0,
\end{align*}
where the positivity follows from
\begin{align*}
	9n(2n-1)S(n,\ga)-9n(n+1)T(n,\ga) &= -18\ga(\ga-1)(11-6\ga) + 2n(90\ga^2 - 252\ga + 161)\\
	&- n^2(108\ga^3 - 234\ga^2 + 133) + n^3(6\ga-7)(12\ga-19) > 0
\end{align*}
for all $n\geq4$ and $\ga\in\big(\frac{19}{12},\frac{11}{6}\big)$. Thus there exists a $\de_2>0$ such that~\eqref{E:B5} holds on $(0,\de_2)$.

Finally, for~\eqref{E:B4} we see that
\begin{align*}
	&\frac{1}{y^2\om}\bigg(y^2\om R - m_0\frac{p}{\rho}\bigg)\\
	&= (\rho_0-\rho)-m_0y^{n-2}\rho^\ga(\rho\om)^{\frac{n}{3}-1}\\
	&= -\bar{\rho}_1y^{n-2} - \bar{\rho}_2y^{2(n-2)}\\
	&- m_0\rho_0^\ga(\rho_0\om_0)^{\frac{n}{3}-1}y^{n-2}\bigg(1+\ga\frac{\bar{\rho}_1}{\bar{\rho}_0}y^{n-2}\bigg)\bigg(1+\bigg(\frac{n}{3}-1\bigg)\bigg(\frac{\bar{\rho}_1}{\rho_0}+\frac{\bar{\om}_1}{\om_0}\bigg)y^{n-2}\bigg) + o_{y\to0}\big(y^{2(n-2)}\big)\\
	&= -\bar{\rho}_2y^{2(n-2)} - m_0\rho_0^\ga(\rho_0\om_0)^{\frac{n}{3}-1}\bigg(\ga\frac{\bar{\rho}_1}{\rho_0} + \bigg(\frac{n}{3}-1\bigg)\bigg(\frac{\bar{\rho}_1}{\rho_0}+\frac{\bar{\om}_1}{\om_0}\bigg)\bigg)y^{2(n-2)} + o_{y\to0}\big(y^{2(n-2)}\big)\\
	&= \left(-\bar{\rho}_2 + \bigg(\frac{3(n+1)n}{2(\ga-1)(11-6\ga)}\bigg)^2\bigg(\ga+\frac{n-3}{n+1}\bigg)\frac{p_0^2}{\rho_0}\right)y^{2(n-2)} + o_{y\to0}\big(y^{2(n-2)}\big),
\end{align*}
where we have applied~\eqref{E:rho1omega1} to see the vanishing of terms at order $y^{n-2}$, and it can be checked that
\begin{align*}
	-\bar{\rho}_2 + \bigg(\frac{3(n+1)n}{2(\ga-1)(11-6\ga)}\bigg)^2\bigg(\ga+\frac{n-3}{n+1}\bigg)\frac{p_0^2}{\rho_0} > 0
\end{align*}
for all $n\geq4$ and $\ga\in\big(\frac{19}{12},\frac{11}{6}\big)$. Thus there exists a $\de_3>0$ such that~\eqref{E:B4} holds on $(0,\de_3)$. Therefore we have that~\eqref{E:B1}--\eqref{E:B5} hold on the interval $(0,\de)$ for $\de=\min\{\de_1,\de_2,\de_3\}$.
\end{proof}

Before we prove that the assumptions~\eqref{E:B1}--\eqref{E:B5} are dynamically propagated, we collect several identities that will play an important role in the proof.

\begin{lemma}\label{L:Bootstrap}
	Consider the unique local-in-$y$ solution to the initial value problem~\eqref{E:rhoPrime1}--\eqref{E:omegaPrime1} given by Theorem~\ref{T:LWP} extended to its maximal interval of existence $[0,T)$. Then the following identities hold:
	\begin{align}
		\frac{d}{dy}\bigg(\frac{n+1}{n-2}\rho_0\Om-\om_0R\bigg) 
		&= -\bigg(\frac{\rho'}{\rho}+\frac{3}{y}\bigg)\bigg(\frac{n+1}{n-2}\rho_0\Om-\om_0R\bigg)\notag\\
		&+ \frac{3\om_0}{(n-2)y}\left(-\frac{y\rho'}{\rho}\bigg(\rho_0+\frac{2}{3}(n-2)R\bigg)-(n-2)R\right),\label{E:B5ID}\\
		\frac{d}{dy}\bigg(y^2\om R - m_0\frac{p}{\rho}\bigg)
		&= 2y\om R + y^2\om\bigg(\frac{3\om_0-3\om}{y\om}-\frac{\rho'}{\rho}\bigg) R - y^2\rho\om\frac{\rho'}{\rho}\notag\\
		&- m_0\bigg((\ga-1)\frac{\rho'}{\rho} + \frac{n}{3}\bigg(\frac{3\om_0-3\om}{y\om}\bigg) + \frac{n}{y}\bigg)\frac{p}{\rho},\label{E:B4ID}\\
		-y\frac{\rho'}{\rho} 
		&= -\frac{y^2}{G}\big(\mathcal{A}\Om + \mathcal{B}R + \mathcal{C}\Om^2 + \mathcal{D}R\Om + \mathcal{E}\big),\label{E:rhoPrimeID}
	\end{align}
	where:
	\begin{align}
		\mathcal{A} &:= 4\om_0 + \frac{(2-\ga)(n-6\ga+6)}{2(n+3\ga-6)} - \frac{4\pi}{3\om_0}\bigg(\frac{3(n-2)(2-\ga)}{2(n+3\ga-6)}\bigg)^2\rho_0,\\
		\mathcal{B} &:= \frac{4\pi}{3\om_0}\bigg(\frac{3(n-2)(2-\ga)}{2(n+3\ga-6)}\bigg)^2\om_0 - \frac{n\om_0}{m_0}\bigg(\frac{3(n-2)(2-\ga)}{2(n+3\ga-6)}\bigg)^2,\\
		\mathcal{C} &:= 2,\label{E:CDef}\\
		\mathcal{D} &:= \frac{4\pi}{3\om_0}\bigg(\frac{3(n-2)(2-\ga)}{2(n+3\ga-6)}\bigg)^2,\label{E:DDef}\\
		\mathcal{E} &:= \mathcal{E}(y;\rho,\om) = \bigg(\frac{3(n-2)(2-\ga)}{2(n+3\ga-6)}\bigg)^2\frac{n\om_0}{m_0y^2\om}\bigg(y^2\om R - m_0\frac{p}{\rho}\bigg).\label{E:EDef}
	\end{align}
	Moreover, the expression $\mathcal{A}$ is positive if and only if 
	\begin{align}
		\ga \geq \frac{10+n}{9},\label{E:10+n}
	\end{align}
	and furthermore, for any $\ga>\max\big\{\frac{19}{12},\frac{10+n}{9}\big\}$, as long as~\eqref{E:B5} holds weakly, we have the bound
	\begin{align}
		\mathcal{A}\Om + \mathcal{B}R \geq \frac{\om_0^2}{\rho_0}(n-2)R\label{E:AB}
	\end{align}
	with equality if and only if $\frac{n+1}{n-2}\rho_0\Om=\om_0R$.
\end{lemma}

\begin{remark}
	Unlike the requirement that $\ga\in\big(\frac{19}{12},\frac{11}{6}\big)$, which determines the local $(y\to0)$ sign of $\frac{n+1}{n-2}\rho_0\Om=\om_0R$, it is less clear what the significance of condition~\eqref{E:10+n} is. For $n=4$ this condition allows for $\ga$ in the full range $\big(\frac{19}{12},\frac{11}{6}\big)$, for $n=6$ this range is restricted and for any even integer $n\geq8$ there are clearly no values of $\ga<2$ such that $\ga\geq\frac{10+n}{9}$. Even though condition~\eqref{E:10+n} is essential to the proof of Lemma~\ref{L:Bootstrap}, this condition is likely to be a technicality of the method of proof, rather than a fundamental obstruction to the existence of smooth solutions for larger values of $n$.
\end{remark}

\begin{proof}
To prove~\eqref{E:B5ID} we use~\eqref{E:rhoPrime1}--\eqref{E:omegaPrime1} to get
\begin{align*}
	\frac{d}{dy}\bigg(\frac{n+1}{n-2}\rho_0\Om-\om_0R\bigg) &= \frac{n+1}{n-2}\rho_0\om' + \om_0\rho'\\
	&= \frac{n+1}{n-2}\rho_0\om\bigg(\frac{3\om_0-3\om}{y\om}-\frac{\rho'}{\rho}\bigg) + \om_0\rho\frac{\rho'}{\rho}\\
	&= \frac{1}{y}\left(-\bigg(\frac{n+1}{n-2}\rho_0\om - \om_0\rho\bigg)y\frac{\rho'}{\rho} - 3\frac{n+1}{n-2}\rho_0\Om\right).
\end{align*}
We now rewrite
\begin{align*}
	\frac{n+1}{n-2}\rho_0\om = \frac{n+1}{n-2}\rho_0(\om-\om_0) - \om_0R + \frac{n+1}{n-2}\rho_0\om_0 + \om_0R
\end{align*}
and similarly for the term $\frac{n+1}{n-2}\rho_0\Om$. A direct manipulation then gives~\eqref{E:B5ID}. The proof of~\eqref{E:B4ID} is a direct calculation based on~\eqref{E:rhoPrime1}--\eqref{E:omegaPrime1}.

To prove~\eqref{E:rhoPrimeID} we use~\eqref{E:rhoPrime1} to obtain
\begin{align}
	-y\frac{\rho'}{\rho} 
	&= -\frac{y(yh-q)}{G}\notag\\
	&= -\frac{y^2}{G}\bigg(2\om^2 + \Big(\ga-1-\frac{\al}{2}\Big)\om + \Big(\ga-1-\frac{\al}{2}\Big)(2-\ga) - \frac{4\pi}{3\om_0}\Big(1-\frac{\al}{2}\Big)^2\rho\om\bigg)\notag\\
	&+ \frac{y^2}{G}n\om_0\Big(1-\frac{\al}{2}\Big)^2\frac{p}{y^2\rho\om}\notag\\
	&= -\frac{y^2}{G}\bigg(2(\om+\om_0) + \Big(\ga-1-\frac{\al}{2}\Big) - \frac{4\pi}{3\om_0}\Big(1-\frac{\al}{2}\Big)^2\rho_0\bigg)\Om\notag\\
	&- \frac{y^2}{G}\frac{4\pi}{3\om_0}\Big(1-\frac{\al}{2}\Big)^2\om R + \frac{y^2}{G}n\om_0\Big(1-\frac{\al}{2}\Big)^2\frac{p}{y^2\rho\om}.\label{E:rhoPrimeIDAlt}
\end{align}
Recall that
\begin{align*}
	\al = \frac{n(3\ga-4)}{n+3\ga-6},
\end{align*}
so that:
\begin{align*}
	\ga-1-\frac{\al}{2} &= \frac{(2-\ga)(n-6\ga+6)}{2(n+3\ga-6)}, & 1-\frac{\al}{2} &= \frac{3(n-2)(2-\ga)}{2(n+3\ga-6)}.
\end{align*}
Then, upon regrouping~\eqref{E:rhoPrimeIDAlt} as a polynomial of $ \Om$ and $R$, we obtain~\eqref{E:rhoPrimeID}--\eqref{E:EDef}.

To prove the claim about the positivity of $\mathcal{A}$, we first observe that due to
\begin{align*}
	\om_0 = \frac{4-3\ga+\al}{3} = \frac{(3\ga-4)(2-\ga)}{n+3\ga-6},
\end{align*}
we easily check that
\begin{align*}
	\mathcal{A} = \frac{2-\ga}{2(3\ga-4)(n+3\ga-6)} \big(8(3\ga-4)^2 + (3\ga-4)(n-6\ga+6)- (n-2)^2\big).
\end{align*}
The expression in parentheses may be expressed as a quadratic polynomial in $z=3\ga-4$ of the form $q(3\ga-4)$ where
\begin{align*}
	q(z) = 6z^2 + (n-2)z - (n-2)^2 = \big(3z-(n-2)\big)(2z+n-2).
\end{align*}
As $\ga\in\big(\frac{19}{12},\frac{11}{6}\big)$, and thus $3\ga-4>0$, we see $\mathcal{A}\geq0$ if and only if $3\ga-4\geq\frac{n-2}{3}$. The latter condition is equivalent to~\eqref{E:10+n}.

For the proof of~\eqref{E:AB}, we first note that since $\ga>\max\big\{\frac{19}{12},\frac{10+n}{9}\big\}$ by the above, we know that $\mathcal{A}\geq0$. Therefore, applying the weak version of bootstrap assumption~\eqref{E:B5} in the form
\begin{align*}
	\Om \geq \frac{n-2}{n+1}\frac{\om_0}{\rho_0}R,
\end{align*}
gives us the lower bound
\begin{align*}
	\mathcal{A}\Om + \mathcal{B}R \geq \bigg( \frac{n-2}{n+1}\frac{\om_0}{\rho_0}\mathcal{A} + \mathcal{B}\bigg)R.
\end{align*}
Since
\begin{align*}
	\mathcal{A}\Om + \mathcal{B}R &= \frac{n-2}{n+1}\frac{\om_0}{\rho_0}\mathcal{A}R + \mathcal{B}R\\
	&= \frac{n-2}{n+1}\frac{\om_0}{\rho_0}\bigg(4\om_0 + \frac{(2-\ga)(n-6\ga+6)}{2(n+3\ga-6)} - \frac{4\pi}{3\om_0}\bigg(\frac{3(n-2)(2-\ga)}{2(n+3\ga-6)}\bigg)^2\rho_0\bigg)R\\
	&+ \bigg(\frac{4\pi}{3\om_0}\bigg(\frac{3(n-2)(2-\ga)}{2(n+3\ga-6)}\bigg)^2\om_0 - \frac{n\om_0}{m_0}\bigg(\frac{3(n-2)(2-\ga)}{2(n+3\ga-6)}\bigg)^2\bigg)R\\
	&= (n-2)\frac{\om_0^2}{\rho_0}\bigg(\frac{4}{n+1} + \frac{n-6\ga+6}{2(n+1)(3\ga-4)} - \frac{(n-2)^2}{2(n+1)(3\ga-4)^2}\bigg)R\\
	&+ (n-2)\frac{\om_0^2}{\rho_0}\bigg(\frac{n-2}{2(3\ga-4)^2} - \frac{3(n-2)(\ga-1)(11-6\ga)}{2(n+1)(3\ga-4)^2}\bigg)R\\
	&= (n-2)\frac{\om_0^2}{\rho_0}R,
\end{align*}
we obtain~\eqref{E:AB} and the proof is complete.
\end{proof}

\begin{lemma}
	For any $x\in[0,1]$, any $\ga\in\big(\frac{4}{3},2\big)$ and any even $n\geq4$, we have the inequality
	\begin{align}
		-\frac{(n-2)^2\ga}{n+1}x^2 + (n-2)\bigg(\frac{2}{3}n-\ga\bigg)x + n \geq 0.\label{E:QuadInq}
	\end{align}
\end{lemma}

\begin{proof}
The proof is elementary. By the quadratic formula it is easy to check that one root is strictly negative. By a direct calculation, one checks that the quadratic polynomial is positive at $x=0$ and $x=1$, and therefore bound~\eqref{E:QuadInq} must hold.
\end{proof}

\begin{lemma}\label{L:Free}
	Consider the unique local-in-$y$ solution to the initial value problem~\eqref{E:rhoPrime1}--\eqref{E:omegaPrime1} given by Theorem~\ref{T:LWP} extended smoothly to its maximal interval of existence $[0,T)$. If~\eqref{E:B2} and~\eqref{E:B3} hold on $[0,T)$, then	for all $y\in[0,T)$, we have
	\begin{align}
		\om_0R > \rho_0\Om - R\Om.\label{E:Free}
	\end{align}
\end{lemma}

\begin{proof}
We first note that, by Lemma~\ref{L:BootstrapInitial}, there exists a $\de>0$ such that~\eqref{E:B5} holds on $[0,\de]$ and therefore so does the weaker inequality~\eqref{E:Free}. We next observe
\begin{align*}
	\om_0R - \rho_0\Om + R\Om = -\rho\om + \rho_0\om_0,
\end{align*}
so that, computing the derivative,
\begin{align*}
	\frac{d}{dy}\big(\om_0R-\rho_0\Om+R\Om\big) = -(\rho\om)' = -\rho\om\bigg(\frac{\rho'}{\rho}+\frac{\om'}{\om}\bigg) = -\rho\om\bigg(\frac{3\om_0-3\om}{y\om}\bigg) \geq 0,
\end{align*}
where we have applied~\eqref{E:B2}--\eqref{E:B3} in the last inequality. As~\eqref{E:Free} holds at $\de>0$, it therefore holds on $[0,T)$.
\end{proof}

\begin{lemma}\label{L:Supersonic}
	Let $n=4,6$ and $\ga>\max\big\{\frac{19}{12},\frac{10+n}{9}\big\}$. Consider the unique local-in-$y$ solution to the initial value problem~\eqref{E:rhoPrime1}--\eqref{E:omegaPrime1} given by Theorem~\ref{T:LWP} extended smoothly to its maximal interval of existence $[0,T)$. If the bootstrap assumptions~\eqref{E:B1}--\eqref{E:B5} hold weakly on $[0,T)$, then the flow remains uniformly supersonic. In particular, for any $\de>0$, there exists a $g_0(n,\ga,\de)>0$ such that, for $y\in[\de,T)$,
	\begin{align}
		G(y;\rho,\om) \leq -g_0y^2\om.\label{E:GUpper}
	\end{align}
\end{lemma}

\begin{proof}
We expand $G$ from~\eqref{E:GDef} and apply the weak inequality~\eqref{E:B4} to obtain 
\begin{align}
	-G &= y^2\om^2 - \ga\Big(1-\frac{\al}{2}\Big)^2\frac{p}{\rho}\notag\\
	&\geq y^2\om^2 - \frac{\ga}{m_0}\Big(1-\frac{\al}{2}\Big)^2y^2\om R\notag\\
	&= y^2\om^2 - \frac{2\ga(\ga-1)(11-6\ga)}{3(n+1)n\om_0\rho_0}\Big(1-\frac{\al}{2}\Big)^2y^2\om R\notag\\
	&= y^2\om \bigg(\om - \frac{3(n-2)^2\ga(\ga-1)(11-6\ga)}{2(n+1)n(3\ga-4)^2}\frac{\om_0}{\rho_0}R\bigg)\notag\\
	&\geq \frac{2n-1}{n+1}\bigg(1- \frac{3(n-2)^2\ga(\ga-1)(11-6\ga)}{2(2n-1)n(3\ga-4)^2}\bigg)\frac{\om_0}{\rho_0}y^2\om R,\label{E:-GBound}
\end{align}
where we have used the bound
\begin{align*}
	\om = \om_0 + \Om \geq \frac{2n-1}{n+1}\frac{\om_0}{\rho_0}R,
\end{align*}
which follows from the bootstrap bound~\eqref{E:B5} and the fact that $R\leq\rho_0$ due to~\eqref{E:B1}. It is straightforward to check that
\begin{align*}
	1- \frac{3(n-2)^2\ga(\ga-1)(11-6\ga)}{2n(2n-1)(3\ga-4)^2} > 0
\end{align*}
for $n=4,6$ and $\ga>\max\big\{\frac{19}{12},\frac{10+n}{9}\big\}$. To conclude the proof, we note that by Lemma~\ref{L:BootstrapInitial} and~\eqref{E:B1}, given $\de>0$, there exists a $\varrho_0(\de)>0$ such that for $y\in[\de,T)$, we have $R\geq\varrho_0$. This implies the existence of $g_0$ as stated in~\eqref{E:GUpper}.
\end{proof}

\begin{lemma}[Upper bound on $\om$]\label{L:omegaUpper}
	Let $n=4,6$ and $\ga>\max\big\{\frac{19}{12},\frac{10+n}{9}\big\}$. Consider the unique local-in-$y$ solution to the initial value problem~\eqref{E:rhoPrime1}--\eqref{E:omegaPrime1} given by Theorem~\ref{T:LWP} extended smoothly to its maximal interval of existence $[0,T)$. If the bootstrap assumptions~\eqref{E:B1}--\eqref{E:B5} hold weakly on $[0,T)$, then there exists a constant $\om_{\text{upper}}<\infty$, depending only on $\ga$ and $n$, such that
	\begin{align*}
		\sup_{y\in[0,T)}\om(y) < \om_{\text{upper}}.
	\end{align*}
\end{lemma}

\begin{proof}
From the weak forms of~\eqref{E:B2}--\eqref{E:B3}, we see that $\rho,\om\geq0$. Thus $q\geq0$ and so
\begin{align*}
	yh-q \leq y\bigg(2\om^2+\Big(\ga-1-\frac{\al}{2}\Big)\om + \Big(\ga-1-\frac{\al}{2}\Big)(2-\ga)\bigg).
\end{align*}
We then use~\eqref{E:omegaPrime1},~\eqref{E:hDef}--\eqref{E:qDef} and $G<0$ from Lemma~\ref{L:Supersonic} to obtain the upper bound
\begin{align}
	\om' < -\frac{3(\om-\om_0)}{y} - \frac{y\om}{G}\bigg(2\om^2 + \Big(\ga-1-\frac{\al}{2}\Big)\om + \Big(\ga-1-\frac{\al}{2}\Big)(2-\ga)\bigg).\label{E:omegaPrimeUpper}
\end{align}
On the other hand, from the penultimate line of~\eqref{E:-GBound} we have:
\begin{align*}
	-G &> y^2\om \big(\om-c_1(n,\ga)\big), & c_1(n,\ga) &:= \frac{3(n-2)^2\ga(\ga-1)(11-6\ga)}{2(n+1)n(3\ga-4)^2}\om_0,
\end{align*}
where we have used the bound $R\leq\rho_0$. Inserting this into~\eqref{E:omegaPrimeUpper}, we conclude
\begin{align*}
	y\om' < -3(\om-\om_0) + \frac{1}{\om-c_1(n,\ga)}\bigg(2\om^2 + \Big(\ga-1-\frac{\al}{2}\Big)\om + \Big(\ga-1-\frac{\al}{2}\Big)(2-\ga)\bigg),
\end{align*}
which then easily implies that there exists a $(n,\ga)$-dependent finite upper bound on $\om(y)$ for all $y\in[0,T)$.
\end{proof}

\begin{theorem}\label{T:GE}
	Let $n=4,6$ and $\ga>\max\big\{\frac{19}{12},\frac{10+n}{9}\big\}$. Consider the unique local-in-$y$ solution to the initial value problem~\eqref{E:rhoPrime1}--\eqref{E:omegaPrime1} given by Theorem~\ref{T:LWP} on its maximal interval of existence $[0,T)$. Let $0<\mathcal{T}\leq T$ be the maximal time $y=\mathcal{T}$, such that~\eqref{E:B1}--\eqref{E:B5} hold on $[0,\mathcal{T})$, that is,
	\begin{align*}
		\mathcal{T} := \sup_{y\in[0,T)}\{\text{\emph{bounds} } \eqref{E:B1}-\eqref{E:B5} \text{ \emph{hold on} } [0,y)\}.
	\end{align*}
	Then $T=\mathcal{T}=\infty$.
\end{theorem}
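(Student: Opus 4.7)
My plan is to run a standard continuation bootstrap argument: by Lemma~\ref{lemma:bootstrapinitial} the bounds \eqref{E:B0}--\eqref{E:B3} hold on an initial interval, so $\mathcal{T} > 0$. Suppose for contradiction that $\mathcal{T} < T$; by continuity all five inequalities hold weakly on $[0,\mathcal{T}]$ and at least one attains equality at $y = \mathcal{T}$. I would first dispatch the simpler cases. For \eqref{E:B0.25}, Lemma~\ref{L:SUPERSONIC} ensures $|(yh-q)/G|$ stays bounded on $[0,\mathcal{T}]$, so integration of the ODE for $\rho$ yields $\rho(\mathcal{T}) = \rho_0 \exp\!\big(\int_0^{\mathcal{T}}(yh-q)/G\, dy\big) > 0$. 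For \eqref{E:B0.5}, if $\omega(\mathcal{T}) = \omega_0$ then $\Omega(\mathcal{T}) = 0$, and the weak form of \eqref{E:B3} forces $R(\mathcal{T}) \le 0$, in contradiction with the strict decrease of $\rho$ given by \eqref{E:B0} on $(0,\mathcal{T})$. For \eqref{E:B0} itself, identity \eqref{E:RHOPRIMEID} combined with the lower bound \eqref{E:AB} gives $\mathcal{A}\Omega + \mathcal{B}R \ge (n-2)\omega_0^2 R/\rho_0 > 0$; the terms $\mathcal{C}\Omega^2$ and $\mathcal{D}R\Omega$ are manifestly non-negative and $\mathcal{E} \ge 0$ by the weak form of \eqref{E:B1}, so since $G < 0$ strictly by Lemma~\ref{L:SUPERSONIC}, we obtain $\rho'(\mathcal{T}) < 0$ and equality in \eqref{E:B0} is ruled out.

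The heart of the argument concerns \eqref{E:B1} and \eqref{E:B3}, for which the identities \eqref{E:BTHREEID} and \eqref{E:BONEID} have been designed. For \eqref{E:B3}, the prefactor of the first term on the right-hand side of \eqref{E:BTHREEID} vanishes at equality, and the task reduces to showing
$$-\frac{\mathcal{T}\rho'(\mathcal{T})}{\rho(\mathcal{T})}\Big(\rho_0 + \frac{2(n-2)}{3}R(\mathcal{T})\Big) > (n-2)R(\mathcal{T}).$$
I would insert the lower bound for $-y\rho'/\rho$ coming from \eqref{E:RHOPRIMEID}, \eqref{E:AB}, and the non-negative contributions of $\mathcal{C}\Omega^2$, $\mathcal{D}R\Omega$, $\mathcal{E}$, then combine this with $G = \gamma(1-\alpha/2)^2 p/\rho - y^2\omega^2$ and the equality relation $\Omega = \frac{n-2}{n+1}\omega_0 R/\rho_0$; the upshot is a polynomial inequality in $R, \omega, p/\rho$ whose positivity may be verified with help of Lemma~\ref{LemmaE} (which bounds $\rho\omega$ from above) and the weak form of \eqref{E:B1} within the specified $(\gamma, n)$ range. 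The case \eqref{E:B1} would be handled analogously via \eqref{E:BONEID}: at equality, each summand is signed via $\rho' < 0$ (just proved), $\omega > \omega_0$, and direct estimates, yielding strict positivity of the derivative and contradicting maximality.

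Having shown all five bounds propagate strictly at $\mathcal{T}$, the maximality of $\mathcal{T}$ is contradicted and $\mathcal{T} = T$ follows. For the global extension, the bootstraps together with Lemma~\ref{L:SUPERSONIC} give $\rho \in (0, \rho_0)$, $\omega$ bounded above by $\omega_{\text{upper}}$, and $|G| \gtrsim y^2\omega$ uniformly on any compact $[\delta, T)$; hence the right-hand sides of \eqref{E:RHOPRIME}--\eqref{E:OMEGAPRIME} stay bounded on compact subsets and standard ODE continuation yields $T = \infty$. The principal obstacle, as indicated above, is the algebraic sign verification for \eqref{E:B3}: identity \eqref{E:BTHREEID} is engineered precisely so that the dominant term cancels at equality, but the sign of the residual depends delicately on the technical bound \eqref{E:AB} (which in turn forces $\gamma > \tfrac{10+n}{9}$ and restricts us to $n \in \{4,6\}$) together with the supersonic bound, whose proof itself invoked \eqref{E:B1}. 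Untangling the mutual coupling between \eqref{E:B1} and \eqref{E:B3}, while keeping careful track of which inequalities are available strictly versus weakly at $\mathcal{T}$, is the technical heart of the proof.
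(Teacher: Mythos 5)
Your outline is structurally faithful to the paper's proof: it runs the same bootstrap/continuation argument, identifies $\mathcal T$ as the first possible failure time, and correctly singles out the two key identities \eqref{E:BTHREEID} and \eqref{E:BONEID} (together with \eqref{E:RHOPRIMEID} and the lower bound \eqref{E:AB}) as the engine that rules out equality in \eqref{E:B3} and \eqref{E:B1} at $y=\mathcal T$. Your handling of \eqref{E:B0.25} and \eqref{E:B0} matches the paper (the paper in fact leaves \eqref{E:B0} implicit, so you are supplying a step they omit), and your argument for \eqref{E:B0.5} --- that $\Omega(\mathcal T)=0$ together with weak \eqref{E:B3} would force $R(\mathcal T)\le 0$, contradicting $R(\mathcal T)>0$ --- is a clean alternative to the paper's monotonicity argument $(y^3\Omega)'\geq 0$ and is equally valid.

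What is genuinely missing, and what you correctly flag as the ``technical heart,'' is the explicit sign verification. After substituting \eqref{E:RHOPRIMEID} and the equality form of \eqref{E:AB} at $y=\mathcal T$, the paper expands the relevant quantity as a cubic polynomial in $R$ and $\Om$ and must show that it is strictly positive. This requires (for \eqref{E:B3}, the paper's Step 2) the auxiliary quadratic bound \eqref{E:QUADRBOUND} applied to $x=R/\rho_0\in[0,1]$, and (for \eqref{E:B1}, Step 3) the decomposition into $Q_1,Q_2,Q_3$, the use of Lemma~\ref{LemmaE}, the inequality $\left(\frac{n+1}{n-2}\rho_0\Om-\om_0 R\right)^2\ge 0$, and the explicit positivity of $q_2(\ga,n)$ and $q_3(\ga,n)$ --- verifications that involve computing $\partial_\ga q_i$ and $\partial_n q_i$ and checking the worst corner of the parameter box. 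This algebra is the bulk of the paper's proof of this theorem; your proposal is a correct skeleton with the load-bearing part deferred. One small misattribution: you propose using Lemma~\ref{LemmaE} to help with \eqref{E:B3}; in the paper it is invoked in the \eqref{E:B1} step, whereas the \eqref{E:B3} step uses \eqref{E:QUADRBOUND}. Finally, the restriction $n\in\{4,6\}$ is not solely a consequence of \eqref{E:AB}: it also enters through Lemma~\ref{L:SUPERSONIC}, whose uniform supersonicity bound requires checking $1-\tfrac{3(n-2)^2\ga(\ga-1)(11-6\ga)}{2n(2n-1)(3\ga-4)^2}>0$ for those values of $n$.
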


\begin{proof}
\emph{Step 1: Show~\eqref{E:B2}--\eqref{E:B3}.}\\
From~\eqref{E:omegaPrime1}, we see that for $y\in(0,\mathcal{T})$,
\begin{align*}
	(y^3\Om)' = -y^3\om\frac{\rho'}{\rho} \geq 0,
\end{align*}
where $\rho'\leq 0$ on $(0,\mathcal{T})$ by definition of $\mathcal{T}$ and~\eqref{E:B1}. Thus, from Lemma~\ref{L:BootstrapInitial}, it easily follows that
\begin{align*}
	y^3\Om \geq \de^3\Om(\de) > 0
\end{align*}
for $\de>0$ sufficiently small. Furthermore, from~\eqref{E:rhoPrime1}, we see that
\begin{align*}
	(\log\rho)' = \frac{yh-q}{G}.
\end{align*}
By definition of $\mathcal{T}$ and Lemma~\ref{L:Supersonic}, given $\de>0$, there exists an $\epsilon>0$ such that $G\leq-\epsilon y^2<0$ on $[\de,\mathcal{T})$. Then, as $\om(y)\in[\om_0,\om_{\text{upper}}]$ from Lemma~\ref{L:omegaUpper} and $\rho(y)\in[0,\rho_0]$, we see that there exists an $M>0$ such that $|h|\leq M$ on $\big[\frac{1}{2}\mathcal{T},\mathcal{T}\big]$ and $q\geq0$. In particular, 
\begin{align*}
	(\log\rho)' \geq -\frac{M}{\epsilon y},
\end{align*}
and therefore, for any finite $\de\leq y_0\leq\mathcal{T}$, there exists a $\bar{\varrho}(y_0)>0$ such that $\rho\geq\bar{\varrho}>0$ on $[\de,y_0]$.\\

\noindent\emph{Step 2: Prove that~\eqref{E:B5} is propagated by the flow.}\\
To this end, let us assume by way of contradiction that the strict inequality in~\eqref{E:B5} fails at $y=\mathcal{T}<\infty$, that is,
\begin{align}
	\bigg[\frac{n+1}{n-2}\rho_0\Om-\om_0R\bigg]_{y=\mathcal{T}} = 0.\label{E:B5Contra1}
\end{align}
From the minimality of $\mathcal{T}$ and the identity~\eqref{E:B5ID}, it follows that the right-most term on the right-hand side of~\eqref{E:B5ID} is non-positive at $y=\mathcal{T}$. Multiplying by $-\frac{G}{y^2}$ we conclude that
\begin{align}
	\bigg[\bigg(\rho_0+\frac{2}{3}(n-2)R\bigg)\left(-\frac{G}{y^2}\right)\left(-y\frac{\rho'}{\rho}\right) - (n-2)\left(-\frac{G}{y^2}\right)R \bigg]_{y=\mathcal{T}} \leq 0.\label{E:B5Contra2}
\end{align}
Using~\eqref{E:GDef} and expanding $\om^2$ in the form
\begin{align*}
	\om^2 = \Om^2 + 2\Om + \om_0^2,
\end{align*}
we rewrite the second summand in~\eqref{E:B5Contra2} in the form
\begin{align}
	\left(-\frac{G}{y^2}\right)R &= \om^2R - \ga\Big(1-\frac{\al}{2}\Big)^2\frac{p}{y^2\rho} R\notag\\
	&= \om_0^2R - \ga\Big(1-\frac{\al}{2}\Big)^2\frac{p}{y^2\rho} R + 2\om_0R \Om + R\Om^2.\label{E:B5Contra3}
\end{align}
We also note that, due to~\eqref{E:B5Contra1}, from~\eqref{E:AB}, at $y=\mathcal{T}$,
\begin{align*}
	\mathcal{A}\Om + \mathcal{B}R = (n-2)\frac{\om_0^2}{\rho_0}R.
\end{align*}
Therefore, using~\eqref{E:rhoPrimeID},~\eqref{E:B5Contra3} and recalling the identity
\begin{align*}
	1-\frac{\al}{2} = \frac{3(n-2)(2-\ga)}{2(n+3\ga-6)},
\end{align*}
we may rewrite the left-hand side of~\eqref{E:B5Contra2} in the form
\begin{align}
	&\bigg[\bigg(\rho_0+\frac{2}{3}(n-2)R\bigg)\left(-\frac{G}{y^2}\right)\left(-y\frac{\rho'}{\rho}\right) - \left(-\frac{G}{y^2}\right)(n-2)R \bigg]_{y=\mathcal{T}}\notag\\
	&= \bigg(\rho_0+\frac{2}{3}(n-2)R\bigg)\bigg((n-2)\frac{\om_0^2}{\rho_0}R + \mathcal{C}\Om^2 + \mathcal{D}R\Om + \mathcal{E}\bigg)\notag\\
	&- (n-2)\om_0^2R + (n-2)\ga\Big(1-\frac{\al}{2}\Big)^2\frac{p}{y^2\rho}R - 2(n-2)\om_0R\Om - (n-2)R\Om^2\notag\\
	&= \bigg(\rho_0+\frac{2}{3}(n-2)R\bigg)\Big(1-\frac{\al}{2}\Big)^2\frac{n\om_0}{m_0y^2\om}\bigg(y^2\om R - m_0\frac{p}{\rho}\bigg) + (n-2)\ga\Big(1-\frac{\al}{2}\Big)^2\frac{p}{y^2\rho} R\notag\\
	&+ \frac{2}{3}(n-2)^2\frac{\om_0^2}{\rho_0}R^2 + \rho_0\mathcal{C}\Om^2 + \big(\rho_0\mathcal{D}- 2(n-2)\om_0\big)R\Om\notag\\
	&+ (n-2)\bigg(\frac{2}{3}\mathcal{C}-1\bigg) R \Om^2+\frac{2}{3} (n-2)\mathcal{D}R^2 \Om,\label{E:B5Contra4}
\end{align}
where all functions are evaluated at $y=\mathcal{T}$. To simplify the notation for the analysis of this expression, we name the first line of the right hand side by defining
\begin{align*}
	K := \bigg(\rho_0+\frac{2}{3}(n-2)R\bigg)\Big(1-\frac{\al}{2}\Big)^2\frac{n\om_0}{m_0y^2\om}\bigg(y^2\om R-m_0\frac{p}{\rho}\bigg) + (n-2)\ga\Big(1-\frac{\al}{2}\Big)^2\frac{p}{y^2\rho}R.
\end{align*}
We rewrite the expression $K$ in the form
\begin{equation}
\begin{aligned}
	K &= \Big(1-\frac{\al}{2}\Big)^2\frac{1}{m_0y^2\om}\bigg(n\om_0\bigg(\rho_0+\frac{2}{3}(n-2)R\bigg)- (n-2)\ga\om R\bigg)\bigg(y^2\om R - m_0\frac{p}{\rho}\bigg)\\
	&+ \Big(1-\frac{\al}{2}\Big)^2\frac{(n-2)\ga}{m_0}\om R^2.\label{E:K1}
\end{aligned}
\end{equation}
By~\eqref{E:B5Contra1}, we have 
\begin{align}
	\Om = \frac{n-2}{n+1}\frac{\om_0}{\rho_0} R,\quad y=\mathcal{T}.\label{E:B5Contra1Alt}
\end{align}
Using this and the decomposition $\om=\om_0+\Om$, we conclude that 
\begin{align*}
	&n\om_0\bigg(\rho_0+\frac{2}{3}(n-2)R\bigg) - (n-2)\ga\om R\\
	&= n\om_0\rho_0 + (n-2)\om_0\bigg(\frac{2}{3}n-\ga\bigg)R - \frac{(n-2)^2\ga}{n+1}\frac{\om_0}{\rho_0}R^2\\
	&= \om_0\rho_0\left(-\frac{(n-2)^2\ga}{n+1}\frac{R^2}{\rho_0^2} + (n-2)\bigg(\frac{2}{3}n-\ga\bigg) \frac{R}{\rho_0} + n\right) \geq 0,\quad y=\mathcal{T},
\end{align*}
where we have crucially used the bound~\eqref{E:QuadInq} and the fact that $\frac{R}{\rho_0}\in[0, 1]$ since $0<\rho\leq\rho_0$ by the bootstrap assumptions~\eqref{E:B1}--\eqref{E:B2}. We feed this back into~\eqref{E:K1}, again decomposing $\om=\om_0+\Om$, and together with the bootstrap assumption~\eqref{E:B4}, to conclude that 
\begin{align}
	K \geq \Big(1-\frac{\al}{2}\Big)^2\frac{(n-2)\ga}{m_0}\om_0R^2 + \Big(1-\frac{\al}{2}\Big)^2\frac{(n-2)\ga}{m_0}\Om R^2.\label{E:K2}
\end{align}
Using~\eqref{E:K2} in~\eqref{E:B5Contra4}, we obtain the bound
\begin{align}
	&\bigg[\bigg(\rho_0+\frac{2}{3}(n-2)R\bigg)\left(-\frac{G}{y^2}\right)\left(-y\frac{\rho'}{\rho}\right) - (n-2)\left(-\frac{G}{y^2}\right)R \bigg]_{y=\mathcal{T}}\notag\\
	&\geq \bigg((n-2)\ga\Big(1-\frac{\al}{2}\Big)^2\frac{\om_0}{m_0}+\frac{2}{3}(n-2)^2\frac{\om_0^2}{\rho_0}\bigg)R^2 + \rho_0\mathcal{C}\Om^2 + \big(\rho_0\mathcal{D}-2(n-2)\om_0\big)R\Om\label{E:B5Contra5}\\
	&+ (n-2)\bigg(\frac{2}{3}\mathcal{C}-1\bigg)R\Om^2 + \bigg(\frac{2}{3}(n-2)\mathcal{D} + \Big(1-\frac{\al}{2}\Big)^2\frac{(n-2)\ga}{m_0}\bigg)R^2\Om,\quad y=\mathcal{T}.\notag
\end{align}
Since $\mathcal{C}=2$ and $\mathcal{D}>0$ by~\eqref{E:CDef}--\eqref{E:DDef}, the third-order polynomials in~\eqref{E:B5Contra5} are all strictly positive. Therefore, using~\eqref{E:B5Contra1Alt} we have the lower bound
\begin{align*}
	&\bigg[\bigg(\rho_0+\frac{2}{3}(n-2)R\bigg)\left(-\frac{G}{y^2}\right)\left(-y\frac{\rho'}{\rho}\right) - (n-2)\left(-\frac{G}{y^2}\right)R \bigg]_{y=\mathcal{T}}\\
	&\geq \bigg((n-2)\ga\Big(1-\frac{\al}{2}\Big)^2\frac{\om_0}{m_0}+\frac{2}{3}(n-2)^2\frac{\om_0^2}{\rho_0} + 2 \frac{(n-2)^2}{(n+1)^2}\frac{\om_0^2}{\rho_0} + \big(\rho_0\mathcal{D}-2(n-2)\om_0\big)\frac{n-2}{n+1}\frac{\om_0}{\rho_0}\bigg)R^2\\
	&= \bigg((n-2)\ga\Big(1-\frac{\al}{2}\Big)^2\frac{\om_0}{m_0} + \frac{n-2}{n+1}\om_0\mathcal{D} + (n-2)^2\bigg(\frac{2}{3}-\frac{2n}{(n+1)^2}\bigg)\frac{\om_0^2}{\rho_0}\bigg)R^2 > 0,\quad y=\mathcal{T},
\end{align*}
which gives contradiction to~\eqref{E:B5Contra2}.\\

\noindent\emph{Step 3: Prove that~\eqref{E:B4} is propagated by the flow.}\\
Similar to Step 2, let us assume for contradiction that the strict inequality in~\eqref{E:B4} fails at $y=\mathcal{T}<\infty$, that is,
\begin{align}
	\bigg[y^2\om R - m_0\frac{p}{\rho}\bigg]_{y=\mathcal{T}} = 0.\label{E:B4Contra1}
\end{align}
Using~\eqref{E:B4Contra1} in~\eqref{E:B4ID} we obtain
\begin{align}
	\frac{d}{dy}\bigg(y^2\om R - m_0\frac{p}{\rho}\bigg) 
	&= 2y\om R + y^2\om\bigg(\frac{3\om_0-3\om}{y\om}-\frac{\rho'}{\rho}\bigg)R - y^2\rho\om\frac{\rho'}{\rho}\notag\\
	&- y^2\om\bigg((\ga-1)\frac{\rho'}{\rho} + \frac{n}{3}\bigg(\frac{3\om_0-3\om}{y\om}\bigg) + \frac{n}{y}\bigg) R\notag\\
	&= -\rho_0y^2\om\frac{\rho'}{\rho} - (n-2)y\om R\notag\\
	&+ y\bigg((n-3)\Om-(\ga-1)y\om\frac{\rho'}{\rho}\bigg)R,\quad y=\mathcal{T}.\label{E:B4Contra2}
\end{align}
From the minimality of $\mathcal{T}$, the right-hand side of~\eqref{E:B4Contra2} is non-positive, which upon multiplication by $-\frac{G}{y^3\om}$ implies the bound
\begin{align*}
	\mathcal{M} := \left(-\frac{G}{y^2}\right)\bigg(\big(\rho_0+(\ga-1)R\big)\left(-y\frac{\rho'}{\rho}\right) - (n-2)R + \frac{n-3}{\om}R\Om\bigg) \leq 0,\quad y=\mathcal{T}.
\end{align*}
Note that by~\eqref{E:GDef} and~\eqref{E:B4Contra1} we have the identity
\begin{align}
	-\frac{G}{y^2} &= \om^2 - \frac{\ga}{m_0}\Big(1-\frac{\al}{2}\Big)^2\om R\notag\\
	&= \om_0^2 + 2\om_0\Om - \frac{\ga}{m_0}\Big(1-\frac{\al}{2}\Big)^2\om_0R + \Om^2 - \frac{\ga}{m_0}\Big(1-\frac{\al}{2}\Big)^2\Om R\label{E:B4Contra3}.
\end{align}
We now use~\eqref{E:rhoPrimeID} and~\eqref{E:B4Contra3} to expand $\mathcal{M}$ into polynomial powers of $R$ and $\Om$. We thus obtain
\begin{align}
	\mathcal{M} &= \big(\rho_0+(\ga-1)R\big) \big(\mathcal{A}\Om+\mathcal{B}R+\mathcal{C}\Om^2+\mathcal{D}R\Om+\mathcal{E}\big)\notag\\
	&- (n-2)R\bigg(\om_0^2+2\om_0\Om-\frac{\ga}{m_0}\Big(1-\frac{\al}{2}\Big)^2\om_0R + \Om^2 - \frac{\ga}{m_0}\Big(1-\frac{\al}{2}\Big)^2\Om R\bigg)\notag\\
	&+ (n-3)R\Om\bigg(\om_0+\Om-\frac{\ga}{m_0}\Big(1-\frac{\al}{2}\Big)^2R\bigg)\notag\\
	&= Q_1 + Q_2 + Q_3 + \big(\rho_0+(\ga-1)R\big)\mathcal{E},\label{E:M}
\end{align}
where:
\begin{align*}
	Q_1 &:= \rho_0\big(\mathcal{A}\Om+\mathcal{B}R\big) - (n-2)\om_0^2R,\\
	Q_2 &:= (\ga-1)R\big(\mathcal{A}\Om+\mathcal{B}R\big) + \rho_0\big(\mathcal{C}\Om^2+\mathcal{D}R\Om\big) - 2(n-2)\om_0R\Om\\
	&+ (n-2)\ga\Big(1-\frac{\al}{2}\Big)^2\frac{\om_0}{m_0}R^2 + (n-3)\om_0R\Om,\\
	Q_3 &:= (\ga-1)R\big(\mathcal{C}\Om^2+\mathcal{D}R\Om\big) - (n-2)R\bigg(\Om^2 - \frac{\ga}{m_0}\Big(1-\frac{\al}{2}\Big)^2\Om R\bigg)\\
	&+ (n-3)R\Om^2 - \Big(1-\frac{\al}{2}\Big)^2\frac{(n-3)\ga}{m_0}R^2\Om.
\end{align*}
We now use~\eqref{E:AB} to infer that for any $\ga>\max\big\{\frac{19}{12},\frac{10+n}{9}\big\}$, $Q_1\geq0$. Using~\eqref{E:AB} and regrouping the terms, we have the bound
\begin{align}
	Q_2 > \bigg((n-2)(\ga-1)\frac{\om_0^2}{\rho_0}+(n-2)\ga\Big(1-\frac{\al}{2}\Big)^2\frac{\om_0}{m_0}\bigg)R^2 + \big(\rho_0\mathcal{D}-(n-1)\om_0\big)R\Om + 2\rho_0\Om^2.\label{E:Q2}
\end{align}
Noticing that
\begin{align*}
	\bigg(\frac{n+1}{n-2}\rho_0\Om-\om_0R\bigg)^2 \geq 0,
\end{align*}
we have
\begin{align*}
	2\rho_0\Om^2 \geq 4\frac{n-2}{n+1}\om_0R\Om - 2\frac{(n-2)^2}{(n+1)^2}\frac{\om^2_0}{\rho_0}R^2.
\end{align*}
Applying this inequality in~\eqref{E:Q2}, along with \begin{align*}
	\ga(n-2)\Big(1-\frac{\al}{2}\Big)^2\frac{\om_0}{m_0}R^2 \geq 0,
\end{align*}
we obtain
\begin{align*}
	Q_2 > \bigg((n-2)(\ga-1) - 2\frac{(n-2)^2}{(n+1)^2}\bigg)\frac{\om_0^2}{\rho_0}R^2 + \bigg(\frac{(n-2)^2}{2(3\ga-4)^2} + 4\frac{n-2}{n+1} - (n-1)\bigg)\om_0R\Om,
\end{align*}
where we have used the fact that
\begin{align}
	\rho_0\mathcal{D} = \frac{2}{9\om_0}\bigg(\frac{3(n-2)(2-\ga)}{2(n+3\ga-6)}\bigg)^2 = \frac{(n-2)^2}{2(3\ga-4)^2}\om_0.\label{E:DSimple}
\end{align}
We next apply the inequality of Lemma~\ref{L:Free} to further bound this from below as
\begin{align}
	Q_2 > q_2(n,\ga) \om_0R\Om - \bigg((n-2)(\ga-1) - 2\frac{(n-2)^2}{(n+1)^2}\bigg)\frac{\om_0}{\rho_0}R^2\Om,\label{E:Q2Alt}
\end{align}
where
\begin{align*}
	q_2(n,\ga) = (n-2)(\ga-1) - 2\frac{(n-2)^2}{(n+1)^2} + \frac{(n-2)^2}{2(3\ga-4)^2} + 4\frac{n-2}{n+1} - (n-1).
\end{align*}
To see that $q_2$ is positive, we first differentiate with respect to $\ga$ to obtain
\begin{align*}
	\frac{\pa q_2}{\pa\ga}(n,\ga) = (n-2)\bigg(1-\frac{3(n-2)}{(3\ga-4)^3}\bigg) < 0
\end{align*}
for all $n\geq4$ and $\ga\in\big(\frac{19}{12},\frac{11}{6}\big)$. Similarly, differentiating $q_2$ with respect to $n$, we obtain
\begin{align*}
	\frac{\pa q_2}{\pa n}(n,\ga) = \ga-2 + \frac{36}{(n+1)^3} + \frac{n-2}{(4-3\ga)^2} > 0
\end{align*}
for all $n\geq4$ and $\ga\in\big(\frac{19}{12},\frac{11}{6}\big)$. Thus, for $n\geq4$ and $\ga\in\big(\frac{19}{12},\frac{11}{6}\big)$, we have
\begin{align*}
	q_2(n,\ga) \geq q_2\Big(\frac{11}{6},4\Big) > 0.
\end{align*}
Before combining $Q_2$ with $Q_3$, we first apply~\eqref{E:DSimple} to simplify $Q_3$ as
\begin{align*}
	Q_3 = \big(2(\ga-1)-1\big)R\Om^2 + \frac{(n-2)^2(\ga-1)}{2(3\ga-4)^2}\frac{\om_0}{\rho_0}R^2\Om + \frac{\ga}{m_0}\Big(1-\frac{\al}{2}\Big)^2R^2\Om.
\end{align*} 
Now, combining this identity and~\eqref{E:Q2Alt} with $\mathcal{E}\geq0$ and $Q_1\geq0$, and recalling~\eqref{E:M}, we have 
\begin{align*}
	\mathcal{M} &= Q_1 + Q_2 + Q_3 + \big(\rho_0+(\ga-1)R\big)\mathcal{E}\\
	&> \bigg((n-2)(\ga-1) - 2\frac{(n-2)^2}{(n+1)^2} + \frac{(n-2)^2}{2(3\ga-4)^2} + 4\frac{n-2}{n+1} - (n-1)\bigg)\om_0R\Om\\
	&- \bigg((n-2)(\ga-1) - 2\frac{(n-2)^2}{(n+1)^2}\bigg)\frac{\om_0}{\rho_0}R^2\Om\\
	&+ \big(2(\ga-1)-1\big)R\Om^2 + \frac{(n-2)^2(\ga-1)}{2(3\ga-4)^2}\frac{\om_0}{\rho_0}R^2\Om + \frac{\ga}{m_0}\Big(1-\frac{\al}{2}\Big)^2R^2\Om\\
	&> \bigg(\frac{(n-2)^2}{2(3\ga-4)^2} + 4\frac{n-2}{n+1} - (n-1) + \frac{n-2}{n+1}\big(2(\ga-1)-1\big) + \frac{(n-2)^2(\ga-1)}{2(3\ga-4)^2}\bigg)\frac{\om_0}{\rho_0}R^2\Om,
\end{align*}
where we have used $q_2\geq0$, the bound
\begin{align*}
	\om_0R\Om \geq \frac{\om_0}{\rho_0}R^2\Om
\end{align*}
and~\eqref{E:B5}, that is,
\begin{align*}
	R\Om^2 \geq \frac{n-2}{n+1}\frac{\om_0}{\rho_0} R^2\Om.
\end{align*}
We set
\begin{align*}
	q_3(n,\ga) = \frac{(n-2)^2}{2(3\ga-4)^2} + 4\frac{n-2}{n+1} - (n-1) + \frac{n-2}{n+1}\big(2(\ga-1)-1\big) + \frac{(n-2)^2(\ga-1)}{2(3\ga-4)^2}.
\end{align*}
It remains to show that this quantity is positive. Differentiating $q_3$ with respect to $\ga$ we obtain
\begin{align*}
	\frac{\pa q_3}{\pa\ga}(n,\ga) = \frac{1}{2}(n-2)\bigg(\frac{4}{n+1}-\frac{n-2}{(3\ga-4)^2}-\frac{8(n-2)}{(3\ga-4)^3}\bigg) < 0
\end{align*}
for all $n\geq4$ and $\ga\in\big(\frac{19}{12},\frac{11}{6}\big)$. Similarly, differentiating $q_3$ with respect to $n$ we obtain
\begin{align*}
	\frac{\pa q_3}{\pa n}(n,\ga) = \bigg(1 + \frac{4}{3\ga-4}\bigg)\frac{n-2}{3(3\ga-4)}+ \frac{6\ga+3}{(n+1)^2} - 1 > 0
\end{align*}
for all $n\geq4$ and $\ga\in\big(\frac{19}{12},\frac{11}{6}\big)$. Thus, for $n\geq4$ and $\ga\in\big(\frac{19}{12},\frac{11}{6}\big)$, we have
\begin{align*}
	q_3(n,\ga) \geq q_3\Big(\frac{11}{6},4\Big) > 0.
\end{align*}
We therefore obtain $\mathcal{M}>0$ at $y=\mathcal{T}$ and hence a contradiction.
\end{proof}

\begin{corollary}\label{C:rhoPrimeLower}
	Let $n=4,6$ and $\ga>\max\big\{\frac{19}{12},\frac{10+n}{9}\big\}$. Consider the unique global-in-$y$ solution to the initial value problem~\eqref{E:rhoPrime1}--\eqref{E:omegaPrime1} given by Theorem~\ref{T:GE}. Then for all $y>0$ the following bound holds
	\begin{align}
		\left(-\frac{G}{y^2}\right)\left(-y\frac{\rho'}{\rho}\right) > (n-2)\frac{\om_0^2}{\rho_0}R + 2 \Om^2 + \frac{4\pi}{3\om_0}\Big(1-\frac{\al}{2}\Big)^2 R \Om
	\end{align}
\end{corollary}

\begin{proof}
The claim is a trivial consequence of the identity~\eqref{E:rhoPrimeID} and the lower bound~\eqref{E:AB}.
\end{proof}

\subsection{Monotonicity, Sharp Bounds and Limiting Values of $\rho$ and $\om$}

In the next proposition we show that the global-in-$y$ self-similar density $\rho(y)$ generated by Theorem~\ref{T:GE} converges to zero as $y\to\infty$, which is consistent with the behaviour of the density associated with the exact far-field solution (see Definition~\ref{D:FarField}).

\begin{proposition}[Asymptotic value of $\rho$]\label{P:rhoLim}
	Consider the unique global-in-$y$ solution to the initial value problem~\eqref{E:rhoPrime1}--\eqref{E:omegaPrime1} given by Theorem~\ref{T:GE}. Then 
	\begin{align}
		\lim_{y\to\infty}\rho(y) = 0.\label{E:rhoLim}
	\end{align}
\end{proposition}

\begin{proof}
Since $\rho'(y)<0$ for all $y>0$ by Theorem~\ref{T:GE}, we assume for contradiction that
\begin{align}
	\lim_{y\to\infty}\rho(y) = \rho_\infty > 0\label{E:rhoLimContra}
\end{align}
for some $0<\rho_\infty<\rho_0$. From Corollary~\ref{C:rhoPrimeLower} we have
\begin{align*}
	\rho' < -\frac{\rho}{y}\left(-\frac{y^2}{G}\right)\bigg((n-2)\frac{\om_0^2}{\rho_0}R + 2\Om^2 + \frac{4\pi}{3\om_0}\Big(1-\frac{\al}{2}\Big)^2 R \Om\bigg).
\end{align*}
Thus, for $y$ sufficiently large so that
\begin{align*}
	\rho(y) < \frac{\rho_\infty+\rho_0}{2},
\end{align*}
we have
\begin{align*}
	\rho' &< -\frac{\rho_\infty}{y}\bigg(\frac{1}{\om_{\text{upper}}^2}\bigg)\bigg((n-2)\frac{\om_0^2}{\rho_0}\frac{\rho_0-\rho_\infty}{2} + 2(\om-\om_0)^2 + \frac{4\pi}{3\om_0}\Big(1-\frac{\al}{2}\Big)^2\frac{\rho_0-\rho_\infty}{2}(\om-\om_0)\bigg)\\
	&< -\frac{\beta}{y},
\end{align*}
for some constant $\beta>0$. Therefore $(y^\beta\rho)'<0$, which is in contradiction with the claim~\eqref{E:rhoLimContra}. This proves~\eqref{E:rhoLim}.
\end{proof}

\begin{proposition}[Monotonicity]\label{P:Mon}
	Consider the unique global-in-$y$ solution to the initial value problem~\eqref{E:rhoPrime1}--\eqref{E:p} given by Theorem~\ref{T:GE}. Let 
	\begin{align*}
		y_0 = \sup\{y > 0\ |\ \om(\tilde{y}) < 2-\ga\ \text{ \emph{for all} }\ \tilde{y}\in(0,y)\},
	\end{align*}
	then $\om'(y) > 0$ for all $y\in(0,y_0)$.
\end{proposition}

\begin{proof}
It follows from Theorem~\ref{T:LWP} that there exists a $\de\in(0,y_0)$ such that $\om'(y)>0$ for $y\in(0,\de)$. Now suppose for contradiction that there exists a $\hat{y}\in(0,y_0)$ such that $\om'(\hat{y})=0$. We will prove for contradiction that $\om''(\hat{y})>0$.

First, we note that
\begin{align}
	\om' &= -\frac{3(\om-\om_0)}{y} - \frac{\om(yh-q)}{G}\notag\\
	&= -\frac{1}{G}\left(-\frac{3(\om-\om_0)}{y}\bigg(y^2\om^2 - \ga\Big(1-\frac{\al}{2}\Big)^2\frac{p}{\rho}\bigg) + y\om h - \om q\right)\notag\\
	&= -\frac{y\om}{G}\left(h - 3(\om-\om_0)\om + \big(3\ga(\om-\om_0)-n\om_0\big)\Big(1-\frac{\al}{2}\Big)^2\frac{p}{y^2\rho\om}\right)\notag\\
	&= -\frac{y\om}{G}\big(A(\om) + B(\rho,\om) + P(\rho,\om,p)\big),\label{E:omegaPrimeID}
\end{align}
where:
\begin{align}
	A(\om) &:= 2\om^2+\Big(\ga-1-\frac{\al}{2}\Big)\om+\Big(\ga-1-\frac{\al}{2}\Big)(2-\ga) - 3(\om-\om_0)\om,\label{E:ADef}\\
	B(\rho,\om) &:= -\frac{2}{9}\Big(1-\frac{\al}{2}\Big)^2\frac{\rho\om}{\rho_0\om_0},\label{E:BDef}\\
	P(\rho,\om,p) &:= \Big(1-\frac{\al}{2}\Big)^2f(\om)\frac{p}{y^2\rho},\label{E:PDef}
\end{align}
and
\begin{align*}
	f(\om) := \frac{3\ga(\om-\om_0)-n\om_0}{\om}.
\end{align*}
Since $G(y)<0$ for all $y>0$, we see that $\om'(\hat{y})=0$ implies
\begin{align}
	A(\hat{\om}) + B(\hat{\rho},\hat{\om}) + P(\hat{\rho},\hat{\om},\hat{p}) = 0,\label{E:ABP}
\end{align}
where to simplify notation, we define
\begin{align}
	\hat{\om} &:= \om(\hat{y}), & \hat{\rho} &:= \rho(\hat{y}), & \hat{p} &:= p(\hat{y}).
\end{align}
Thus
\begin{align*}
	\om''(\hat{y}) = -\frac{\hat{y}\hat{\om}}{G(\hat{y})}\big(B'(\hat{\rho},\hat{\om}) + P'(\hat{\rho},\hat{\om},\hat{p})\big)
\end{align*}
and it remains to show this is strictly positive for $\hat{\om}\in(\om_0,2-\ga)$. First, from~\eqref{E:rhoPrime1}--\eqref{E:omegaPrime1}, we have
\begin{align*}
	\frac{d}{dy}(\rho\om)\bigg|_{y=\hat{y}} = \bigg(\frac{\rho'}{\rho}+\frac{\om'}{\om}\bigg)\rho\om\bigg|_{y=\hat{y}} = -\frac{3(\hat{\om}-\om_0)}{\hat{y}\hat{\om}}\hat{\rho}\hat{\om},
\end{align*}
so that
\begin{align*}
	B'(\hat{\rho},\hat{\om}) = -\frac{3(\hat{\om}-\om_0)}{\hat{y}\hat{\om}}B(\hat{\rho},\hat{\om}).
\end{align*}
Moreover, from~\eqref{E:p},
\begin{align*}
	\frac{d}{dy}\bigg(\frac{p}{y^2\rho}\bigg)\bigg|_{y=\hat{y}} &= \bigg[\bigg(\frac{n-2}{y} + (\ga-1)\frac{\rho'}{\rho} + \frac{n}{3}\frac{(\rho\om)'}{\rho\om}\bigg)\frac{p}{y^2\rho}\bigg]_{y=\hat{y}}\\
	&= \frac{1}{y\om}\big((n-2)\hat{\om} - (n+3\ga-3)(\hat{\om}-\om_0)\frac{\hat{p}}{\hat{y}^2\hat{\rho}}\\
	&= \frac{1}{\hat{y}\hat{\om}}\big((3\ga-4)(2-\ga-\hat{\om}) - 3(\hat{\om}-\om_0)\big)\frac{\hat{p}}{\hat{y}^2\hat{\rho}},
\end{align*}
so that
\begin{align*}
	P'(\hat{\rho},\hat{\om},\hat{p}) = \bigg(\frac{(3\ga-4)(2-\ga-\hat{\om})}{\hat{y}\hat{\om}} - \frac{3(\hat{\om}-\om_0)}{\hat{y}\hat{\om}}\bigg)P(\hat{\rho},\hat{\om},\hat{p}).
\end{align*}
Thus, combining these derivatives and applying \eqref{E:ABP}, we obtain
\begin{align}
	-G(\hat{y})\om''(\hat{y}) &= \hat{y}\hat{\om}\big(B'(\hat{\rho},\hat{\om}) + P'(\hat{\rho},\hat{\om},\hat{p})\big)\notag\\
	&= 3(\hat{\om}-\om_0)A(\hat{\om}) + (3\ga-4)(2-\ga-\hat{\om})P(\hat{\rho},\hat{\om},\hat{p})\notag\\
	&= 3(\hat{\om}-\om_0)A(\hat{\om}) + (3\ga-4)(2-\ga-\hat{\om})\Big(1-\frac{\al}{2}\Big)^2f(\hat{\om})\frac{\hat{p}}{\hat{y}^2\hat{\rho}}.\label{E:omegaPrimePrime}
\end{align}
Thus if $f(\hat{\om})\geq0$, then we obtain $\om''(\hat{y})>0$ and derive the claimed contradiction. It is simple to see that $f(\om)\geq0$ is equivalent to $\om\geq\big(1+\frac{n}{3\ga}\big)\om_0$ and a calculation based on~\eqref{E:alpha(n)} shows that
\begin{align*}
	\om_0 < \bigg(1+\frac{n}{3\ga}\bigg)\om_0 = \frac{(3\ga-4)(2-\ga)(n+3\ga)}{3\ga(n+3\ga-6)} = \bigg(1-\frac{4n-6\ga}{3\ga(n+3\ga-6)}\bigg)(2-\ga) < 2-\ga
\end{align*}
for $n\geq4$ and $\ga\in\big(\frac{19}{12},\frac{11}{6}\big)$. For the remainder of the proof we thus assume $\om_0<\hat{\om}<\big(1+\frac{n}{3\ga}\big)\om_0$ so that $f(\hat{\om})<0$.

We now apply~\eqref{E:B4} and~\eqref{E:B5} to obtain from~\eqref{E:omegaPrimePrime}
\begin{align*}
	-G(\hat{y})\om''(\hat{y}) &> 3(\hat{\om}-\om_0)A(\hat{\om}) + (3\ga-4)(2-\ga-\hat{\om})\Big(1-\frac{\al}{2}\Big)^2f(\hat{\om})\frac{n+1}{n-2}\frac{\rho_0}{\om_0}\frac{\hat{\om}}{m_0}(\hat{\om}-\om_0)\\
	&= \bigg(3\bar{A}(\hat{\om}) + (3\ga-4)\Big(1-\frac{\al}{2}\Big)^2f(\hat{\om})\frac{n+1}{n-2}\frac{\rho_0}{\om_0}\frac{\hat{\om}}{m_0}\bigg)(2-\ga-\hat{\om})(\hat{\om}-\om_0)\\
	&=: F(\hat{\om};n,\ga)(2-\ga-\hat{\om})(\hat{\om}-\om_0),
\end{align*}
where $\bar{A}$ is defined as
\begin{align}
	\bar{A}(\om) := (2-\ga-\om)^{-1}A(\om) = \om - \bigg(1 - \frac{n-2}{2(3\ga-4)}\bigg)\om_0.\label{E:BarADef}
\end{align}
Simplifying $F$, we recall~\eqref{E:m0Def} to see
\begin{align*}
	F(\hat{\om};n,\ga) = 3\bar{A}(\hat{\om}) + \frac{3(n-2)(\ga-1)(11-6\ga)}{2n(3\ga-4)}\big(3\ga(\hat{\om}-\om_0)-n\om_0\big),
\end{align*}
where we have also used~\eqref{E:alpha(n)} to see
\begin{align*}
	1-\frac{\al}{2} = \frac{3(n-2)}{2(3\ga-4)}\om_0.
\end{align*}
Since $F$ is linearly increasing in $\om$, we thus have
\begin{align*}
	F(\hat{\om};n,\ga) \geq F(\om_0;n,\ga) = \frac{3(n-2)}{2(3\ga-4)}\big(1-(\ga-1)(11-6\ga)\big)\om_0 > 0
\end{align*}
for all $\ga\in\big(\frac{19}{12},\frac{11}{6}\big)$. Therefore $\om''(\hat{y})>0$ whenever $\om'(\hat{y})=0$. This concludes the proof.
\end{proof}

With this monotonicity proposition in hand, we now prove that the global-in-$y$ self-similar relative velocity $\om(y)$ generated by Theorem~\ref{T:GE} remains bounded above by $2-\ga$ and converges monotonically to this limiting value as $y\to\infty$. This is consistent with the expectation that the self-similar solution coincides in the far-field limit with the velocity associated with the exact far-field solution (compare Definition~\ref{D:FarField}).

\begin{proposition}[Sharp upper bound on $\om$]\label{P:omegaBoundSharp}
	Let $(\rho,\om)$ be the unique global-in-$y$ solution to the initial value problem~\eqref{E:rhoPrime1}--\eqref{E:omegaPrime1} given by Theorem~\ref{T:GE}. Then, for all $y\in(0,\infty)$, we have
	\begin{align}
		\om_0<\om(y)<2-\ga.
	\end{align}
\end{proposition}

\begin{proposition}[Asymptotic value of $\om$]\label{P:omegaLim}
	Let $(\rho,\om)$ be the unique global-in-$y$ solution to the initial value problem~\eqref{E:rhoPrime1}--\eqref{E:omegaPrime1} given by Theorem~\ref{T:GE}. Then we have
	\begin{align}
		\lim_{y\to\infty}\om(y) = 2-\ga.
	\end{align}
\end{proposition}

Before proving these propositions, we first establish two auxiliary properties of the relative velocity $\om$ through Lemmas~\ref{L:Aux1} and~\ref{L:Aux2}.

\begin{lemma}\label{L:Aux1}
	Let $(\rho,\om)$ be the unique global-in-$y$ solution to the initial value problem~\eqref{E:rhoPrime1}--\eqref{E:omegaPrime1} given by Theorem~\ref{T:GE}. Then, setting
	\begin{align}
		\om_1 := \bigg(1+\frac{n}{3\ga}\bigg)\om_0,\label{E:omega1Def}
	\end{align}
	 there exists a $y_1>0$ such that $\om(y)\in(\om_0,\om_1)$ for $y\in(0,y_1)$, $\om(y_1)=\om_1$ and $\om'(y)>0$ for $y\in(0,y_1]$.
\end{lemma}

\begin{proof}
Observe from Proposition~\ref{P:Mon} that it is sufficient to ensure that there exists a first point $y=y_1$ such that $\om(y)=\om_1$, as $\om(y)$ remains strictly monotone on the interval $(0,y_1]$ due to $\om_1\in(\om_0,2-\ga)$. To prove the claim, we thus suppose for a contradiction that $\om(y)<\om_1$ for all $y>0$ and recall from Proposition~\ref{P:Mon} that this ensures $\om'(y)>0$ for all $y>0$.

We begin by recalling from~\eqref{E:omegaPrimeID} that
\begin{align*}
	\om' = -\frac{y\om}{G}\big(A(\om) + B(\rho,\om) + P(\rho,\om,p)\big),
\end{align*}
where $A$, $B$, $P$ are defined in~\eqref{E:ADef}--\eqref{E:PDef} and, moreover, from the definitions of $P$ and $\om_1$, we have
\begin{align*}
	P = \Big(1-\frac{\al}{2}\Big)^2f(\om)\frac{p}{y^2\rho},
\end{align*}
where $f(\om)<0$ for $\om\in(\om_0,\om_1)$. Therefore, applying the bound~\eqref{E:B4} for $P$ and using the fact that $\lim_{y\to\infty}B=0$ from Proposition~\ref{P:rhoLim}, we obtain
\begin{align}
	\om' 
	&\geq \frac{1}{y\om}\bigg(A(\om) + \Big(1-\frac{\al}{2}\Big)^2\frac{\rho_0}{m_0}\om f(\om)\bigg) + o_{y\to\infty}\bigg(\frac{1}{y}\bigg),\label{E:omegaPrimeLower}
\end{align}
where we have also used $y^2\om^2>-G>0$. We define the quadratic polynomial
\begin{align*}
	q(\om) :=&\ A(\om) + \Big(1-\frac{\al}{2}\Big)^2\frac{\rho_0}{m_0}\om f(\om)\\
	=&\ -\om^2+\bigg(\ga-1-\frac{\al}{2}+3\om_0+3\ga\Big(1-\frac{\al}{2}\Big)^2\frac{\rho_0}{m_0}\bigg)\om + \Big(\ga-1-\frac{\al}{2}\Big)(2-\ga)\\
	-&\ (n+3\ga)\Big(1-\frac{\al}{2}\Big)^2\frac{\rho_0\om_0}{m_0}.
\end{align*}
Noting that $q$ attains its minimum over the set $[\om_0,\om_1]$ at either $\om_0$ or $\om_1$, we compute
\begin{align*}
	q(\om_1) = A(\om_1) = (2-\ga-\om)\bigg(\frac{n}{3\ga} + \frac{n-2}{2(3\ga-4)}\bigg)\om_0 > 0,
\end{align*}
where we have used $f(\om_1)=0$ to simplify the expression, and then 
\begin{align*}
	q(\om_0) = \frac{(n-2)^2}{2(n+1)(3\ga-4)^2}\big((n-4)+2(3\ga-4)^2+3(2-\ga)\big)\om_0^2 > 0,
\end{align*}
to see that there exists a $\bar{q}>0$ such that $q\geq\bar{q}$ on $[\om_0,\om_1]$. Thus, from~\eqref{E:omegaPrimeLower},
\begin{align*}
	\om' \geq \frac{\bar{q}}{\om_{\text{upper}}}\frac{1}{y} + o_{y\to\infty}\bigg(\frac{1}{y}\bigg),
\end{align*}
which after integration yields a contradiction to the assumption $\om(y)<\om_1$ for all $y>0$, thus proving the claim.
\end{proof}

\begin{lemma}\label{L:Aux2}
	Let $(\rho,\om)$ be the unique global-in-$y$ solution to the initial value problem~\eqref{E:rhoPrime1}--\eqref{E:omegaPrime1} given by Theorem~\ref{T:GE} and let $y_0$, $y_1$ and $\om_1$ be defined as in Proposition~\ref{P:Mon} and Lemma~\ref{L:Aux1}. Then there exists an $\om_*\in(\om_1,2-\ga)$ such that for all $y\in(0,y_0)$, whenever $\om(y)\in(\om_*,2-\ga)$, we have
	\begin{align}
		\frac{p}{y^2\rho} \leq \frac{1}{2}(2-\ga-\om).\label{E:pUpper}
	\end{align}
\end{lemma}

\begin{proof}
\emph{Step 1: Defining $\om_*$.}\\
We begin by first introducing the auxiliary notation
\begin{align}
	\om_d := \bigg(1+\frac{n-2}{3\ga-1}\bigg)\om_0,\label{E:omegadDef}
\end{align}
and then defining $\om_*$ by
\begin{align}
	\om_* := 2-\ga - \frac{2\rho_0\om_d}{m_0}.\label{E:omegaAstDef}
\end{align}
To show that
\begin{align}
	\om_1 < \om_* < 2-\ga\label{E:omegaAstBounds}
\end{align}
as claimed in the statement of the lemma, we first note that as $\om_d>0$, the upper bound $\om_*<2-\ga$ is obvious. To prove the lower bound, we expand $\om_d$, $\om_1$, $\rho_0$ and $m_0$ (defined in~\eqref{E:m0Def}) according to their definitions to find that this is equivalent to proving
\begin{align*}
	L(n,\ga) := \frac{2\ga(\ga-1)(n+3\ga-6)(n+3\ga-3)}{2n-3\ga} < \frac{(n+1)n(3\ga-1)(2-\ga)}{11-6\ga} =: R(n,\ga).
\end{align*}
It is clear that for $n\geq4$ and $\ga\in\big(\frac{19}{12},\frac{11}{6}\big)$, both $L$ and $R$ are strictly increasing with respect to $\ga$. Tedious, direct calculation then shows the three inequalities:
\begin{align*}
L\Big(4,\frac{21}{12}\Big) &< R\Big(4,\frac{19}{12}\Big), & L\Big(4,\frac{11}{6}\Big) &< R\Big(4,\frac{21}{12}\Big), & L\Big(6,\frac{11}{6}\Big) &< R\Big(6,\frac{19}{12}\Big),
\end{align*}
which combine to prove~\eqref{E:omegaAstBounds} for $n=4,6$ and $\ga\in\big(\frac{19}{12},\frac{11}{6}\big)$, as required.\\

\noindent\emph{Step 2: Prove~\eqref{E:pUpper} for $\om(y)\in[\om_0,\om_*]$.}\\
We compute from~\eqref{E:rhoPrime1}--\eqref{E:p},
\begin{align}
	\frac{d}{dy}\bigg(\frac{p}{y^2\rho}\bigg) &= \bigg(\frac{n-2}{y} + (\ga-1)\frac{\rho'}{\rho} + \frac{n}{3}\frac{(\rho\om)'}{\rho\om}\bigg)\frac{p}{y^2\rho}\notag\\
	&= \frac{1}{y\om}\big((n+3\ga-3)\om_0 - (3\ga-1)\om - (\ga-1)y\om'\big)\frac{p}{y^2\rho}\label{E:pDiv1}\\
	&= \frac{1}{y\om}\big((3\ga-4)(2-\ga-\om) - 3(\om-\om_0) - (\ga-1)y\om'\big)\frac{p}{y^2\rho},\label{E:pDiv2}
\end{align}
where the last line follows from~\eqref{E:alpha(n)}. Recalling that $\om'(y)>0$ for $y\in(0,y_0)$ by Proposition~\ref{P:Mon}, we see from~\eqref{E:pDiv1} that $\frac{p}{y^2\rho}$ is strictly decreasing for all $y\in(0,y_0)$ such that $\om(y)\geq\om_d$ and we note that $\om_d<\om_1$ by their respective definitions,~\eqref{E:omega1Def} and~\eqref{E:omegadDef}. Thus, employing~\eqref{E:B4}, which gives
\begin{align}
	\frac{p}{y^2\rho} < \frac{\rho_0}{m_0}\om,
\end{align}
we obtain
\begin{align*}
 \frac{p}{y^2\rho} < \frac{\rho_0\om_d}{m_0}
\end{align*}
for all $y\in(0,y_0)$. Thus, by the definition~\eqref{E:omegaAstDef} of $\om_*$, for all $y\in(0,y_0)$ such that $\om(y)\leq\om_*$, we have shown
\begin{align*}
	\frac{p}{y^2\rho} < \frac{1}{2}(2-\ga-\om_*) \leq \frac{1}{2}\big(2-\ga-\om(y)\big),
\end{align*}
that is, \eqref{E:pUpper} holds strictly for all such $y$.\\

\noindent\emph{Step 3: Prove that~\eqref{E:pUpper} holds for $\om(y)\in(\om_*,2-\ga)$.}\\
We now suppose that there exists a $\bar{y}\in(y_1,y_0)$, where $y_1$ is as defined in Lemma~\ref{L:Aux1}, such that $\om(\bar{y})\in(\om_*,2-\ga)$ and
\begin{align*}
	\frac{p(\bar{y})}{\bar{y}^2\rho(\bar{y})} = \frac{1}{2}\big(2-\ga-\om(\bar{y})\big).
\end{align*}
Returning to~\eqref{E:pDiv2}, we substitute~\eqref{E:omegaPrimeID} to see
\begin{multline}
	\frac{d}{dy}\bigg[\frac{p}{y^2\rho}-\frac{1}{2}(2-\ga-\om)\bigg]_{y=\bar{y}} = -\frac{y\om}{2G}(A + B + P)\bigg|_{y=\bar{y}}\\
	+ \frac{2-\ga-\om}{2y\om}\bigg((3\ga-4)(2-\ga-\om) - 3(\om-\om_0) + (\ga-1)\frac{y^2\om}{G}(A + B + P)\bigg)\bigg|_{y=\bar{y}}.\label{E:pUpperDiv}
\end{multline}
We observe that as $\ga>\frac{10+n}{9}>\frac{18+n}{15}$, we obtain from the definition~\eqref{E:omega1Def} of $\om_1$ that for all $\om\geq\om_1$, we have
\begin{align*}
	\om > (2-\ga-\om)(\ga-1).
\end{align*}
This implies in particular that
\begin{align*}
	-\frac{y\om}{2G} > -\frac{2-\ga-\om}{2y\om}(\ga-1)\frac{y^2\om}{G},
\end{align*}
and so, as $B<0$ by~\eqref{E:BDef}, we obtain from~\eqref{E:pUpperDiv} that
\begin{multline*}
	\frac{d}{dy}\bigg(\frac{p}{y^2\rho}-\frac{1}{2}(2-\ga-\om)\bigg)\bigg|_{y=\bar{y}} \leq -\frac{y\om}{2G}\big(A+P\big)\bigg|_{y=\bar{y}}\\
	+ \frac{2-\ga-\om}{2y\om}\bigg((3\ga-4)(2-\ga-\om) - 3(\om-\om_0) + (\ga-1)\frac{y^2\om}{G}\big(A + P\big)\bigg)\bigg|_{y=\bar{y}}.
\end{multline*}
Using the definition of $\bar{y}$, we see that $P$ simplifies at $\bar{y}$ as
\begin{align*}
	P(\rho,\om,p)\big|_{y=\bar{y}} = \Big(1-\frac{\al}{2}\Big)^2f(\om)\frac{2-\ga-\om}{2}\bigg|_{y=\bar{y}}.
\end{align*}
Therefore, we have obtained
\begin{align*}
	\frac{d}{dy}&\bigg(\frac{p}{y^2\rho}-\frac{1}{2}(2-\ga-\om)\bigg)\bigg|_{y=\bar{y}} \geq -\frac{y}{2\om G}(2-\ga-\om)H(\om),
\end{align*}
where, recalling $\bar{A}(\om)$ from~\eqref{E:BarADef}, $H$ is defined by
\begin{equation}
\begin{aligned}
	H(\om) &:= \om\big(\ga\om-(\ga-1)(2-\ga)\big)\bigg(\bar{A}(\om) + \frac{1}{2}\Big(1-\frac{\al}{2}\Big)^2f(\om)\bigg)\\
	&+ \big((3\ga-4)(2-\ga-\om) - 3(\om-\om_0)\big)\bigg(\om^2-\frac{\ga}{2}\Big(1-\frac{\al}{2}\Big)^2(2-\ga-\om)\bigg).\label{E:HDef}
\end{aligned}
\end{equation}

As $G<0$, in order to derive a contradiction and show~\eqref{E:pUpper}, it is sufficient to show $H(\om)<0$ for $\om\in(\om_*,2-\ga)$. We begin by noting that $H(\om)$ is cubic in $\om$ with the coefficient of $\om^3$ given by $-(2\ga-1)$ and so, to prove the desired inequality, it suffices to show that $H(0)<0$, $H(\om_d)>0$ and $H(\om_*)<0$, as we then obtain $H(\om)<0$ for all $\om\geq\om_*$. 

We have, substituting the definitions of $\om_0$ and $\al$ from~\eqref{E:alpha(n)},
\begin{align*}
	H(0) &= -\frac{1}{2}n(2-\ga)\Big(1-\frac{\al}{2}\Big)^2\om_0 < 0
\end{align*}
and, recalling $\om_d$ from~\eqref{E:omegadDef}, we also have
\begin{align*}
	H(\om_d) &= F_1(n,\ga)F_2(n,\ga)\om_0^2,
\end{align*}
where
\begin{align*}
	F_1(n,\ga) &:= \frac{(2-\ga)(9\ga^2-15\ga+6-n)}{(3\ga-1)(n+3\ga-6)},\\
	F_2(n,\ga) &:= \bigg(1+\frac{n-2}{3\ga-1}\bigg)\bigg(\frac{n-2}{3\ga-1} + \frac{n-2}{2(3\ga-4)}\bigg) - \frac{9(n-2)^2(2-\ga)(6\ga-n)}{8(3\ga-1)(3\ga-4)(n+3\ga-6)}.
\end{align*}
Clearly the denominator of $F_1$ is positive, and one may easily check for both $n=4$ and $n=6$ that the numerator is also positive for $\ga\in\big(\frac{19}{12},\frac{11}{6}\big)$, so that we obtain $F_1>0$. To see the positivity of $F_2(n,\ga)$, we note that
\begin{align*}
	F_2(n,\ga) = \frac{9(n-2)}{8(3\ga-1)^2(3\ga-4)(n+3\ga-6)}\bar{F}_2(n,\ga),
\end{align*}
where
\begin{align*}
	\bar{F}_2(n,\ga) := 4(\ga-1)(n+3\ga-3)(n+3\ga-6) - (n-2)(3\ga-1)(2-\ga)(6\ga-n)
\end{align*}
and observe:
\begin{align*}
	\bar{F}_2(4,\ga) &= 4(3\ga-2)\big((3\ga+1)(\ga-1) - (3\ga-1)(2-\ga)\big) > 0\\
	\bar{F}_2(6,\ga) &= 12(\ga-1)\big(3(\ga+1)\ga - 2(3\ga-1)(2-\ga)\big) > 0
\end{align*}
in the interval $\ga\in\big(\frac{19}{12},\frac{11}{6}\big)$. Thus $H(\om_d)>0$. As discussed above, it is therefore sufficient to show $H(\om_*)<0$ in order to show $H(\om)<0$ in the full interval $\om\in(\om_*,2-\ga)$. This is done using interval arithmetic, a rigorous computer-assisted form of proof, in Section~\ref{SS:A1}. An alternative approach, based on a further decomposition of $H(\om_*)$ and a fine splitting of the interval of $\ga$ is sketched in Section~\ref{SS:A2}. This completes the proof of the lemma.
\end{proof}

\begin{proof}[Proof of Proposition~\ref{P:omegaBoundSharp}]
Let $y_0$ and $\om_1$ be defined as in Proposition~\ref{P:Mon} and Lemma~\ref{L:Aux1} respectively.

If $\om(y)\leq\om_*$ for all $y>0$, we are done as $\om_*<2-\ga$. Otherwise, there exists a $y_2>0$ such that for all $y\in(y_2,y_0)$, we have $\om(y)\in(\om_*,2-\ga)$ by Proposition~\ref{P:Mon}. 

We now recall from~\eqref{E:omegaPrimeID} that
\begin{align*}
	\om' = -\frac{y\om}{G}\big(A(\om) + B(\rho,\om) + P(\rho,\om,p)\big),
\end{align*}
where $A$, $B$, $P$ are defined in~\eqref{E:ADef}--\eqref{E:PDef}. We have, from Lemma~\ref{L:Aux2},
\begin{align*}
	0 \leq P \leq \frac{1}{2}\Big(1-\frac{\al}{2}\Big)^2f(\om)(2-\ga-\om)
\end{align*}
for all $y\in(y_2,y_0)$, and hence,
\begin{align*}
	\om'(y) < -\frac{y\om}{G}(2-\ga-\om)\bigg(\bar{A}(\om) + \frac{1}{2}\Big(1-\frac{\al}{2}\Big)^2f(\om)\bigg),
\end{align*}
for all $y\in(y_2,y_0)$, where we have recalled the definition of $\bar{A}$ from~\eqref{E:BarADef}. As there exists a $c>0$ such that $|G(y)|\geq cy^2$ for $y>y_2$ (by Lemma~\ref{L:Supersonic}) and $\om$ is bounded (by Lemma~\ref{L:omegaUpper}), so that also $\bar{A}(\om)$ and $f(\om)$ are also bounded, this implies
\begin{align*}
	\big|\log(2-\ga-\om)'\big| \leq C
\end{align*}
and hence $y_0=\infty$, that is, $\om(y)<2-\ga$ for all $y\in(0,\infty)$.
\end{proof}

\begin{proof}[Proof of Proposition~\ref{P:omegaLim}]
Let $y_1$ and $\om_1$ be defined as in Lemma~\ref{L:Aux1}.

To show that $\om(y)\to2-\ga$ as $y\to\infty$, we assume for contradiction that
\begin{align*}
	\lim_{y\to\infty}\om = \om_\infty < 2-\ga,
\end{align*}
where we recall from Proposition~\ref{P:Mon} that $\om$ is monotone increasing and from Proposition~\ref{P:omegaBoundSharp} that $\om<2-\ga$. We recall again~\eqref{E:omegaPrimeID} and observe from the definition of $\om_1$ and~\eqref{E:PDef} that for $y>y_1$, $P(y)\geq0$. Now as $B(y)\to0$ as $y\to\infty$ due to Proposition~\ref{P:rhoLim}, there exists a $y_3>y_1$ such that for all $y>y_3$, we have
\begin{align*}
	|B(y)| &< \frac{1}{2}(2-\ga-\om_\infty)\bar{A}(\om_0).
\end{align*}
Thus, for $y>y_3$, we deduce from~\eqref{E:omegaPrimeID} in particular,
\begin{align*}
	\om' &> \frac{1}{y\om}\big(A(\om) + B(\rho,\om)\big) > \frac{2-\ga-\om_\infty}{2\om_\infty}\bar{A}(\om_0)\frac{1}{y},
\end{align*}
where we have used the fact that $A$ is an increasing function of $\om$. Integrating, we obtain a contradiction and hence $\om(y)\to2-\ga$ as $y\to\infty$.
\end{proof}

\begin{lemma}\label{L:Asymptotics}
	Let $(\rho,\om)$ be the unique global-in-$y$ solution to the initial value problem~\eqref{E:rhoPrime1}--\eqref{E:omegaPrime1} given by Theorem~\ref{T:GE}. Then there exists a constant $\rho_\infty>0$ such that we have the enhanced asymptotic behaviour:
	\begin{align}
		\rho(y) &= \rho_\infty y^{-\frac{2-\al}{2-\ga}} + O_{y\to\infty}\big(y^{-\frac32\frac{2-\al}{2-\ga}}\big), & \omega(y) &= 2-\ga + O_{y\to\infty}\big(y^{-\frac{1}{2}\frac{2-\al}{2-\ga}}\big).
	\end{align}
\end{lemma}

\begin{proof}
Using~\eqref{E:rhoPrime1}--\eqref{E:omegaPrime1} and $\om'\geq 0$ from Proposition~\ref{P:Mon}, we first observe the identity and estimate
\begin{align*}
	\frac{d}{dy}\bigg(\frac{p}{y\rho}\bigg) &= \bigg(\frac{n-1}{y} + (\ga-1)\frac{\rho'}{\rho} + \frac{n}{3}\frac{(\rho\om)'}{\rho\om}\bigg)\frac{p}{y\rho}\\
	&= \frac{1}{y\om}\big((n+3\ga-3)\om_0 - (3\ga-2)\om - (\ga-1)y\om'\big)\frac{p}{y\rho}\\
	&\leq \frac{1}{y\om}\big((n+3\ga-3)\om_0 - (3\ga-2)\om\big)\frac{p}{y\rho}\\
	&= \frac{1}{y\om}\big((n+3\ga-3)\om_0 - (3\ga-2)(2-\gamma) + o_{y\to\infty}(1)\big)\frac{p}{y\rho}\\
	&= -\bigg(\frac{\ga-\al}{2-\ga} + o_{y\to\infty}(1)\bigg)\frac{p}{y^2\rho},
\end{align*}
where we have recalled~\eqref{E:nDef} in the final identity. As $\al<\ga$ by~\eqref{E:Constraints}, we deduce
\begin{align*}
	\frac{p}{y\rho} = O_{y\to\infty}\big(y^{-\frac{\ga-\al}{2-\ga}}\big).
\end{align*}
Thus, returning to the ODE~\eqref{E:rhoPrime1}, we have found, for some $\tilde{\beta}>0$,
\begin{align*}
	y\frac{\rho'}{\rho} = \frac{y^2h}{G} + O_{y\to\infty}\big(y^{-\tilde{\beta}}\big) = -\frac{2-\al}{2-\ga} + O_{y\to\infty}\big(y^{-\tilde{\beta}}\big).
\end{align*}
Similarly, from the ODE~\eqref{E:omegaPrime1}, we find
\begin{align*}
	y\frac{(2-\ga-\om)'}{2-\ga-\om} = -\frac{1}{2}\frac{2-\al}{2-\ga} + o_{y\to\infty}(1).
\end{align*}
Returning these asymptotics into the ODE system~\eqref{E:rhoPrime1}--\eqref{E:omegaPrime1}, we deduce also the order of the next term in the asymptotics as claimed in the lemma.
\end{proof}

\subsection{Proof of the Main Theorem}

\begin{proof}[Proof of Theorem~\ref{T:Main}]
The proof is now a trivial consequence of Theorem~\ref{T:GE}, Proposition~\ref{P:rhoLim}, Proposition~\ref{P:omegaLim} and Lemma~\ref{L:Asymptotics}.
\end{proof}

\appendix

\section{Appendix}

\subsection{Interval Arithmetic}
\label{SS:A1}

As discussed in the proof of Lemma~\ref{L:Aux2} above, we require the inequality $H(\om_*)<0$ (recall~\eqref{E:HDef}) for $n=4,6$ and $\max\big\{\frac{19}{12},\frac{10+n}{9}\big\}<\ga<\frac{11}{6}$ to complete the proof. The purpose of this section is to provide the code necessary to establish this inequality rigorously via interval arithmetic. As the required use of this code is a simple one, requiring us only to check the sign of a single-variable polynomial on a fixed interval, we implement this in Python, using the packages numpy and mpmath, which has interval arithmetic facility.

Explicit expressions for $H(\om_*)$ for $n=4$ and $n=6$ are given respectively by:
\begin{align*}
	H(\om_*)\big|_{n=4} &= -\frac{1}{6750(3\ga-1)^3(3\ga-2)^3}\big(7830416 - 88689136\ga + 381102528\ga^2 - 755565356\ga^3\\
	&+ 589426507\ga^4 + 296797356\ga^5 - 1225677744\ga^6 + 1578233592\ga^7 - 1306770165\ga^8\\
	&+ 749610288\ga^9 - 291131982\ga^{10} + 72171000\ga^{11} - 10235160\ga^{12} + 629856\ga^{13}\big),\\
	H(\om_*)\big|_{n=6} &= -\frac{2}{9261\ga^3(3\ga-1)^3}\big(1185408 - 13646304\ga + 60843888\ga^2 - 132703484\ga^3\\
	&+ 149712106\ga^4 - 88073001\ga^5 + 23431590\ga^6 + 333231\ga^7 - 2459613\ga^8\\
	&+ 1620872\ga^9 - 821935\ga^{10} + 238122\ga^{11} - 33048\ga^{12} + 1728\ga^{13}\big).
\end{align*}
The necessary Python code to establish the negativity of each of these polynomials via interval arithmetic is as follows. The code gives a suitable definition for $H(\om_*)$ in the case $n=4$ or $n=6$, then subdivides the relevant interval of $\ga$ into subintervals of either length $10^{-8}$ (in the case $n=4$, giving 25000000 intervals) or $10^{-6}$ (in the case $n=6$, giving 55556 intervals), then verifies the necessary inequality on each subinterval.
\begin{verbatim*}
	import numpy as np
	import mpmath
	from mpmath import iv
\end{verbatim*}
\begin{verbatim*}
	def Hstar4(g):
	return iv.mpf(7830416) - iv.mpf(88689136)*g
	+ iv.mpf(381102528)*g**iv.mpf(2) - iv.mpf(755565356)*g**iv.mpf(3)
	+ iv.mpf(589426507)*g**iv.mpf(4) + iv.mpf(296797356)*g**iv.mpf(5)
	- iv.mpf(1225677744)*g**iv.mpf(6) + iv.mpf(1578233592)*g**iv.mpf(7)
	- iv.mpf(1306770165)*g**iv.mpf(8) + iv.mpf(749610288)*g**iv.mpf(9)
	- iv.mpf(291131982)*g**iv.mpf(10) + iv.mpf(72171000)*g**iv.mpf(11)
	- iv.mpf(10235160)*g**iv.mpf(12) + iv.mpf(629856)*g**iv.mpf(13)
	def Hstar6(g):
	return iv.mpf(1185408) - iv.mpf(13646304)*g
	+ iv.mpf(60843888)*g**iv.mpf(2) - iv.mpf(132703484)*g**iv.mpf(3)
	+ iv.mpf(149712106)*g**iv.mpf(4) - iv.mpf(88073001)*g**iv.mpf(5)
	+ iv.mpf(23431590)*g**iv.mpf(6) + iv.mpf(333231)*g**iv.mpf(7)
	- iv.mpf(2459613)*g**iv.mpf(8) + iv.mpf(1620872)*g**iv.mpf(9)
	- iv.mpf(821935)*g**iv.mpf(10) + iv.mpf(238122)*g**iv.mpf(11)
	- iv.mpf(33048)*g**iv.mpf(12) + iv.mpf(1728)*g**iv.mpf(13)
\end{verbatim*}
\begin{verbatim*}
	increment = iv.mpf(10**(-8))
	total = 25000000
	gint = iv.mpf(['19/12','1.5833333433333332'])
	counter = 0 
	for k in range(0,total):
	if(Hstar4(gint)>0):
	gint = gint + increment
	counter = counter + 1
	else:
	print("error at k =", k)
	print(counter)
	break
	print(counter)
	Returns: 25000000
\end{verbatim*}
\begin{verbatim*}
	increment = iv.mpf(10**(-6))
	total = 55556
	gint = iv.mpf(['16/9','1.7777787777777776'])
	counter = 0 
	for k in range(0,total):
	if(Hstar6(gint)>0):
	gint = gint + increment
	counter = counter + 1
	else:
	print("error at k =", k)
	print(counter)
	break
	print(counter)
	Returns: 55556
\end{verbatim*}

\subsection{Decomposition of $H$}
\label{SS:A2}

The purpose of this section is to sketch an alternative strategy for establishing the condition $H(\om_*)<0$ necessary for the proof of Lemma~\ref{L:Aux2} and proved via interval arithmetic in Section~\ref{SS:A1} above. The idea rests on a further decomposition of $H(\om_*)$ and a subdivision of the interval of $\ga$, and leads to checking a finite number of numerical inequalities, which must, however, be done rigorously.

We can make the decomposition
\begin{align*}
	H(\om_*) = H_+(n,\ga) - H_-(n,\ga),
\end{align*}
where:
\begin{align*}
	H_+(n,\ga) &= (2-\ga)C(n,\ga) + 3\ga(2-\ga-\om_*)\om_*^2\\
	&+ \frac{9(n-2)^2\ga}{8(3\ga-4)^2}\om_0^2(2-\ga-\om_*)\big(n\om_0+3(\om_*-\om_0)+4(2-\ga-\om_*)\big),\\
	H_-(n,\ga) &= \ga(2-\ga-\om_*)C(n,\ga) + \big(3(\om_*-\om_0)+4(2-\ga-\om_*)^2\big)\om_*^2\\
	&+ \frac{9(n-2)^2}{8(3\ga-4)^2}\om_0^2\big(n(2-\ga)\om_0+3\ga^2(2-\ga-\om_*)^2\big),\\
	C(n,\ga) &= (\om_*-\om_0)\om_* + \frac{n-2}{2(3\ga-4)}\om_0\om_* + \frac{27(n-2)^2\ga}{8(3\ga-4)^2}\om_0^2(\om_*-\om_0).
\end{align*}
The first aim is to show:
\begin{align}
	\frac{\pa}{\pa\ga}H_+(4,\ga) &< 0, & \frac{\pa}{\pa\ga}H_+(6,\ga) &< 0, &\frac{\pa}{\pa\ga}H_-(4,\ga) &< 0, & \frac{\pa}{\pa\ga}H_-(6,\ga) &< 0,\label{E:HDivs}
\end{align}
for $n=4,6$ and $\ga\in\big(\frac{19}{12},\frac{11}{6}\big)$, so that $H_+(n,\ga)$ and $H_-(n,\ga)$ can be realised as positive decreasing functions of $\ga$. After this, we check
\begin{align*}
	\min_{\ga\in I_k}\{H_-(n,\ga)\} > \max_{\ga\in I_k}\{H_+(n,\ga)\}
\end{align*}
for a sequence of subintervals $I_k$ covering $\big(\frac{19}{12},\frac{11}{6}\big)$. Now clearly $H_+(n,\ga)$ and $H_-(n,\ga)$ are positive, so to show~\eqref{E:HDivs}, it is sufficient to observe:
\begin{align*}
	\frac{\pa}{\pa\ga}\frac{9(n-2)^2\ga}{8(3\ga-4)^2}\om_0^2 &\propto \frac{\pa}{\pa\ga}\frac{\ga(2-\ga)^2}{(n+3\ga-6)^2} < \frac{1}{(n+3\ga-6)^2}\frac{\pa}{\pa\ga}\ga(2-\ga)^2 = -\frac{(3\ga-2)(2-\ga)}{(n+3\ga-6)^2} < 0,\\
	\frac{\pa}{\pa\ga}\frac{9(n-2)^2\ga}{8(3\ga-4)^2}\om_0^3 &\propto \frac{\pa}{\pa\ga}\frac{\ga(3\ga-4)(2-\ga)^3}{(n+3\ga-6)^3} < \frac{1}{(n+3\ga-6)^3}\frac{\pa}{\pa\ga}\ga(3\ga-4)(2-\ga)^3\\
	&= -\frac{(2-\ga)^2}{4(n+3\ga-6)^3}\big(5(12\ga-19)+3(11-6\ga)+(\ga-1)\big) < 0,\\
	\frac{\pa}{\pa\ga}\ga(2-\ga-\om_*) &\propto \frac{\pa}{\pa\ga}\bigg(1+\frac{n-2}{3\ga-1}\bigg)\ga(\ga-1)(11-6\ga) < \bigg(1+\frac{n-2}{3\ga-1}\bigg)\frac{\pa}{\pa\ga}\ga(\ga-1)(11-6\ga)\\
	&= -\bigg(1+\frac{n-2}{3\ga-1}\bigg)\big((11-6\ga)+2\ga(9\ga-14)\big) < 0,
\end{align*}
in addition to
\begin{align*}
	\frac{\pa}{\pa\ga}\om_* &= \frac{\pa}{\pa\ga}\bigg((2-\ga) - \frac{4(\ga-1)(11-6\ga)(n+3\ga-3)}{3(n+1)n(3\ga-1)}\bigg)\\
	&= -1 - \frac{4(\ga-1)(11-6\ga)(n+3\ga-3)}{3(n+1)n(3\ga-1)}\bigg(\frac{1}{\ga-1}-\frac{6}{11-6\ga}+\frac{3}{n+3\ga-3}-\frac{3}{3\ga-1}\bigg)\\
	&= -1 + \frac{4(12\ga-17)}{3(n+1)n}\bigg(1+\frac{n-2}{3\ga-1}\bigg) + \frac{4(n-2)(\ga-1)(11-6\ga)}{3(n+1)n(3\ga-1)^2}\\
	&< -1 +\frac{(3\ga+1)(12\ga-17)}{15(3\ga-1)} + \frac{2(\ga-1)(11-6\ga)}{15(3\ga-1)^2} < -\frac{1}{2}
\end{align*}
and
\begin{align*}
	\frac{\pa}{\pa\ga}(\om_*-\om_0) &= \frac{\pa}{\pa\ga}\bigg((2-\ga) - \frac{4(\ga-1)(11-6\ga)(n+3\ga-3)}{3(n+1)n(3\ga-1)} - \frac{(3\ga-4)(2-\ga)}{n+3\ga-6}\bigg)\\
	&< -\frac{1}{2} -\frac{\pa}{\pa\ga}\frac{(3\ga-4)(2-\ga)}{n+3\ga-6}\\
	&= -\frac{1}{2} - \frac{2}{n+3\ga-6} + \frac{2(3\ga-4)}{n+3\ga-6} + \frac{3(3\ga-4)(2-\ga)}{(n+3\ga-6)^2}\\
	&< -\frac{1}{2} - \frac{2}{3\ga} + \frac{2(3\ga-4)}{3\ga-2} + \frac{3(3\ga-4)(2-\ga)}{(3\ga-2)^2} < 0,
\end{align*}
with the signs of all five derivatives holding for $n=4,6$ and $\ga\in\big(\frac{19}{12},\frac{11}{6}\big)$. Thus we see that $H_+(n,\ga)$ and $H_-(n,\ga)$ are positive decreasing functions of $\ga$ for $n=4,6$ and $\ga\in\big(\frac{19}{12},\frac{11}{6}\big)$. This means for any subinterval $I_k=(\ga_k,\ga_{k+1}) \subset \big(\frac{19}{12},\frac{11}{6}\big)$, we have:
\begin{align*}
	\max_{\ga\in I_k}\{H_+(n,\ga)\} &= H_+(n,\ga_k), & \min_{\ga\in I_k}\{H_-(n,\ga)\} &= H_-(n,\ga_{k+1}).
\end{align*}
It may then be checked numerically that
\begin{align*}
	H_-(n,\ga_{k+1}) > H_+(n,\ga_k)
\end{align*}
for each $k\in\{1,\dots,512\}$, where:
\begin{align*}
	\ga_0 &= \max\bigg\{\frac{19}{12},\frac{10+n}{9}\bigg\}, & \ga_k &= \ga_0 + \frac{k-1}{512}\bigg(\frac{11}{6}-\ga_0\bigg).
\end{align*}

\end{document}